\providecommand{\tabularnewline}{\\}
\theoremstyle{plain}
\newtheorem{thm}{\protect\theoremname}
\theoremstyle{definition}
\newtheorem{defn}[thm]{\protect\definitionname}
\theoremstyle{plain}
\newtheorem{lem}[thm]{\protect\lemmaname}
\theoremstyle{definition}
\newtheorem{example}[thm]{\protect\examplename}
\theoremstyle{plain}
\newtheorem{cor}[thm]{\protect\corollaryname}
\theoremstyle{plain}
\newtheorem{prop}[thm]{\protect\propositionname}
\DeclareFontFamily{OT1}{pzc}{}
\DeclareFontShape{OT1}{pzc}{m}{it}%
             {<-> s * [1.195] pzcmi7t}{}
\DeclareMathAlphabet{\mathscr}{OT1}{pzc}%
                                 {m}{it}
\providecommand{\corollaryname}{Corollary}
\providecommand{\definitionname}{Definition}
\providecommand{\examplename}{Example}
\providecommand{\lemmaname}{Lemma}
\providecommand{\propositionname}{Proposition}
\providecommand{\theoremname}{Theorem}
\begin{document}
\subjclass[2010]{14P05; 14L30; 14R04; 14R20; 14E05; 14J26}

\keywords{Circle actions; torus actions; real algebraic varieties, real algebraic
model; birational diffeomorphism. }

\thanks{This work received support from the French \textquotedbl Investissements
d\textquoteright Avenir\textquotedbl{} program, project ISITE-BFC
(contract ANR-lS-IDEX-OOOB) and from ANR Project FIBALGA ANR-18-CE40-0003-01. }

\author{Adrien Dubouloz}

\address{IMB UMR5584, CNRS, Univ. Bourgogne Franche-Comté, F-21000 Dijon,
France.}

\email{adrien.dubouloz@u-bourgogne.fr}

\author{Charlie Petitjean }

\address{I.U.T. Dijon-Département GMP, Boulevard Dr. Petitjean, 21078 Dijon,
France. }

\email{charlie.petitjean@u-bourgogne.fr}

\dedicatory{Dedicated to Lucy Moser-Jauslin on her 60th birthday}

\title{Rational real algebraic models of compact differential surfaces with
circle actions}
\begin{abstract}
We give an algebro-geometric classification of smooth real affine
algebraic surfaces endowed with an effective action of the real algebraic
circle group $\mathbb{S}^{1}$ up to equivariant isomorphisms. As
an application, we show that every compact differentiable surface
endowed with an action of the circle $S^{1}$ admits a unique smooth
rational real quasi-projective model up to $\mathbb{S}^{1}$-equivariant
birational diffeomorphism. 
\end{abstract}

\maketitle

\section*{Introduction}

A description of normal complex affine surfaces admitting an effective
action of the complex multiplicative group $\mathbb{G}_{m,\mathbb{C}}=(\mathbb{C}^{*},\times)$
was given by Flenner and Zaidenberg \cite{FZ03} in terms of their
graded coordinate rings. Generalizing earlier constructions due to
Dolgachev-Pinkham-Demazure \cite{Do75,Pi77,De88}, they described
these graded rings as rings of sections of divisors with rational
coefficients on suitable one-dimensional rational quotients of the
given action. This type of presentation, which is nowadays called
the DPD-\emph{presentation} of normal affine surfaces with $\mathbb{G}_{m,\mathbb{C}}$-actions,
was generalized vastly by Altmann and Hausen \cite{AH} to give presentations
of normal complex affine varieties of any dimension endowed with effective
actions of tori $\mathbb{G}_{m,\mathbb{C}}^{r}$ in terms of so-called
polyhedral Weil divisors on suitable rational quotients obtained as
limits of GIT quotients.

For normal affine varieties over arbitrary base fields $k$ endowed
with effective actions of non necessarily split tori, that is, commutative
$k$-groups schemes $G$ whose base extension to a separable closure
$k^{s}$ of $k$ are isomorphic to split tori $\mathbb{G}_{m,,k^{s}}^{r}$,
only partial extensions of the Altmann-Hausen formalism are known
so far. Besides the toric case considered by several authors, a first
step was made by Langlois \cite{La11} who obtained a generalization
to affine varieties endowed with effective actions of complexity one.
In another direction, Liendo and the first author \cite{DuLi18} recently
extended the Altmann-Hausen framework to describe normal real affine
varieties endowed with an effective action of the $1$-dimensional
non-split real torus, the circle 
\[
\mathbb{S}^{1}=\mathrm{Spec}(\mathbb{R}[u,v]/(u^{2}+v^{2}-1))\cong\mathrm{SO}_{2}(\mathbb{R}).
\]
The common approach in these generalizations is based on the understanding
of the interplay between the algebro-combinatorial data in an Altmann-Hausen
presentation of the variety with split torus action obtained by base
extension to a separable closure $k^{s}$ of the base field $k$ and
Galois descent with respect to the Galois group $\mathrm{Gal}(k^{s}/k)$.
In the real case, this amounts to describe normal real affine varieties
as normal complex affine varieties endowed with an anti-regular involution
$\sigma$, called a \emph{real structure. }The results in \cite{DuLi18}
then essentially consist of a description of $\mathbb{S}^{1}$-actions
on normal real affine varieties in the language of \cite{AH} extended
to complex affine varieties with real structures. 

The goal of this article is to give a survey of this description and
some applications in the special case of normal real affine surfaces,
formulated in the language of DPD-presentations of Flenner and Zaidenberg.
The first main result, Theorem \ref{thm:MainThm}, describes the one-to-one
correspondence between normal real affine surfaces with effective
$\mathbb{S}^{1}$-actions and certain pairs $(D,h)$ called \emph{real}
DPD-\emph{pairs} on smooth real affine curves $C$, consisting of
a Weil $\mathbb{Q}$-divisor $D$ on the complexification of $C$
and rational function $h$ on $C$. We also characterize which such
pairs correspond to smooth real affine surfaces. A second main result,
Theorem \ref{prop:Real-Except-Orb-1}, consists of a classification
of real $\mathbb{S}^{1}$-orbits on a smooth real affine surface in
relation to the structure of the real fibers of the quotient morphism
for the $\mathbb{S}^{1}$-action. 

To give an illustration of the flavor of these results, consider the
smooth complex affine surface 
\[
S=\{xy^{2}=1-z^{2}\}\subset\mathrm{Spec}(\mathbb{C}[x^{\pm1},y,z]).
\]
The group $\mathbb{G}_{m,\mathbb{C}}$ acts effectively on $S$ by
$t\cdot(x,y,z)=(t^{2}x,t^{-1}y,z)$ and the categorical quotient for
this action is the projection $\pi=\mathrm{pr}_{z}:S\rightarrow\mathbb{A}_{\mathbb{C}}^{1}=\mathrm{Spec}(\mathbb{C}[z])$.
All fibers of $\pi$ consist of a unique $\mathbb{G}_{m,\mathbb{C}}$-orbit,
isomorphic to $\mathbb{G}_{m,\mathbb{C}}$ acting on itself by translations,
except for $\pi^{-1}(-1)$ and $\pi^{-1}(1)$ which are isomorphic
to $\mathbb{G}_{m,\mathbb{C}}$ on which $\mathbb{G}_{m,\mathbb{C}}$
acts with stabilizer equal to the group $\mu_{2}$ of complex square
roots of unity. The composition of the involution $t\mapsto t^{-1}$
of $\mathbb{G}_{m,\mathbb{C}}$ with the complex conjugation defines
a real structure $\sigma_{0}$ on $\mathbb{G}_{m,\mathbb{C}}$ for
which the pair $(\mathbb{G}_{m,\mathbb{C}},\sigma_{0})$ describes
a real algebraic group isomorphic to $\mathbb{S}^{1}$. The composition
of the involution $(x,y,z)\mapsto(x^{-1},xy,z)$ of $S$ with the
complex conjugation defines a real structure $\sigma$ on $S$, making
the pair $(S,\sigma)$ into a smooth real affine surface. The $\mathbb{G}_{m,\mathbb{C}}$-action
on $S$ is compatible with these two real structures and defines a
real action of $\mathbb{S}^{1}$ on the real affine surface $(S,\sigma)$.
The quotient morphism $\pi$ can in turn be interpreted as a real
morphism $\pi:(S,\sigma)\rightarrow\mathbb{A}_{\mathbb{R}}^{1}=\mathrm{Spec}(\mathbb{R}[z])$
which is the categorical quotient of $(S,\sigma)$ for the $\mathbb{S}^{1}$-action
in the category of real algebraic varieties. A real DPD-pair $(D,h)$
describing the $\mathbb{S}^{1}$-action on $(S,\sigma)$ is then given
for instance by the Weil $\mathbb{Q}$-divisor $D=\frac{1}{2}\{-1\}+\frac{1}{2}\{1\}$
on $\mathbb{A}_{\mathbb{C}}^{1}$ and the rational function $h=1-z^{2}$
on $\mathbb{A}_{\mathbb{R}}^{1}$. The fibers of $\pi:(S,\sigma)\rightarrow\mathbb{A}_{\mathbb{R}}^{1}$
over real points $c$ of $\mathbb{A}_{\mathbb{R}}^{1}$ all consist
of a single $\mathbb{S}^{1}$-orbit which is isomorphic to $\mathbb{S}^{1}$-acting
on itself by translations if $c\in]-1,1[$, to $\mathbb{S}^{1}$ acting
on itself with stabilizer $\mu_{2}$ if $c\in\{-1,1\}$, and to the
real affine curve without real point $\{u^{2}+v^{2}=-1\}$ otherwise.
\\

The set of real points of the above surface $(S,\sigma)$ endowed
with the induced Euclidean topology is diffeomorphic to the Klein
bottle $K$ (see subsection \ref{subsec:KleinB-model}). Furthermore,
the $\mathbb{S}^{1}$-action on $(S,\sigma)$ induces a differentiable
action of the real circle $S^{1}$ on $K$ which coincides with the
standard $S^{1}$-action on $K$ viewed as the $S^{1}$-equivariant
connected sum $\mathbb{RP}^{2}\sharp_{S^{1}}\mathbb{RP}^{2}$ of two
copies of the projective plane $\mathbb{RP}^{2}$. In other words,
$(S,\sigma)$ endowed with its $\mathbb{S}^{1}$-action is an equivariant
\emph{real affine algebraic model} of the Klein bottle $K$ endowed
with its differentiable $S^{1}$-action. By a result of Comessatti
\cite{Co14}, a compact connected differential manifold of dimension
2 without boundary admits a projective rational real algebraic model
if and only if it is non-orientable or diffeomorphic to the sphere
$S^{2}$ or the torus $T=S^{1}\times S^{1}$. It was established later
on by Biswas and Huisman \cite{BiHu07} that such a projective model
is unique up to so-called \emph{birational diffeomorphism}s, that
is, diffeomorphisms induced by birational maps defined at every real
point and admitting inverses of the same type. As an application of
the real DPD-presentation formalism, we establish the following uniqueness
property of rational models of compact differentiable surfaces with
$S^{1}$ -actions among all smooth rational quasi-projective real
algebraic surfaces with $\mathbb{S}^{1}$-actions. 
\begin{thm}
\label{thm:MainThm-Model} A connected compact real differential manifold
of dimension $2$ without boundary endowed with an effective differentiable
$S^{1}$-action admits a smooth rational quasi-projective real algebraic
model with $\mathbb{S}^{1}$-action, unique up to $\mathbb{S}^{1}$-equivariant
birational diffeomorphism.
\end{thm}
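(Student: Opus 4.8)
The plan is to treat existence and uniqueness separately, matching the algebraic classification of $\mathbb{S}^{1}$-surfaces from Theorem \ref{thm:MainThm} and the orbit description of Theorem \ref{prop:Real-Except-Orb-1} against the classical differentiable classification of circle actions on closed surfaces. Recall that a connected closed surface carries an effective differentiable $S^{1}$-action if and only if it is one of $S^{2}$, the torus $T=S^{1}\times S^{1}$, the projective plane $\mathbb{RP}^{2}$, or the Klein bottle $K$, and that in each case the action is determined up to equivariant diffeomorphism by its orbit space (a compact interval or a circle) together with the boundary data, namely the fixed points and the exceptional orbits with stabilizer $\mu_{2}$ sitting over the boundary. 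The first step is to record this finite list of orbit invariants, so that realizing a given $M$ by an algebraic model reduces to producing a model with the prescribed boundary data.

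For existence I would construct, for each diffeomorphism type, an explicit smooth real affine surface with $\mathbb{S}^{1}$-action by exhibiting a real DPD-pair $(D,h)$ on $C=\mathbb{A}^{1}_{\mathbb{R}}$ (or on a rational curve with circle real locus when the orbit space is a circle), modeled on the surface $S=\{xy^{2}=1-z^{2}\}$ of the introduction, which already realizes $K$. The function $h$ is chosen so that its sign on $C(\mathbb{R})$ cuts out precisely the orbit space of $M$: by Theorem \ref{prop:Real-Except-Orb-1} the real fiber of the quotient over a real point $c$ is a circle orbit where $h(c)>0$ and the pointless conic $\{u^{2}+v^{2}=-1\}$ where $h(c)<0$. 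The support of $D$ and the order of vanishing of $h$ at the boundary points are then prescribed so that the boundary orbits are fixed points or $\mu_{2}$-exceptional orbits as required, and smoothness of the resulting surface is read off from the criterion of Theorem \ref{thm:MainThm}. Since the real locus and $M$ then carry identical orbit data, they are $S^{1}$-equivariantly diffeomorphic.

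For uniqueness let $(X_{1},\sigma_{1})$ and $(X_{2},\sigma_{2})$ be two smooth rational quasi-projective models of the same $M$. I would first pass to the categorical quotient morphisms $\pi_{i}\colon X_{i}\to C_{i}$, where each $C_{i}$ is a smooth rational real curve with circle real locus inside which the image of the compact set $X_{i}(\mathbb{R})$ is a closed arc (or the whole circle) homeomorphic to the orbit space $M/S^{1}$. After an $\mathbb{S}^{1}$-equivariant birational modification removing a divisor disjoint from the compact real locus, one may assume each $X_{i}$ is the affine surface attached to a real DPD-pair $(D_{i},h_{i})$. Because rational real curves with homeomorphic real loci are birationally diffeomorphic (the curve analogue of the Biswas--Huisman theorem), I may choose a birational diffeomorphism $C_{1}\dashrightarrow C_{2}$ identifying the two orbit spaces and their boundary points, and pull back to reduce to $C_{1}=C_{2}=:C$ with $(D_{1},h_{1})$ and $(D_{2},h_{2})$ sharing the same sign pattern of $h_{i}$ on $C(\mathbb{R})$ and the same boundary types. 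It then remains to produce an $\mathbb{S}^{1}$-equivariant birational map $X_{1}\dashrightarrow X_{2}$ over $C$ inducing a diffeomorphism on real loci.

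Here I would use that two DPD-pairs sharing these real invariants differ only by data absorbable by an equivariant birational diffeomorphism: replacing $h_{1}$ by $h_{1}g^{2}$ for a rational function $g$ with no zero or pole on $C(\mathbb{R})$ leaves the sign pattern, hence every real fiber, unchanged and corresponds to an $\mathbb{S}^{1}$-equivariant birational self-map that is an isomorphism over $C(\mathbb{R})$; likewise the parts of $D_{1}$ and $D_{2}$ supported at non-real points of $C_{\mathbb{C}}$, and the fractional contributions away from the boundary, can be matched by equivariant modifications that are isomorphisms over $C(\mathbb{R})$. Composing these with the lift of $C_{1}\dashrightarrow C_{2}$ yields the desired map. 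The hard part, and the step demanding the most care, is to check that the constructed map and its inverse are defined at \emph{every} real point and restrict there to genuine diffeomorphisms, precisely at the special orbits over the boundary of the orbit space and along the empty real fibers over $\{h_{i}<0\}$; this is exactly where the fine description of real orbits through the fibers of the quotient in Theorem \ref{prop:Real-Except-Orb-1} is needed to exclude the creation or destruction of real points.
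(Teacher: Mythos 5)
Your outline mirrors the paper's strategy---explicit DPD-models for existence, then reduction of an arbitrary model to these via the moves of Theorem \ref{thm:MainThm} 1) and the real-fiber analysis of Theorem \ref{prop:Real-Except-Orb-1}---but the two steps you compress into single sentences are precisely where the real work lies, and as written they are genuine gaps. First, the passage from a quasi-projective model to an affine one: you say that ``after an $\mathbb{S}^{1}$-equivariant birational modification removing a divisor disjoint from the compact real locus'' one may assume each $X_{i}$ is given by a real DPD-pair. Deleting a divisor from a quasi-projective surface does not in general produce an affine surface (think of $\mathbb{P}_{\mathbb{C}}^{2}$ minus a point), and the existence of a suitable divisor to delete is not automatic. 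The paper's Lemma \ref{lem:Quas-proj-2-affine} requires: an equivariant smooth projective completion $(X,\Sigma)\hookrightarrow(\overline{X},\overline{\Sigma})$ (Sumihiro's theorem plus equivariant desingularization); Comessatti's theorem to see that the real locus of $(\overline{X},\overline{\Sigma})$ is connected, hence equals the compact, open and closed subset $X(\mathbb{C})^{\Sigma}$, so that the completion adds no real points---this is where rationality enters; and Sumihiro's linearization to embed $\overline{X}$ equivariantly into a projective space carrying an $\mathbb{S}^{1}$-invariant real quadric $\sum x_{i}y_{i}+\sum z_{j}^{2}=0$ with empty real locus, whose complement in $\overline{X}$ is the desired affine model. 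None of these ingredients appears in your sketch, and without them the reduction does not go through.

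Second, you explicitly defer ``the hard part,'' namely verifying that your matching of $(D_{1},h_{1})$ with $(D_{2},h_{2})$ is defined at every real point and is a diffeomorphism there. The paper never has to perform such a pointwise check, because it organizes the reduction so that every elementary move is automatically a birational diffeomorphism: either an $\mathbb{S}^{1}$-equivariant isomorphism coming from $(D,h)\mapsto(D+\mathrm{div}(f),(f\tau^{*}f)h)$ or from a real automorphism of the base, or an open immersion whose complement consists of fibers over non-real points of $C$ (hence contains no real points). The fiber analysis of Theorem \ref{prop:Real-Except-Orb-1} is then used not to check where a map is defined, but to pin down a normal form: nonempty real fibers over $J$ force $h\geq0$ there, empty real fibers outside $J$ force $h<0$ there, and regularity then gives $h=1-z^{2}$ and $D(\pm1)\in\{0,\tfrac{1}{2}\}$ up to the allowed moves, landing each model on exactly one row of Table \ref{RatMods}. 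Uniqueness follows because the four table models have pairwise non-diffeomorphic real loci; this canonical-form argument also removes the need for your ``curve analogue of Biswas--Huisman'' and for any direct comparison of two arbitrary pairs. Until these two steps are supplied, your proposal is an outline of the paper's proof rather than a proof.
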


The article is organized as follows. In the first section we review
the equivalence of categories between quasi-projective real varieties
and quasi-projective complex varieties equipped with real structures
and recall the interpretation of $\mathbb{S}^{1}$-actions on such
varieties as forms of $\mathbb{G}_{m,\mathbb{C}}$-actions on complex
varieties with real structures. We also describe a correspondence
between $\mathbb{S}^{1}$-torsors and certain pairs consisting of
a invertible sheaf and a rational function on their base space which
to our knowledge did not appear before in the literature. The second
section is devoted to the presentation of smooth real affine surfaces
with $\mathbb{S}^{1}$-action in terms of real DPD-pairs and to the
study of their real $\mathbb{S}^{1}$-orbits. In the third section,
we first present different constructions of rational projective and
affine real algebraic models of compact differential surfaces with
$S^{1}$-actions and then proceed to the proof of Theorem \ref{thm:MainThm-Model}. 

\newpage

\section{Preliminaries}

In what follows the term algebraic variety over field $k$ refers
to a geometrically integral scheme $X$ of finite type over $k$.
In the sequel, $k$ will be equal to either $\mathbb{R}$ or $\mathbb{C}$,
and we will say that $X$ is a real, respectively complex, algebraic
variety. 

\subsection{Real and complex quasi-projective algebraic varieties}

Every complex algebraic variety $V$ can be viewed as a scheme over
the field $\mathbb{R}$ of real numbers via the composition of its
structure morphism $p:V\rightarrow\mathrm{Spec}(\mathbb{C})$ with
the \'etale double cover $\mathrm{Spec}(\mathbb{C})\rightarrow\mathrm{Spec}(\mathbb{R})$
induced by the inclusion $\mathbb{R}\hookrightarrow\mathbb{C}=\mathbb{R}[i]/(i^{2}+1)$.
The Galois group $\mathrm{Gal}(\mathbb{C}/\mathbb{R})=\mu_{2}$ acts
on $\mathrm{Spec}(\mathbb{C})$ by the usual complex conjugation $z\mapsto\overline{z}$. 
\begin{defn}
A \emph{real structure} on a complex algebraic variety $V$ consists
of an involution of $\mathbb{R}$-scheme $\sigma$ of $V$ which lifts
the complex conjugation, so that we have a commutative diagram \[\xymatrix{V \ar[rr]^{\sigma} \ar[d]_{p} && V \ar[d]^{p} \\ \mathrm{Spec}(\mathbb{C}) \ar[rr]^{z\mapsto\overline{z}} \ar[dr] &&  \mathrm{Spec}(\mathbb{C}) \ar[dl] \\ & \mathrm{Spec}(\mathbb{R}). }\] 
When $V=\mathrm{Spec}(A)$ is affine, a real structure $\sigma$ is
equivalently determined by its comomorphism $\sigma^{*}:A\rightarrow A$,
which is an involution of $A$ viewed as an $\mathbb{R}$-algebra. 

A \emph{real morphism (resp. real rational map)} between complex algebraic
varieties with real structures $(V',\sigma')$ and $(V,\sigma)$ is
a morphism (resp. a rational map) of complex algebraic varieties $f:V'\rightarrow V$
such that $\sigma\circ f=f\circ\sigma'$ as morphisms of $\mathbb{R}$-schemes. 
\end{defn}

For every real algebraic variety $X$, the \emph{complexification}
\[
X_{\mathbb{C}}=X\times_{\mathbb{R}}\mathbb{C}:=X\times_{\mathrm{Spec}(\mathbb{R})}\mathrm{Spec}(\mathbb{C})
\]
of $X$ comes equipped with a canonical real structure $\sigma_{X}$
given by the action of $\mathrm{Gal}(\mathbb{C}/\mathbb{R})$ by complex
conjugation on the second factor. Conversely, if a complex variety
$p:V\rightarrow\mathrm{Spec}(\mathbb{C})$ is equipped with a real
structure $\sigma$ and covered by $\sigma$-invariant affine open
subsets -so for instance if $V$ is quasi-projective-, then the quotient
$\pi:V\rightarrow V/\langle\sigma\rangle$ exists in the category
of schemes and the structure morphism $p:V\rightarrow\mathrm{Spec}(\mathbb{C})$
descends to a morphism $V/\langle\sigma\rangle\rightarrow\mathrm{Spec}(\mathbb{R})=\mathrm{Spec}(\mathbb{C})/\langle z\mapsto\overline{z}\rangle$
making $V/\langle\sigma\rangle$ into a real algebraic variety $X$
such that $V\cong X_{\mathbb{C}}$. In the case where $V=\mathrm{Spec}(A)$
is affine, the algebraic variety $X=V/\langle\sigma\rangle$ is affine,
equal to the spectrum of the ring $A^{\sigma^{*}}$ of $\sigma^{*}$-invariant
elements of $A$. This correspondence extends to a well-known equivalence
of categories:
\begin{lem}
\label{lem:Real-structures}\cite{BS64} The functor $X\mapsto(X_{\mathbb{C}},\sigma_{X})$
is an equivalence between the category of quasi-projective real algebraic
varieties and the category of pairs $(V,\sigma)$ consisting of a
complex quasi-projective variety $V$ endowed with a real structure
$\sigma$. 
\end{lem}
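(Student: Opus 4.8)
The plan is to produce an explicit quasi-inverse and then to check the two conditions defining an equivalence of categories, essential surjectivity and full faithfulness, reducing both to the affine case where the assertion is Galois descent for the quadratic extension $\mathbb{C}/\mathbb{R}$. The candidate quasi-inverse sends a pair $(V,\sigma)$ to the quotient real variety $V/\langle\sigma\rangle$ constructed in the paragraph preceding the statement, and a real morphism to the induced morphism of quotients; in the other direction a morphism $f\colon X'\to X$ of real varieties is sent to its base extension $f_{\mathbb{C}}=f\times_{\mathbb{R}}\mathrm{id}_{\mathbb{C}}$, which commutes with the canonical real structures $\sigma_{X'}$ and $\sigma_X$ since these act only through the factor $\mathrm{Spec}(\mathbb{C})$.

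For essential surjectivity I would first treat the affine case $V=\mathrm{Spec}(A)$. Writing $\sigma^{*}\colon A\to A$ for the conjugate-linear ring involution comorphism of $\sigma$ and $B=A^{\sigma^{*}}$ for its ring of invariants, the key point is the decomposition $A=B\oplus iB$ of $\mathbb{R}$-modules: each $a\in A$ is written uniquely as $a=b_{1}+ib_{2}$ with $b_{1}=\frac{1}{2}(a+\sigma^{*}(a))$ and $b_{2}=\frac{1}{2i}(a-\sigma^{*}(a))$ lying in $B$, using that $\sigma^{*}$ is conjugate-linear and squares to the identity. This gives a canonical isomorphism $A\cong B\otimes_{\mathbb{R}}\mathbb{C}$ carrying $\sigma^{*}$ to $\mathrm{id}_{B}\otimes(z\mapsto\overline{z})$, so that $(\mathrm{Spec}(A),\sigma)\cong(X_{\mathbb{C}},\sigma_{X})$ for $X=\mathrm{Spec}(B)$. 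Finite generation of $B$ over $\mathbb{R}$ follows from Noether's finiteness theorem for the order-two group $\langle\sigma^{*}\rangle$ acting by $\mathbb{R}$-algebra automorphisms on the finitely generated $\mathbb{R}$-algebra $A$, while geometric integrality of $X$ is inherited from integrality of $X_{\mathbb{C}}=V$.

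Full faithfulness reduces similarly to the affine case: a real morphism $g\colon(\mathrm{Spec}(A'),\sigma')\to(\mathrm{Spec}(A),\sigma)$ is a $\mathbb{C}$-algebra homomorphism $g^{*}\colon A\to A'$ satisfying $g^{*}\circ\sigma^{*}=(\sigma')^{*}\circ g^{*}$, which therefore carries $B$ into $B'$ and is recovered from its restriction $g^{*}|_{B}\colon B\to B'$ through the decomposition above; this yields the bijection between morphisms $X'\to X$ of real varieties and real morphisms $X'_{\mathbb{C}}\to X_{\mathbb{C}}$. To pass from the affine to the quasi-projective case I would glue: every point $x\in V$ admits a $\sigma$-invariant affine open neighborhood, obtained by choosing an affine open $U$ containing the finite set $\{x,\sigma(x)\}$, which exists because $V$ is quasi-projective, and replacing it by $U\cap\sigma(U)$, which is again affine since $V$, being quasi-projective, is separated. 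These invariant charts descend by the affine case and glue to a real variety $X$ with $X_{\mathbb{C}}\cong V$, which is moreover quasi-projective because a $\sigma$-invariant ample sheaf on $V$ descends to an ample sheaf on $X$; morphisms glue compatibly, and the natural isomorphisms $X\cong(X_{\mathbb{C}})/\langle\sigma_{X}\rangle$ and $(V/\langle\sigma\rangle)_{\mathbb{C}}\cong V$ witness the equivalence.

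The step demanding the most care, and the reason the statement is restricted to quasi-projective varieties, is the construction of the cover of $V$ by $\sigma$-invariant affine opens underlying the existence of the quotient $V/\langle\sigma\rangle$ as a scheme: this reduction uses both that finitely many points of a quasi-projective variety share a common affine neighborhood and that separatedness forces $U\cap\sigma(U)$ to be affine. Once such an invariant cover is available, the compatibility of the gluing data, the functoriality, and the verification of the unit and counit isomorphisms are formal.
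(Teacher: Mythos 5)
Your proof is correct and follows essentially the same route as the paper, which does not prove this lemma itself but cites Borel--Serre \cite{BS64} and sketches, in the paragraph preceding the statement, exactly the quasi-inverse you construct: the quotient $V/\langle\sigma\rangle$, given in the affine case by the invariant ring $A^{\sigma^{*}}$ (your decomposition $A=B\oplus iB$ is the verification that this works), and obtained in general by gluing over a cover by $\sigma$-invariant affine opens of the form $U\cap\sigma(U)$. The only compressed point is the descent of quasi-projectivity: mere $\sigma$-invariance of an ample sheaf is not a descent datum, so one should instead take $L\otimes\sigma^{*}L$ equipped with the flip isomorphism, which does satisfy the cocycle condition, descend it, and conclude ampleness of the descended sheaf from ampleness of its pullback along the finite surjective morphism $V\rightarrow X$ --- a standard fix that does not affect the structure of your argument.
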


In what follows, we will switch freely from one point of view to the
other, by viewing a quasi-projective real algebraic variety $X$ either
as a geometrically integral $\mathbb{R}$-scheme of finite type or
as a pair $(V,\sigma)$ consisting of a quasi-projective complex algebraic
variety $V$ endowed with a real structure $\sigma$. A \emph{real
form} of a given real algebraic variety $(V,\sigma)$ is a real algebraic
variety $(V',\sigma')$ such that the complex varieties $V$ and $V'$
are isomorphic. A \emph{real closed subscheme} $Z$ of a real algebraic
variety $(V,\sigma)$ is a $\sigma$-invariant closed subscheme of
$V$, endowed with the induced real structure $\sigma|_{Z}$. \\

The set $V(\mathbb{C})$ of complex points of a smooth complex algebraic
variety $V$ can be endowed with a natural structure of real smooth
manifold locally inherited from that on $\mathbb{A}_{\mathbb{C}}^{n}(\mathbb{C})\simeq\mathbb{C}^{n}\simeq\mathbb{R}^{2n}$
\cite[Lemme 1 and Proposition 2]{Se55}. Every morphism of smooth
complex algebraic varieties $f:V'\rightarrow V$ induces a differentiable
map $f(\mathbb{C}):V(\mathbb{C})\rightarrow V(\mathbb{C})$ which
is a diffeomorphism when $f$ is an isomorphism. Similarly, a real
structure $\sigma$ on $V$ induces a differentiable involution of
$V(\mathbb{C})$, whose set of fixed points $V(\mathbb{C})^{\sigma}$,
called the \emph{real locus} of $(V,\sigma)$, is a smooth differential
real manifold. The real algebraic variety $(V,\sigma)$ is then said
to be an \emph{algebraic model} of this differential manifold.
\begin{defn}
A\emph{ birational diffeomorphism} $\varphi:(V',\sigma')\dashrightarrow(V,\sigma)$
between smooth real algebraic varieties with non empty real loci is
a real birational map whose restriction to the real locus $V'(\mathbb{C})^{\sigma'}$
of $(V',\sigma')$ is a diffeomorphism onto the real locus $V(\mathbb{C})^{\sigma}$
of $(V,\sigma)$, and which admits a rational inverse of the same
type. 
\end{defn}

\begin{example}
\label{exa:StereographicProj}Let $(Q_{1,\mathbb{C}},\sigma_{Q_{1}})$
be the complexification of the smooth affine curve $Q_{1}$ in $\mathbb{A}_{\mathbb{R}}^{2}=\mathrm{Spec}(\mathbb{R}[u,v])$
defined by the equation $u^{2}+v^{2}=1$. The stereographic projection
from the real point $N=(0,1)$ of $(Q_{1,\mathbb{C}},\sigma_{Q_{1}})$
induces an everywhere defined birational diffeomorphism 
\[
\pi_{N}:(Q_{1,\mathbb{C}}\setminus\{N\},\sigma_{Q_{1}}|_{Q_{1,\mathbb{C}}\setminus\{N\}})\rightarrow(\mathbb{A}_{\mathbb{C}}^{1}=\mathrm{Spec}(\mathbb{C}[z]),\sigma_{\mathbb{A}_{\mathbb{R}}^{1}}),\quad(u,v)\mapsto\frac{u}{1-v}
\]
with image equal to $\mathbb{A}_{\mathbb{C}}^{1}\setminus\{\pm i\}$.
Its inverse is given by 
\[
z\mapsto(u,v)=(\frac{2z}{z^{2}+1},\frac{z^{2}-1}{z^{2}+1}).
\]
\end{example}

\subsection{Circle actions as real forms of hyperbolic $\mathbb{G}_{m}$-actions}
\begin{defn}
\label{def:S1-action}The \emph{circle} $\mathbb{S}^{1}$ is the nontrivial
real form $(\mathbb{G}_{m,\mathbb{C}},\sigma_{0})$ of $(\mathbb{G}_{m,\mathbb{C}},\sigma_{\mathbb{G}_{m,\mathbb{R}}})$
whose real structure is the composition of the involution $t\mapsto t^{-1}$
of $\mathbb{G}_{m,\mathbb{C}}=\mathrm{Spec}(\mathbb{C}[t^{\pm1}])$
with the complex conjugation. It is a real algebraic group isomorphic
to the group 

\[
SO_{2}\left(\mathbb{R}\right)=\mathrm{Spec}(\mathbb{C}[t^{\pm1}]^{\sigma_{0}^{*}})\cong\mathrm{Spec}(\mathbb{R}[u,v]/(u^{2}+v^{2}-1)),
\]
with group law given by $\left(u,v\right)\cdot(u',v')=(uu'-vv',uv'+u'v)$. 

An action of $\mathbb{S}^{1}$ on a real algebraic variety $(V,\sigma)$
is a real action of $(\mathbb{G}_{m,\mathbb{C}},\sigma_{0})$ on $(V,\sigma)$,
that is, an action $\mu:\mathbb{G}_{m,\mathbb{C}}\times V\rightarrow V$
of $\mathbb{G}_{m,\mathbb{C}}$ on $V$ for which the following diagram
commutes \[\xymatrix{\mathbb{G}_{m,\mathbb{C}}\times V \ar[d]_{\sigma_0\times \sigma} \ar[r]^-{\mu} & V \ar[d]^{\sigma} \\ \mathbb{G}_{m,\mathbb{C}}\times V \ar[r]^-{\mu} & V.}\] 

Let $\pi:(V,\sigma)\rightarrow(C,\tau)$ be an affine morphism between
real algebraic varieties and let $\mu:\mathbb{G}_{m,\mathbb{C}}\times V\rightarrow V$
be an $\mathbb{S}^{1}$-action on $(V,\sigma)$ by real $(C,\tau)$-automorphisms.
Putting $\mathcal{A}=\pi_{*}\mathcal{O}_{V}$, $\mu$ is uniquely
determined by its associated $\mathcal{O}_{C}$-algebra co-action
homomorphism 
\[
\mu^{*}:\mathcal{A}\rightarrow\mathcal{A}\otimes_{\mathcal{O}_{C}}\mathcal{O}_{C}[t^{\pm1}].
\]
The latter determines a $\mathbb{Z}$-grading of $\mathcal{A}$ by
its $\mathcal{O}_{C}$-submodules 
\[
\mathcal{A}_{m}=\left\{ f\in\mathcal{A},\,\mu^{*}f=f\otimes t^{m}\right\} ,\;m\in\mathbb{Z},
\]
of semi-invariants germs of sections of weight $m$. 

The action $\mu$ is said to be \emph{effective} if the set $\{m\in\mathbb{Z},\,\mathcal{A}_{m}\neq\{0\}\}$
is not contained in a proper subgroup of $\mathbb{Z}$, and \emph{hyperbolic}
if there exists $m<0$ and $m'>0$ such that $\mathcal{A}_{m}$ and
$\mathcal{A}_{m'}$ are non zero. The following lemma is an extension
in the relative affine setting of \cite[Lemma 1.7]{DuLi18}. 
\end{defn}

\begin{lem}
\label{lem:Hyperbolic-decomposition} Let $\pi:(V,\sigma)\rightarrow(C,\tau)$
be an affine morphism between real algebraic varieties and let $\mu\colon\mathbb{G}_{m,\mathbb{C}}\times V\rightarrow V$
be an effective $\mathbb{S}^{1}$-action on $(V,\sigma)$ by $(C,\tau)$-automorphisms.
Let $\mathcal{A}=\bigoplus_{m\in\mathbb{Z}}\mathcal{A}_{m}$ be the
corresponding decomposition of the quasi-coherent $\mathcal{O}_{C}$-algebra
$\mathcal{A}=\pi_{*}\mathcal{O}_{V}$ into semi-invariants $\mathcal{O}_{C}$-submodules.
Then the following hold:

1) The action $\mu$ is hyperbolic and $\sigma^{*}\mathcal{A}_{m}=\tau_{*}\mathcal{A}_{-m}$
for every $m\in\mathbb{Z}$.

2) The $\mathcal{O}_{C}$-module $\mathcal{A}_{0}$ is a quasi-coherent
$\mathcal{O}_{C}$-subalgebra of finite type of $\mathcal{A}$. Furthermore,
the restriction $\sigma^{*}:\mathcal{A}_{0}\rightarrow\tau_{*}\mathcal{A}_{0}$
of $\sigma^{*}$ is the comorphism of a real structure $\tau_{0}$
on $\mathrm{Spec}_{C}(\mathcal{A}_{0})$ for which the induced morphism
$\pi_{0}:(\mathrm{Spec}_{C}(\mathcal{A}_{0}),\tau_{0})\rightarrow(C,\tau)$
is a real morphism. 
\end{lem}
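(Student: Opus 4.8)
The plan is to derive everything from the single compatibility identity obtained by dualizing the commuting square in Definition \ref{def:S1-action}. First I would translate that square into a relation between comorphisms: since passing to comorphisms reverses arrows, the equality $\sigma\circ\mu=\mu\circ(\sigma_0\times\sigma)$ becomes $\mu^*\circ\sigma^*=(\sigma^*\otimes\sigma_0^*)\circ\mu^*$ as conjugate-linear maps $\mathcal{A}\to\mathcal{A}\otimes_{\mathcal{O}_C}\mathcal{O}_C[t^{\pm1}]$. The only feature of $\sigma_0$ that intervenes is that $\sigma_0^*(t)=t^{-1}$, which encodes the fact that $\mathbb{S}^1$ is the form of $\mathbb{G}_{m,\mathbb{C}}$ twisting inversion into conjugation; this is exactly what will reverse the sign of the weights.

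With this identity at hand, the relation $\sigma^*\mathcal{A}_m=\tau_*\mathcal{A}_{-m}$ of part 1 is a direct computation on homogeneous germs. For $f\in\mathcal{A}_m$ we have $\mu^*f=f\otimes t^m$, so the comorphism identity gives $\mu^*(\sigma^*f)=(\sigma^*\otimes\sigma_0^*)(f\otimes t^m)=\sigma^*f\otimes t^{-m}$, exhibiting $\sigma^*f$ as a semi-invariant of weight $-m$. Since $\pi$ is real, $\sigma^*$ carries $\mathcal{A}=\pi_*\mathcal{O}_V$ into $\tau_*\mathcal{A}$, which yields the inclusion $\sigma^*\mathcal{A}_m\subseteq\tau_*\mathcal{A}_{-m}$, and equality follows because $\sigma^*$ is an involution. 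In particular the support $\{m\in\mathbb{Z}:\mathcal{A}_m\neq 0\}$ is symmetric about the origin. Combined with effectiveness, which forces this support to generate $\mathbb{Z}$ and hence to contain a nonzero weight, symmetry produces a strictly positive and a strictly negative weight, so $\mu$ is hyperbolic.

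For part 2, that $\mathcal{A}_0$ is a quasi-coherent subalgebra is immediate: the weight spaces are quasi-coherent $\mathcal{O}_C$-submodules of $\mathcal{A}$, and $\mathcal{A}_0$ is closed under multiplication and contains the unit. The finite type assertion is where the work lies, and I would argue locally on $C$, where quasi-coherence lets me assume $C=\mathrm{Spec}(R)$ and $\mathcal{A}=\widetilde{B}$ for a $\mathbb{Z}$-graded $R$-algebra $B$ of finite type. Choosing homogeneous generators $f_1,\dots,f_n$ of degrees $d_1,\dots,d_n$, the degree-zero subalgebra $B_0$ is generated by the monomials $f_1^{a_1}\cdots f_n^{a_n}$ with $\sum_i a_id_i=0$, that is, by the image of the submonoid $\{a\in\mathbb{N}^n:\sum_i a_id_i=0\}$, which is finitely generated by Gordan's lemma; hence $B_0$ is a finitely generated $R$-algebra, as in \cite[Lemma 1.7]{DuLi18}. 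I expect the main obstacle here to be the bookkeeping needed to check that these local descriptions glue and that finite generation is chart-independent, so that the local conclusions assemble into the global quasi-coherent $\mathcal{O}_C$-algebra of finite type $\mathcal{A}_0$.

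Finally, the real structure $\tau_0$ is supplied by the case $m=0$ of part 1: the identity $\sigma^*\mathcal{A}_0=\tau_*\mathcal{A}_0$ says precisely that $\sigma^*$ restricts to a conjugate-linear involution $\tau_0^*$ of $\mathcal{A}_0$. Since $\sigma$ lifts complex conjugation, so does its restriction, whence $\tau_0$ is a genuine real structure on $\mathrm{Spec}_C(\mathcal{A}_0)$; and because the inclusion $\mathcal{A}_0\hookrightarrow\mathcal{A}$ is $\sigma^*$-equivariant while $\pi$ is real, the induced morphism $\pi_0$ intertwines $\tau_0$ and $\tau$, i.e. is a real morphism.
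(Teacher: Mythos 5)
Your proposal is correct, and for part 1) it is essentially the paper's own argument: the same comorphism identity $\mu^{*}\circ\sigma^{*}=(\sigma^{*}\otimes\sigma_{0}^{*})\circ\mu^{*}$, the same computation $\mu^{*}(\sigma^{*}f)=\sigma^{*}f\otimes t^{-m}$ on homogeneous germs giving $\sigma^{*}\mathcal{A}_{m}\subseteq\tau_{*}\mathcal{A}_{-m}$, equality from the involution property, and hyperbolicity from a nonzero weight plus the symmetry of the support. The same is true of the final step: the paper also obtains $\tau_{0}$ by restricting $\sigma^{*}$ to $\mathcal{A}_{0}$ and declares the verification that $\pi_{0}$ is real a straightforward consequence of the definitions. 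The one place you genuinely diverge is the finite-type assertion for $\mathcal{A}_{0}$: the paper simply cites this as well known, invoking \cite[Theorem 1.1]{GIT}, whereas you give a self-contained proof by localizing to $C=\mathrm{Spec}(R)$, writing $B$ with homogeneous generators $f_{1},\dots,f_{n}$ of degrees $d_{1},\dots,d_{n}$, and applying Gordan's lemma to the monoid $\{a\in\mathbb{N}^{n}:\sum_{i}a_{i}d_{i}=0\}$ to produce finitely many monomial generators of $B_{0}$. That argument is valid (an element of $B_{0}$ is an $R$-combination of degree-zero monomials by uniqueness of homogeneous decompositions, and monoid generators yield algebra generators), and it buys you independence from the GIT reference; it is exactly the kind of elementary torus-invariant computation that underlies the cited result. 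Note also that the gluing issue you flag is not actually an obstacle: $\mathcal{A}_{0}$ is globally defined as the weight-zero subsheaf (a kernel of a morphism of quasi-coherent sheaves, hence quasi-coherent), and being of finite type is by definition a local condition on affine charts, so the local conclusions require no further assembly.
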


\begin{proof}
Since $\pi:(V,\sigma)\rightarrow(C,\tau)$ is an affine morphism,
$\sigma$ is equivalenty determined by its comorphism $\sigma^{*}:\mathcal{A}\rightarrow\tau_{*}\mathcal{A}$.
Since $\mu$ is a non trivial action, there exists a nonzero element
$m\in\mathbb{Z}$ such that $\mathcal{A}_{m}\neq\{0\}$. The commutativity
of the diagram in Definition \ref{def:S1-action} implies that for
a local section $f$ of $\mathcal{A}_{m}$, 
\[
\mu^{*}(\sigma^{*}(f))=(\sigma^{*}\otimes\sigma_{0}^{*})(f\otimes t^{m})=\sigma^{*}(f)\otimes t^{-m}
\]
hence that $\sigma^{*}(f)\in\tau_{*}\mathcal{A}_{-m}$. Thus $\sigma^{*}\mathcal{A}_{m}\subseteq\tau_{*}\mathcal{A}_{-m}$,
and since $(\tau_{*}\sigma^{*})\circ\sigma^{*}=\mathrm{id}_{\mathcal{A}}$,
it follows that the equality $\sigma^{*}\mathcal{A}_{m}=\tau_{*}\mathcal{A}_{-m}$
holds. This shows that the action $\mu$ is hyperbolic and that $\sigma^{*}\mathcal{A}_{0}=\tau_{*}\mathcal{A}_{0}$.
The fact that $\mathcal{A}_{0}$ is a quasi-coherent $\mathcal{O}_{C}$-algebra
of finite type is well-known \cite[Theorem 1.1]{GIT}, and the fact
that $\sigma^{*}|_{\mathcal{A}_{0}}$ is the comomorphism of a real
structure $\tau_{0}$ on $\mathrm{Spec}_{C}(\mathcal{A}_{0})$ making
$\pi_{0}:(\mathrm{Spec}_{C}(\mathcal{A}_{0}),\tau_{0})\rightarrow(C,\tau)$
into a real morphism is a straightforward consequence of the definitions. 
\end{proof}
\begin{defn}
In the setting of Lemma \ref{lem:Hyperbolic-decomposition}, the real
affine morphism $(V,\sigma)\rightarrow(\mathrm{Spec}_{C}(\mathcal{A}_{0}),\tau_{0})$
is called the real (categorical) quotient morphism of the $\mathbb{S}^{1}$-action
on $(V,\sigma)$. 
\end{defn}

\subsection{Principal homogeneous $\mathbb{S}^{1}$-bundles }

Recal that a $\mathbb{G}_{m,\mathbb{C}}$\emph{-torsor} over a complex
algebraic variety $C$ is a $C$-scheme $\rho:P\rightarrow C$ endowed
with an action $\mu:\mathbb{G}_{m,\mathbb{C}}\times P\rightarrow P$
of $\mathbb{G}_{m,\mathbb{C}}$ by $C$-scheme automorphisms, such
that $P$ is Zariski locally isomorphic over $C$ to $C\times\mathbb{G}_{m,\mathbb{C}}$
on which $\mathbb{G}_{m,\mathbb{C}}$ acts by translations on the
second factor. 
\begin{defn}
\label{def:S1-torsor}An $\mathbb{S}^{1}$\emph{-torsor }(also called
a principal homogeneous $\mathbb{S}^{1}$-bundle) over a real algebraic
variety $(C,\tau)$ is a real algebraic variety $\rho:(P,\sigma)\rightarrow(C,\tau)$
endowed with an $\mathbb{S}^{1}$-action $\mu:\mathbb{G}_{m,\mathbb{C}}\times P\rightarrow P$
for which $\rho:P\rightarrow V$ is a $\mathbb{G}_{m,\mathbb{C}}$-torsor. 
\end{defn}

Recall that isomorphism classes of $\mathbb{G}_{m,\mathbb{C}}$-torsors
$\rho:P\rightarrow C$ over $C$ are in one-to-one correspondence
with elements of the Picard group $\mathrm{Pic}(C)\cong H^{1}(C,\mathcal{O}_{C}^{*})$
of $C$. More explicitly, for every such $P$, there exists an invertible
$\mathcal{O}_{C}$-submodule $\mathcal{L}$ of the sheaf of rational
functions $\mathcal{K}_{C}$ of $C$ and an isomorphism of $\mathbb{Z}$-graded
algebras 
\[
\rho_{*}\mathcal{O}_{P}\cong\bigoplus_{m\in\mathbb{Z}}\mathcal{L}^{\otimes m},
\]
where for $m<0$, $\mathcal{L}^{\otimes m}$ denotes the $-m$-th
tensor power of the dual $\mathcal{L}^{\vee}$ of $\mathcal{L}$.
Furthermore, two invertible $\mathcal{O}_{C}$-submodules of $\mathcal{K}_{C}$
determine isomorphic $\mathbb{G}_{m,\mathbb{C}}$-torsors if and only
if they are isomorphic. For $\mathbb{S}^{1}$-torsors, we have the
following counterpart: 
\begin{lem}
\label{lem:S1-torsors} For every $\mathbb{S}^{1}$-torsor $\rho:(P,\sigma)\rightarrow(C,\tau)$
there exists a pair $(\mathcal{L},h)$ consisting of an invertible
$\mathcal{O}_{C}$-submodule $\mathcal{L}\subset\mathcal{K}_{C}$
and a nonzero real rational function $h$ on $(C,\tau)$ such that
$\rho_{*}\mathcal{O}_{P}=\bigoplus_{m\in\mathbb{Z}}\mathcal{L}^{\otimes m}$
and $\mathcal{L}\otimes\tau^{*}\mathcal{L}=h^{-1}\mathcal{O}_{C}$
as $\mathcal{O}_{C}$-submodules of $\mathcal{K}_{C}$. 

Furthermore, two such pairs $(\mathcal{L}_{1},h_{1})$ and $(\mathcal{L}_{2},h_{2})$
determine isomorphic $\mathbb{S}^{1}$-torsors if and only if there
exists a rational function $f\in\Gamma(C,\mathcal{K}_{C}^{*})$ such
that $\mathcal{L}_{1}^{\vee}\otimes\mathcal{L}_{2}=f^{-1}\mathcal{O}_{C}$
and $h_{2}=(f\tau^{*}f)h_{1}$. 
\end{lem}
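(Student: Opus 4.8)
The plan is to deduce both assertions from the recalled classification of $\mathbb{G}_{m,\mathbb{C}}$-torsors together with the hyperbolicity statement of Lemma~\ref{lem:Hyperbolic-decomposition}, the observation being that the real structure $\sigma$ on $P$ is encoded entirely by its effect on the degree-one part of the graded algebra $\rho_*\mathcal{O}_P$. First I would forget the real structure and apply the recalled correspondence to the underlying $\mathbb{G}_{m,\mathbb{C}}$-torsor $\rho\colon P\to C$: this produces an invertible submodule $\mathcal{L}\subset\mathcal{K}_C$ together with an isomorphism of $\mathbb{Z}$-graded $\mathcal{O}_C$-algebras $\rho_*\mathcal{O}_P\cong\bigoplus_{m}\mathcal{L}^{\otimes m}$, the products being taken inside $\mathcal{K}_C$. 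Writing $\mathcal{A}_m=\mathcal{L}^{\otimes m}$, the compatibility of $\sigma$ with the $\mathbb{S}^1$-action (Definition~\ref{def:S1-action}) places us in the situation of Lemma~\ref{lem:Hyperbolic-decomposition}, whose conclusion $\sigma^*\mathcal{A}_1=\tau_*\mathcal{A}_{-1}$ says that $\sigma^*$ restricts to a $\tau$-semilinear isomorphism $\mathcal{L}\xrightarrow{\sim}\mathcal{L}^{\vee}$, after identifying $\mathcal{A}_{-1}=\mathcal{L}^\vee$ with $\mathcal{L}^{-1}$ inside $\mathcal{K}_C$.

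The heart of the argument is the comparison of this semilinear isomorphism with the canonical $\tau$-semilinear conjugation $x\mapsto\tau^*(x)$, which identifies $\mathcal{L}$ with the conjugate submodule $\tau^*\mathcal{L}\subset\mathcal{K}_C$. For a local section $x$ of $\mathcal{L}$ I would set $h:=\sigma^*(x)/\tau^*(x)\in\mathcal{K}_C^*$; using $\sigma^*|_{\mathcal{A}_0}=\tau^*$ and the multiplicativity of $\sigma^*$ one checks that $h$ does not depend on $x$, hence is a globally defined nonzero rational function satisfying $\sigma^*(x)=h\,\tau^*(x)$ for every local section of $\mathcal{L}$. The relation $\sigma^*\mathcal{L}=\mathcal{L}^{-1}$ then reads $\mathcal{L}^{-1}=h\,\tau^*\mathcal{L}$, which is exactly $\mathcal{L}\otimes\tau^*\mathcal{L}=h^{-1}\mathcal{O}_C$. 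The reality of $h$ is the point where the bookkeeping must be handled with care, since a priori $h$ is only real up to a global unit; it follows from the involutivity $(\sigma^*)^2=\mathrm{id}$: the degree-zero element $g_x=x\,\sigma^*(x)$ satisfies $\sigma^*(g_x)=\sigma^*(x)\,x=g_x$, hence $\tau^*g_x=g_x$, while on the other hand $g_x=h\,(x\,\tau^*x)$ with $x\,\tau^*x$ manifestly $\tau$-invariant, so comparing forces $\tau^*h=h$.

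For the second assertion I would describe an isomorphism of $\mathbb{S}^1$-torsors as a real, $\mathbb{G}_{m,\mathbb{C}}$-equivariant $C$-isomorphism $\Phi\colon P_1\to P_2$, that is, a graded $\mathcal{O}_C$-algebra isomorphism $\Phi^*\colon\bigoplus_m\mathcal{L}_2^{\otimes m}\to\bigoplus_m\mathcal{L}_1^{\otimes m}$ intertwining $\sigma_2^*$ and $\sigma_1^*$. Ignoring the real structures, the recalled $\mathbb{G}_{m,\mathbb{C}}$-classification shows that such graded isomorphisms correspond precisely to the rational functions $f\in\Gamma(C,\mathcal{K}_C^*)$ with $f\mathcal{L}_2=\mathcal{L}_1$, equivalently $\mathcal{L}_1^\vee\otimes\mathcal{L}_2=f^{-1}\mathcal{O}_C$, the map being multiplication by $f$ in degree one and by $f^{-1}$ in degree $-1$. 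It then remains to impose reality: evaluating $\Phi^*\circ\sigma_2^*=\sigma_1^*\circ\Phi^*$ on a degree-one section $y$ and using the defining relations $\sigma_i^*(x)=h_i\,\tau^*(x)$ gives $f^{-1}h_2=h_1\,\tau^*(f)$ after cancelling the common conjugate factor $\tau^*(y)$, that is $h_2=(f\tau^*f)h_1$. Conversely, this identity makes multiplication by $f$ a real isomorphism, which yields the stated criterion.

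The only genuine subtlety throughout is keeping the various $\tau$-semilinear identifications mutually consistent, in particular disentangling $\tau_*$ from the conjugation $\tau^*$ on submodules of $\mathcal{K}_C$; once the single relation $\sigma^*(x)=h\,\tau^*(x)$ is established, both the construction of $(\mathcal{L},h)$ and the equivalence criterion reduce to short computations in $\mathcal{K}_C^*$.
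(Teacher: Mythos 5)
Your proof is correct, and it rests on the same two inputs as the paper's: the classification of the underlying $\mathbb{G}_{m,\mathbb{C}}$-torsor by an invertible submodule $\mathcal{L}\subset\mathcal{K}_{C}$, and Lemma \ref{lem:Hyperbolic-decomposition}, which gives $\sigma^{*}\mathcal{A}_{m}=\tau_{*}\mathcal{A}_{-m}$ together with $\sigma^{*}|_{\mathcal{A}_{0}}=\tau^{*}$ (for this last point you should say explicitly that $\mathcal{A}_{0}=\mathcal{O}_{C}$ because $\rho$ is a torsor, which is immediate from local triviality). Where you genuinely differ is in the execution. The paper transports $\sigma^{*}$ through the chosen graded isomorphism into a tower of $\mathcal{O}_{C}$-module isomorphisms $\varphi_{m}:\mathcal{L}^{\otimes m}\rightarrow\tau_{*}\mathcal{L}^{\otimes(-m)}$, uses the evaluation pairing to prove $\varphi_{-1}=({}^{t}\varphi_{1})^{-1}$ so that the tower is determined by $\varphi_{1}$, rewrites $\varphi_{1}$ as an isomorphism $\psi:\tau^{*}\mathcal{L}\rightarrow\mathcal{L}^{\vee}$, and only then extracts $h$, the reality of $h$ being the translation of the involutivity condition $({}^{t}\psi^{-1})\circ\tau^{*}\psi=\mathrm{id}_{\mathcal{L}}$. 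You bypass this dual/transpose bookkeeping by exploiting that all graded pieces sit inside $\mathcal{K}_{C}$: your $h=\sigma^{*}(x)/\tau^{*}(x)$ is the paper's $\psi$ written as a scalar, its well-definedness follows from multiplicativity of $\sigma^{*}$ plus $\sigma^{*}|_{\mathcal{A}_{0}}=\tau^{*}$, and reality comes from the observation that $x\,\sigma^{*}(x)$ is $\sigma^{*}$-invariant of degree zero, hence a real function on $(C,\tau)$ --- two short function-field computations in place of the paper's diagram chases. Likewise, your evaluation of $\Phi^{*}\circ\sigma_{2}^{*}=\sigma_{1}^{*}\circ\Phi^{*}$ on a degree-one section reproduces the paper's commutative square for $\psi_{1},\psi_{2}$ and yields $h_{2}=(f\tau^{*}f)h_{1}$ with consistent conventions in both directions. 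What the paper's formulation buys is coordinate-freeness: it makes sense for an abstract invertible sheaf equipped with an isomorphism $\tau^{*}\mathcal{L}\cong\mathcal{L}^{\vee}$, independently of the embedding in $\mathcal{K}_{C}$. What yours buys is brevity, and it puts in plain view the single identity $\sigma^{*}(x)=h\,\tau^{*}(x)$, which is precisely the form in which the lemma is used in the rest of the paper.
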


\begin{proof}
Let $\mathcal{A}=\bigoplus_{m\in\mathbb{Z}}\mathcal{A}_{m}$ be the
decomposition of $\mathcal{A}=\rho_{*}\mathcal{O}_{P}$ into $\mathcal{O}_{C}$-submodules
of semi-invariants with respect to the action $\mu$ and let $\mathcal{L}$
be an invertible $\mathcal{O}_{C}$-submodule of $\mathcal{K}_{C}$
for which we have an isomorphism of graded $\mathcal{O}_{C}$-algebras
\[
\Psi:\mathcal{A}=\bigoplus_{m\in\mathbb{Z}}\mathcal{A}_{m}\stackrel{\cong}{\rightarrow}\bigoplus_{m\in\mathbb{Z}}\mathcal{L}^{\otimes m}.
\]
By Lemma \ref{lem:Hyperbolic-decomposition} 1), we have $\sigma^{*}\mathcal{A}_{m}=\tau_{*}\mathcal{A}_{-m}$
for every $m\in\mathbb{Z}$. It follows that for every $m\in\mathbb{Z}$,
the composition 
\[
\varphi_{m}:\tau_{*}\Psi\circ\sigma^{*}\circ\Psi^{-1}:\mathcal{L}^{\otimes m}\rightarrow\tau_{*}\mathcal{L}^{\otimes-m}
\]
is an isomorphism of $\mathcal{O}_{C}$-modules such that $\varphi_{0}=\tau^{*}:\mathcal{O}_{C}=\mathcal{L}^{\otimes0}\rightarrow\tau_{*}\mathcal{L}^{\otimes-0}=\tau_{*}\mathcal{O}_{C}$
and 
\[
\varphi_{m+m'}=\varphi_{m}\otimes\varphi_{m'}:\mathcal{L}^{\otimes(m+m')}=\mathcal{L}^{\otimes m}\otimes\mathcal{L}^{\otimes m'}\rightarrow\tau_{*}\mathcal{L}^{\otimes(-m+-m')}=\tau_{*}\mathcal{L}^{\otimes(-m)}\otimes\tau_{*}\mathcal{L}^{\otimes(-m')}
\]
for every $m,m'\in\mathbb{Z}$. Furthermore, since $\tau_{*}\sigma^{*}\circ\sigma^{*}=\mathrm{id}_{\mathcal{A}}$
and $\tau_{*}^{2}=\mathrm{id}_{\mathcal{O}_{C}}$, we have 
\[
\tau_{*}\varphi_{-m}\circ\varphi_{m}=\mathrm{id}_{\mathcal{L}^{\otimes m}}\quad\textrm{and}\quad\tau_{*}\varphi_{m}\circ\varphi_{-m}=\mathrm{id}_{\mathcal{L}^{\otimes(-m)}}
\]
for every $m\in\mathbb{Z}$. For $m=1$ and $m'=-1$, the commutativity
of the diagram \[\xymatrix{\mathcal{L}\otimes \mathcal{L}^{\vee} \ar[d]_{\mathrm{ev}} \ar[rr]^{\varphi_1\otimes \varphi_{-1} } && \tau_*\mathcal{L}^{\vee}\otimes \tau_*\mathcal{L} \ar[d]^{\tau_*{\mathrm{ev}}} \\ \mathcal{O}_C \ar[rr]^{\tau^*} && \tau_*\mathcal{O}_C,}\]
where $\mathrm{ev}:\mathcal{L}\otimes\mathcal{L}^{\vee}\stackrel{\cong}{\rightarrow}\mathcal{O}_{C}$
is the canonical homomorphism $f\otimes f'\mapsto f'(f)$, implies
that 
\[
\varphi_{-1}=(^{t}\varphi_{1})^{-1}:\mathcal{L}^{\vee}\rightarrow\tau_{*}\mathcal{L}.
\]
It follows that $\varphi_{m}=\varphi_{1}^{\otimes m}$ for $m\geq1$
and that $\varphi_{m}=(^{t}\varphi_{1})^{\otimes(-m)}$ for $m\leq-1$.
The collection $(\varphi_{m})_{m\in\mathbb{Z}}$ is thus uniquely
determined by $\varphi_{0}=\tau^{*}$ and an isomorphism $\varphi_{1}:\mathcal{L}\rightarrow\tau_{*}\mathcal{L}^{\vee}$
satisfying the identity $\tau_{*}(^{t}\varphi_{1})^{-1}\circ\varphi_{1}=\mathrm{id}_{\mathcal{L}}$,
hence equivalently by an isomorphism $\psi=\tau^{*}\varphi_{1}:\tau^{*}\mathcal{L}\rightarrow\mathcal{L}^{\vee}$
such that $(^{t}\psi^{-1})\circ\tau^{*}\psi=\mathrm{id}_{\mathcal{L}}$.
An isomorphism $\psi:\tau^{*}\mathcal{L}\stackrel{\cong}{\rightarrow}\mathcal{L}^{\vee}$
is in turn equivalenty determined by an isomorphism $\mathcal{O}_{C}\stackrel{\cong}{\rightarrow}\mathcal{L}\otimes\tau^{*}\mathcal{L}$,
that is, by a rational function $h\in\Gamma(C,\mathcal{K}_{C}^{*})$
such that $\mathcal{L}\otimes\tau^{*}\mathcal{L}=h^{-1}\mathcal{O}_{C}$
as $\mathcal{O}_{C}$-submodules of $\mathcal{K}_{C}$. The condition
$(^{t}\psi^{-1})\circ\tau^{*}\psi=\mathrm{id}_{\mathcal{L}}$ then
amounts to the property that $h^{-1}(\tau^{*}h)=1$, i.e. that $h$
is a real rational function on $(C,\tau)$.

Two invertible $\mathcal{O}_{C}$-submodules $\mathcal{L}_{1},\mathcal{L}_{2}\subset\mathcal{K}_{C}$
define equivariantly isomorphic $\mathbb{G}_{m,\mathbb{C}}$-torsors
$\rho_{1}:P_{1}\rightarrow C$ and $\rho_{2}:P_{2}\rightarrow C$
if and only if there exists an isomorphism $\alpha:\mathcal{L}_{1}\stackrel{\cong}{\rightarrow}\mathcal{L}_{2}$.
When $\mathcal{L}_{1}$ and $\mathcal{L}_{2}$ come with respective
isomorphisms $\psi_{1}:\tau^{*}\mathcal{L}_{1}\rightarrow\mathcal{L}_{1}^{\vee}$
and $\psi_{2}:\tau^{*}\mathcal{L}_{2}\rightarrow\mathcal{L}_{2}^{\vee}$
corresponding to $\mathbb{S}^{1}$-actions on $(P_{1},\sigma_{1})$
and $(P_{2},\sigma_{2})$, the condition that a given isomorphism
$\alpha:\mathcal{L}_{1}\stackrel{\cong}{\rightarrow}\mathcal{L}_{2}$
induces an $\mathbb{S}^{1}$-equivariant isomorphism between $(P_{1},\sigma_{1})$
and $(P_{2},\sigma_{2})$ is equivalent to the commutativity of the
diagram \[\xymatrix{ \tau^*\mathcal{L}_1 \ar[d]_{\tau^*\alpha} \ar[r]^{\psi_1} & \mathcal{L}_1^{\vee} \ar[d]^{^t\alpha^{-1}} \\ \tau^*\mathcal{L}_2 \ar[r]^{\psi_2} & \mathcal{L}_2^{\vee}.}\]
The isomorphism $\alpha$ is uniquely determined by a rational function
$f\in\Gamma(C,\mathcal{K}_{C}^{*})$ such that $\mathcal{L}_{1}^{\vee}\otimes\mathcal{L}_{2}=f^{-1}\mathcal{O}_{C}$
as $\mathcal{O}_{C}$-submodules of $\mathcal{K}_{C}$. By definition
of $h_{1}$ and $h_{2}$ as the unique nonzero real rational functions
on $(C,\tau)$ such that $\mathcal{L}_{i}\otimes\tau^{*}\mathcal{L}_{i}=h_{i}^{-1}\mathcal{O}_{C}$,
$i=1,2$, the commutativity of the above diagram is equivalent to
the equality $h_{2}=(f\tau^{*}f)h_{1}$. 
\end{proof}
\begin{example}
\label{exa:S1-torsor-pt} By Hilbert\textquoteright s Theorem 90,
every $\mathbb{G}_{m,\mathbb{C}}$-torsor over $\mathrm{Spec}(\mathbb{C})$
is isomorphic to the trivial one, that is, to $\mathbb{G}_{m,\mathbb{C}}$
acting on itself by translations. In contrast, there exists precisely
two non-isomorphic $\mathbb{S}^{1}$-torsors over $\mathrm{Spec}(\mathbb{R})=(\mathrm{Spec}(\mathbb{C}),\sigma_{\mathrm{Spec}(\mathbb{R})})$: 

1) The trivial one given by $\mathbb{S}^{1}=(\mathbb{G}_{m,\mathbb{C}}=\mathrm{Spec}(\mathbb{C}[t^{\pm1}],\sigma_{0})$
acting on itself by translations. A corresponding pair is $(\mathcal{L},h)=(\mathbb{C},1)$, 

2) A nontrivial one $\hat{\mathbb{S}}^{1}=(\mathrm{Spec}(\mathbb{C}[u^{\pm1}],\hat{\sigma}_{0})$
whose real structure $\hat{\sigma}_{0}$ is the composition of the
involution $u\mapsto-u^{-1}$ with the complex conjugation, endowed
with the $\mathbb{S}^{1}$-action given by $t\cdot u=tu$. A corresponding
pair is $(\mathcal{L},h)=(\mathbb{C},-1)$. 

Note that the real locus of $\mathbb{S}^{1}$ is isomorphic to the
real circle $S^{1}=\{x^{2}+y^{2}=1\}\subset\mathbb{R}^{2}$ whereas
the real locus of $\hat{\mathbb{S}}^{1}$ is empty. 
\end{example}

\section{Circle actions on smooth real affine surfaces}

In this section, we first review the correspondence between normal
real affine surfaces $(S,\sigma)$ with effective $\mathbb{S}^{1}$-actions
and suitable pairs consisting of a Weil $\mathbb{Q}$-divisor and
a rational function on smooth real affine curves $(C,\tau)$, which
we call real DPD-pairs. We characterize smooth affine surfaces in
terms of properties of their corresponding pairs. We then describe
the structure of exceptional orbits of $\mathbb{S}^{1}$-actions on
smooth surfaces $(S,\sigma)$ in relation to degenerate fibers of
their quotient morphisms. 

\subsection{Real DPD-presentations of smooth real affine surfaces with $\mathbb{S}^{1}$-actions }

Recall that a Weil $\mathbb{Q}$-divisor on a smooth real affine curve
$(C=\mathrm{Spec}(A_{0}),\tau)$ is an element of the abelian group
consisting of formal sums
\[
D=\sum_{c\in C}D(c)\{c\}\in\mathbb{Q}\otimes_{\mathbb{Z}}\mathrm{Div}(C)
\]
such that $D(c)\in\mathbb{Q}$ is equal to zero for all but finitely
many points $c\in C$. The \emph{support} of $D$ is the finite set
of points $c\in C$ such that $D(c)\neq0$. The group of Weil $\mathbb{Q}$-divisors
is partially ordered by the relation 
\[
(D\leq D'\Leftrightarrow D(c)\leq D'(c)\quad\forall c\in C).
\]
Every nonzero rational function $f$ on $C$ determines a Weil $\mathbb{Q}$-divisor
$\mathrm{div}(f)=\sum_{c\in C}(\mathrm{ord}_{c}f)\{c\}$ with integral
coefficients. For every Weil $\mathbb{Q}$-divisor $D$ on $C$, we
denote by $\Gamma(C,\mathcal{O}_{C}(D))$ the $A_{0}$-submodule of
the field of fractions $\mathrm{Frac}(A_{0})$ of $A_{0}$ generated
by nonzero rational functions $f$ on $C$ such that $\mathrm{div}(f)+D\geq0$. 

Given an automorphism $\alpha$ of $C$ as a scheme over $\mathbb{R}$
or $\mathbb{C}$, the pull-back of $D=\sum_{c\in C}D(c)\{c\}$ by
$\alpha$ is the Weil $\mathbb{Q}$-divisor 
\[
\alpha^{*}D=\sum_{c\in C}D(c)\{\alpha^{-1}(c)\}=\sum_{c\in C}D(\alpha(c))\{c\}.
\]

\begin{defn}
A \emph{real DPD-pair} on a smooth real affine curve $(C,\tau)$ is
a pair $(D,h)$ consisting of a Weil $\mathbb{Q}$-divisor $D$ on
$C$ and a nonzero real rational function $h$ on $(C,\tau)$ satisfying
$D+\tau^{*}D\leq\mathrm{div}(h)$. 
\end{defn}

We say that two rational numbers $r_{i}=p_{i}/q_{i}$, $i=1,2$, where
$\gcd(p_{i},q_{i})=1$, form a \emph{regular pair} if $\left|p_{1}q_{2}-p_{2}q_{1}\right|=1$. 
\begin{defn}
A real DPD-pair $(D,h)$ on a smooth real affine curve $(C,\tau)$
is said to be \emph{regular} if for every $c\in C$ such that $D(c)+D(\tau(c))<\mathrm{ord}_{c}(h)$
the rational numbers $D(c)$ and $D(\tau(c))-\mathrm{ord}_{c}(h)$
form a regular pair. 
\end{defn}

Given a smooth real affine curve $(C,\tau)$, a pair $(\mathcal{L},h)$
consisting of an invertible $\mathcal{O}_{C}$-submodule $\mathcal{L}\subset\mathcal{K}_{C}$
and a real rational function $h$ on $(C,\tau)$ such that $\mathcal{L}\otimes\tau^{*}\mathcal{L}=h^{-1}\mathcal{O}_{C}$
as $\mathcal{O}_{C}$-submodules of $\mathcal{K}_{C}$ determines
a Cartier divisor $D$ on $C$ such that $D+\tau^{*}D=\mathrm{div}(h)$,
hence a regular real DPD-pair $(D,h)$ on $(C,\tau)$. By Lemma \ref{lem:S1-torsors},
every smooth real affine surface $(S,\sigma)$ endowed with the structure
of an $\mathbb{S}^{1}$-torsor over $(C,\tau)$ is determined by such
a regular real DPD-pair $(D,h)$. More generally, we have the following: 
\begin{thm}
\label{thm:MainThm} Every normal real affine surface $(S,\sigma)$
with an effective $\mathbb{S}^{1}$-action $\mu:\mathbb{G}_{m,\mathbb{C}}\times S\rightarrow S$
is determined by a smooth real affine curve $(C,\tau)$ and a real
DPD-pair $(D,h)$ on it. Furthermore, the following hold:

1) Two DPD-pairs $(D_{1},h_{1})$ and $(D_{2},h_{2})$ on the same
curve $(C,\tau)$ determine $\mathbb{S}^{1}$-equivariantly isomorphic
real affine surfaces if and only if there exists a real automorphism
$\psi$ of $(C,\tau)$ and a rational function $f$ on $C$ such that
\[
\psi^{*}D_{2}=D_{1}+\mathrm{div}(f)\qquad\textrm{and}\qquad\psi^{*}h_{2}=(f\tau^{*}f)h_{1}.
\]

2) The normal real affine surface $(S,\sigma)$ determined by a real
DPD-pair $(D,h)$ is smooth if and only if the pair is regular.
\end{thm}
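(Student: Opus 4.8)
The plan is to transport the problem to the underlying complex surface $S$ with its induced effective $\mathbb{G}_{m,\mathbb{C}}$-action and then graft the real structure $\sigma$ onto the Flenner--Zaidenberg presentation of $S$. First I would apply Lemma \ref{lem:Hyperbolic-decomposition} to the structure morphism of $(S,\sigma)$ over $\mathrm{Spec}(\mathbb{R})$: the action is hyperbolic, the real categorical quotient $(C,\tau):=(\mathrm{Spec}(A_0),\tau_0)$ is a smooth real affine curve (its coordinate ring $A_0=A^{\mathbb{G}_{m,\mathbb{C}}}$ is a normal one-dimensional $\mathbb{R}$-algebra, hence regular, hence smooth since $\mathbb{R}$ is perfect), and $\sigma^*A_m=\tau_*A_{-m}$ for all $m$. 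Forgetting $\sigma$, the complex hyperbolic $\mathbb{G}_{m,\mathbb{C}}$-surface $S$ is described by \cite{FZ03} through a pair of Weil $\mathbb{Q}$-divisors $(D_+,D_-)$ on $C_{\mathbb{C}}$ with $D_++D_-\leq 0$, realized via $\mathcal{A}_m=\mathcal{O}_C(\lfloor mD_+\rfloor)$ for $m\geq 0$ and $\mathcal{A}_{-m}=\mathcal{O}_C(\lfloor mD_-\rfloor)$ for $m\geq 0$ as subsheaves of $\mathcal{K}_C$.

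The heart of the argument is to convert the identities $\sigma^*\mathcal{A}_m=\tau_*\mathcal{A}_{-m}$ into divisorial data. Since $\sigma^*$ is a ring involution interchanging the weight spaces, the whole collection of isomorphisms $\sigma^*\colon\mathcal{A}_m\to\tau_*\mathcal{A}_{-m}$ is, exactly as in the proof of Lemma \ref{lem:S1-torsors}, determined multiplicatively by its weight-one component together with the involutivity constraint $\tau_*\sigma^*\circ\sigma^*=\mathrm{id}$. Carrying out the same computation with the reflexive sheaves $\mathcal{A}_{\pm m}$ in place of invertible ones yields a nonzero real rational function $h$ on $(C,\tau)$ (reality being the normalized form of the involutivity constraint) such that $D_-=\tau^*D_+-\mathrm{div}(h)$. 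Setting $D:=D_+$, the inequality $D_++D_-\leq 0$ becomes $D+\tau^*D\leq\mathrm{div}(h)$, so $(D,h)$ is a real DPD-pair; conversely $(D_+,D_-):=(D,\tau^*D-\mathrm{div}(h))$ reconstructs $S$ while $h$ reconstructs $\sigma$, which establishes the correspondence.

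For part 1), an $\mathbb{S}^{1}$-equivariant isomorphism induces a real automorphism $\psi$ of $(C,\tau)$ between the quotients, and the complex Flenner--Zaidenberg isomorphism criterion forces $\psi^*D_2=D_1+\mathrm{div}(f)$ for some $f\in\Gamma(C,\mathcal{K}_C^*)$; tracking how this shift interacts with the weight-one isomorphisms encoding the real structure reproduces, as in the last paragraph of the proof of Lemma \ref{lem:S1-torsors}, the rule $\psi^*h_2=(f\tau^*f)h_1$, and conversely these two identities patch an equivariant isomorphism together. For part 2), I would use that smoothness is a geometric property, so $(S,\sigma)$ is smooth if and only if the complex surface $S$ is; by the local analysis of \cite{FZ03}, the only possibly singular points of the hyperbolic surface $S$ lie over the finitely many $c\in C$ with $D_+(c)+D_-(c)<0$, where $S$ carries a cyclic quotient singularity that is in fact a smooth point exactly when $D_+(c)$ and $D_-(c)$ form a regular pair. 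Substituting $D_+(c)=D(c)$, $D_-(c)=D(\tau(c))-\mathrm{ord}_c(h)$ and rewriting $D_+(c)+D_-(c)<0$ as $D(c)+D(\tau(c))<\mathrm{ord}_c(h)$ turns this condition into exactly the regularity of $(D,h)$.

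The step I expect to be most delicate is the extraction of $h$ and the relation $D_-=\tau^*D_+-\mathrm{div}(h)$: unlike the torsor case of Lemma \ref{lem:S1-torsors}, the sheaves $\mathcal{A}_{\pm m}$ are not invertible, so one must argue at the level of all weights (or pass to the saturated reflexive data) to recover the $\mathbb{Q}$-divisor $D$ rather than merely its round-down, and one must check that the involutivity of $\sigma^*$ genuinely normalizes the twisting function to be $\tau$-invariant. A secondary point requiring care is matching the conventions of the Flenner--Zaidenberg smoothness criterion---in particular treating separately the real points $c=\tau(c)$ and the conjugate pairs $c\neq\tau(c)$---so that the unimodularity condition there coincides on the nose with the regular-pair condition in the definition.
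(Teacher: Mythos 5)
Your proposal is correct, and for part 2) it coincides exactly with the paper's argument: both pass to the complex surface determined by $(D_{+},D_{-})=(D,\tau^{*}D-\mathrm{div}(h))$, invoke the smoothness criterion of \cite[Theorem 4.15]{FZ03}, and observe that the substitution $D_{-}(c)=D(\tau(c))-\mathrm{ord}_{c}(h)$ translates the condition there into regularity of $(D,h)$ (your worry about treating real points and conjugate pairs separately is unnecessary, since both conditions are imposed pointwise on all complex points of $C$). For the correspondence and part 1) the routes differ in an instructive way: the paper's formal proof is a citation to \cite[Proposition 3.2, Corollary 2.16]{DuLi18}, whose proofs go through the Altmann--Hausen polyhedral-divisor formalism, while your self-contained derivation via Lemma \ref{lem:Hyperbolic-decomposition} and descent of the Flenner--Zaidenberg presentation is essentially what the paper only \emph{reviews} in \S\ref{subsec:DPD-2-Surface} and \S\ref{subsec:Surface-2-DPD}. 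The one place where your plan, as stated, would not go through is precisely the step you flag as delicate: rerunning the proof of Lemma \ref{lem:S1-torsors} on the weight sheaves $\mathcal{A}_{\pm m}$ fails not because these sheaves are non-invertible (on the smooth curve $C$ each $\mathcal{O}_{C}(\lfloor mD_{\pm}\rfloor)$ is invertible), but because the graded algebra is no longer generated in degree one, so the multiplication maps $\mathcal{A}_{1}^{\otimes m}\rightarrow\mathcal{A}_{m}$ are not surjective and the weight-one sheaf map $\varphi_{1}$ does not determine the collection $(\varphi_{m})$ by tensor powers. The paper's fix, which is the concrete form of your ``argue at the level of all weights'' remark, is to work in the function field: effectiveness guarantees a rational semi-invariant $s$ of weight $1$; setting $h:=s\sigma^{*}s$ gives a $\tau^{*}$-invariant rational function, and the relation $D_{-}=\tau^{*}D_{+}-\mathrm{div}(h)$ is read off from $\tau^{*}(s^{-m}A_{m})=h^{-m}(s^{m}A_{-m})$, with the converse reconstruction of $\sigma$ given by $\tau_{m}^{*}(f)=h^{m}\tau^{*}f$. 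With that substitution your outline, including the part 1) bookkeeping $\psi^{*}D_{2}=D_{1}+\mathrm{div}(f)$, $\psi^{*}h_{2}=(f\tau^{*}f)h_{1}$, matches the paper's treatment; what the citation to \cite{DuLi18} buys the authors is rigor without redoing this descent, and what your route buys is independence from the polyhedral-divisor machinery.
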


The correspondence between normal real affine surface $(S,\sigma)$
with an effective $\mathbb{S}^{1}$-actions and real DPD-pairs on
smooth real affine curves $(C,\tau)$ was established in \cite[Proposition 3.2]{DuLi18}
as a particular case of a general correspondence between $\mathbb{S}^{1}$-actions
on normal real affine varieties and suitable pairs $(D,h)$ on certain
normal real semi-projective varieties \cite[Corollary 2.16]{DuLi18},
whose proof uses the formalism of polyhedral divisors due to Altmann
and Hausen \cite{AH}. Since this correspondence provides an explicit
method to determine the data $(S,\sigma)$ and $(C,\tau)$, $(D,h)$
from each others, we will review it in detail using the DPD-formalism
of \cite{FZ03} in the next subsections. 
\begin{proof}[Proof of Theorem \ref{thm:MainThm}]
 Assertion 1) follows from Corollary 2.16 in \cite{DuLi18}. Note
that if $(D_{2},h_{2})$ is regular real DPD-pair then for every real
automorphism $\psi$ of $(C,\tau)$ and every rational function $f$
on $C$, the real DPD-pair 
\[
(D_{1},h_{1})=(\psi^{*}D_{2}-\mathrm{div}(f),(f\tau^{*}f)^{-1}\psi^{*}h_{2})
\]
is regular due to the fact that $\mathrm{div}(f)$ and $\mathrm{div}((f\tau^{*}f)^{-1}\psi^{*}h_{2})$
are integral Weil divisors on $C$. 

To prove 2), let $(S,\sigma)$ be the normal real affine surface with
$\mathbb{S}^{1}$-action determined by real DPD-pair $(D,h)$ on a
smooth real affine curve $(C,\tau)$ as in $\S$ \ref{subsec:DPD-2-Surface}
below. Let $\pi:(S,\sigma)\rightarrow(C,\tau)$ be its real quotient
morphism and let $D_{+}=D$ and $D_{-}=\tau^{*}D-\mathrm{div}(h)$.
By \cite[Theorem 4.15]{FZ03}, the singular locus of $S$ is contained
in the fibers of the quotient morphism $\pi:S\rightarrow C$ over
the points $c\in C$ such that $D_{+}(c)+D_{-}(c)<0$. Furthermore,
for such a point $c$, $S$ is smooth at every point of $\pi^{-1}(c)$
if and only if the rational numbers $D_{+}(c)$ and $D_{-}(c)$ form
a regular pair. 
\end{proof}

\subsubsection{\label{subsec:DPD-2-Surface}From real DPD-pairs to normal real affine
surfaces with effective $\mathbb{S}^{1}$-actions }

Given a real DPD-pair $(D,h)$ on a smooth real affine curve $(C=\mathrm{Spec}(A_{0}),\tau)$,
we set $D_{+}=D$ and $D_{-}=\tau^{*}D_{+}-\mathrm{div}(h)$. The
condition $D+\tau^{*}D\leq\mathrm{div}(h)$ implies that $D_{+}+D_{-}\leq0$,
so that for every $m'\leq0\leq m$, the product 
\[
\Gamma(C,\mathcal{O}_{C}(-m'D_{-}))\cdot\Gamma(C,\mathcal{O}_{C}(mD_{+}))
\]
in $\mathrm{Frac}(A_{0})$ in contained either in $\Gamma(C,\mathcal{O}_{C}(-(m'+m)D_{-}))$
if $m'+m\leq0$ or in $\Gamma(C,\mathcal{O}_{C}((m'+m)D_{+}))$ $\textrm{if }m'+m\geq0$.
It follows that the graded $A_{0}$-module 
\[
A_{0}[D_{-},D_{+}]=\bigoplus_{m<0}\Gamma(C,\mathcal{O}_{C}(-mD_{-}))\oplus\Gamma(C,\mathcal{O}_{C})\oplus\bigoplus_{m>0}\Gamma(C,\mathcal{O}_{C}(mD_{+}))
\]
is a graded $A_{0}$-algebra for the multiplication law given by component
wise multiplication in $\mathrm{Frac}(A_{0})$. By \cite[$\S$ 4.2]{FZ03},
$A_{0}[D_{+},D_{-},h]$ is finitely generated over $\mathbb{C}$ and
normal. The grading then corresponds to an effective hyperbolic $\mathbb{G}_{m,\mathbb{C}}$-action
$\mu:\mathbb{G}_{m,\mathbb{C}}\times S\rightarrow S$ on the normal
complex affine surface $S=\mathrm{Spec}(A_{0}[D_{-},D_{+}])$. The
ring of invariants for this action is equal to $A_{0}$ and the morphism
$\pi:S\rightarrow C=\mathrm{Spec}(A_{0})$ induced by the inclusion
$A_{0}\subset A_{0}[D_{-},D_{+}]$ is the categorical quotient morphism.
Since $D_{-}=\tau^{*}D_{+}-\mathrm{div}(h)$, for every $m\geq1$,
the homomorphism 
\[
\tau_{m}^{*}:\Gamma(C,\mathcal{O}_{C}(mD_{+}))\mapsto\Gamma(C,\mathcal{O}_{C}(mD_{-})),\;f\mapsto h^{m}\tau^{*}f
\]
is an isomorphism with inverse
\[
\tau_{-m}^{*}:\Gamma(C,\mathcal{O}_{C}(-mD_{-}))\mapsto\Gamma(C,\mathcal{O}_{C}(-mD_{+})),\;f\mapsto h^{m}\tau^{*}f.
\]
Letting $\tau_{0}=\tau$, these isomorphisms collect into an automorphism
$\sigma^{*}=\bigoplus_{m\in\mathbb{Z}}\tau_{m}^{*}$ of $A_{0}[D_{-},D_{+}]$
which is the comorphism of a real structure $\sigma$ on $S$ for
which we have $\sigma\circ\mu=\mu\circ(\sigma_{0}\times\sigma)$.
It follows that $(S,\sigma)$ is a normal real affine surface and
that $\mu:\mathbb{G}_{m,\mathbb{C}}\times S\rightarrow S$ is an effective
$\mathbb{S}^{1}$-action on $(S,\sigma)$ in the sense of Definition
\ref{def:S1-action}. 
\begin{example}
\label{exa:Reducible-Real-Fiber}Let $(C=\mathrm{Spec}(A_{0}),\tau)$
be a smooth real affine curve with a real point $c$ whose defining
ideal is principal, generated by a real regular function $h$ on $(C,\tau)$.
Let $D$ be the trivial divisor $0$. Then $(D,h)$ is a real DPD-pair
on $(C,\tau)$ for which we have $D_{+}=D=0$ and $D_{-}=\tau^{*}D_{-}-\mathrm{div}(h)=-\{c\}$.
It follows that $\Gamma(C,\mathcal{O}_{C}(mD_{+}))=A_{0}$ and that
$\Gamma(C,\mathcal{O}_{C}(mD_{-}))=h^{m}A_{0}$ for every $m\geq0$.
The corresponding homomorphism 
\[
\tau_{m}^{*}:\Gamma(C,\mathcal{O}_{C}(mD_{+}))=A_{0}\mapsto h^{m}A_{0}=\Gamma(C,\mathcal{O}_{C}(mD_{-}))
\]
is the multiplication by $h^{m}$. The algebra $A_{0}[D_{+},D_{-}]$
is generated by the homogeneous elements 
\[
x=1\in\Gamma(C,\mathcal{O}_{C}(D_{+}))=\Gamma(C,\mathcal{O}_{C})\quad\textrm{and}\quad y=h\in\Gamma(C,\mathcal{O}_{C}(D_{-}))=\Gamma(C,\mathcal{O}_{C}(-\{c\}))
\]
of degree $1$ and $-1$ respectively. These satisfy the obvious homogeneous
relation $xy=h$, and we have 
\[
A_{0}[D_{+},D_{-}]\cong A_{0}[x,y]/(xy-h).
\]
The corresponding $\mathbb{G}_{m,\mathbb{C}}$-action $\mu$ on $S=\mathrm{Spec}(A_{0}[D_{+},D_{-}])$
is given by $t\cdot(x,y)=(tx,t^{-1}y)$ and the real structure $\sigma$
for which $\mu$ becomes an $\mathbb{S}^{1}$-action on $(S,\sigma)$
is the lift of $\tau$ defined by $\sigma^{*}x=y$ and $\sigma^{*}y=x$. 
\end{example}

\subsubsection{\label{subsec:Surface-2-DPD}From normal real affine surfaces with
effective $\mathbb{S}^{1}$-actions to real DPD-pairs}

Given a normal real affine surface $(S,\sigma)$ with an effective
$\mathbb{S}^{1}$-action $\mu:\mathbb{G}_{m,\mathbb{C}}\times S\rightarrow S$,
it follows from Lemma \ref{lem:Hyperbolic-decomposition} that the
coordinate ring $A$ of $S$ decomposes as the direct sum $A=\bigoplus_{m\in\mathbb{Z}}A_{m}$
of semi-invariants sub-spaces such that $\sigma^{*}(A_{m})=A_{-m}$
for every $m\in\mathbb{Z}$. The curve $C=\mathrm{Spec}(A_{0})$ is
the categorical quotient of the $\mathbb{G}_{m,\mathbb{C}}$-action
on $S$. The restriction of $\sigma^{*}$ to $A_{0}$ induces a real
structure $\tau$ on $C$. Let $s\in\mathrm{Frac}(A)$ be any semi-invariant
rational function of weight $1$ and let $h=s\sigma^{*}s\in\mathrm{Frac}(A)$.
Since $\sigma^{*}s$ is a semi-invariant rational function of weight
$-1$, $h$ is a $\sigma^{*}$-invariant rational function of weight
$0$, hence a $\tau^{*}$-invariant element of $\mathrm{Frac}(A_{0})$.
For every $m\in\mathbb{Z}$, $s^{-m}A_{m}$ is a locally free $A_{0}$-submodule
of $\mathrm{Frac}(A_{0})$. By \cite[$\S$ 4.2]{FZ03}, there exists
Weil $\mathbb{Q}$-divisors $D_{+}$ and $D_{-}$ on $C$ satisfying
$D_{+}+D_{-}\leq0$ such that for every $m\geq0$ we have 
\[
s^{-m}A_{m}=\Gamma(C,\mathcal{O}_{C}(mD_{+}))\quad\textrm{and}\quad s^{m}A_{-m}=\Gamma(C,\mathcal{O}_{C}(mD_{-}))
\]
as $A_{0}$-submodules of $\mathrm{Frac}(A_{0})$. Since by Lemma
\ref{lem:Hyperbolic-decomposition} and the definition of $h$, we
have 
\[
\tau^{*}(s^{-m}\cdot A_{m})=h^{-m}(s^{m}\cdot A_{-m})\quad\forall m\in\mathbb{Z},
\]
it follows that $D_{-}=\tau^{*}D_{+}-\mathrm{div}(h)$. So setting
$D=D_{+}$, the pair $(D,h)$ is a real DPD- pair on the smooth real
affine curve $(C,\tau)$. By construction, $S\cong\mathrm{Spec}(A_{0}[D_{-},D_{+}])$
and the real structure $\sigma$ on $S$ coincides with that constructed
from $(D,h)$ in the previous subsection. 
\begin{example}
Let $(C=\mathrm{Spec}(R),\tau)$ be a smooth real affine curve with
a real point $c$ whose defining ideal is principal, generated by
some real regular function $f$ on $(C,\tau)$. Let $A=R[x^{\pm1},y]/(xy^{2}-f)$
and let $S=\mathrm{Spec}(A)$. The morphism $\mu:\mathbb{G}_{m,\mathbb{C}}\times S\rightarrow S$,
$(t,(x,y))\mapsto(t^{2}x,t^{-1}y)$ defines an $\mathbb{G}_{m,\mathbb{C}}$-action
on $S$ by $C$-automorphisms, which becomes an $\mathbb{S}^{1}$-action
by $(C,\tau)$-automorphisms when $S$ is endowed with the unique
real structure $\sigma$ lifting $\tau$ such $\sigma^{*}x=x^{-1}$
and $\sigma^{*}y=xy$. The ring of $\mathbb{G}_{m,\mathbb{C}}$-invariant
$A_{0}$ is equal to $R[xy^{2}]/(xy^{2}-f)\cong R$. Choosing $s=y^{-1}$
as semi-invariant rational function of weight $1$, we have $h=y^{-1}\sigma^{*}(y^{-1})=x^{-1}y^{-2}=f^{-1}\in\mathrm{Frac}(R)$.
The decompositon of $A$ into subspaces of semi-invariants functions
is then given for every $m\geq0$ by 
\begin{align*}
s^{-m}A_{m} & =s^{-m}R\cdot(xy)^{m}=R\cdot(xy^{2})^{m}=f^{m}R=\Gamma(C,\mathcal{O}_{C}(mD_{+}),\\
s^{2m+1}A_{-2m-1} & =s^{2m+1}R\cdot(x^{-m}y)=R\cdot(xy^{2})^{-m}=f^{-m}R=\Gamma(C,\mathcal{O}_{C}((2m+1)D_{-})),\\
s^{2m}A_{-2m} & =s^{2m}R\cdot x^{-m}=R\cdot(x^{-m}y^{-2m})=f^{-m}R=\Gamma(C,\mathcal{O}_{C}(2mD_{-})).
\end{align*}
It follows that $D_{+}=-r\{c\}$ for some rational number $r\in]0,1]$
and that
\[
D_{-}=\tau^{*}(D_{+})-\mathrm{div}(h)=r\{c\}-\mathrm{div}(f^{-1})=(1-r)\{c\}.
\]
Since $f^{-m}R=\Gamma(C,\mathcal{O}_{C}((2m+1)D_{-}))=\Gamma(C,\mathcal{O}_{C}(2mD_{-}))$
for every $m\geq0$, it follows that 
\[
\frac{m}{2m+1}\leq(1-r)<\frac{m+1}{2m+1}\quad\textrm{and}\quad\frac{1}{2}\leq(1-r)<\frac{m+1}{2m}
\]
for every $m\geq0$. Thus $(1-r)=\frac{1}{2}$ and a real DPD-pair
on $(C,\tau)$ corresponding to $(S,\sigma)$ endowed with the $\mathbb{S}^{1}$-action
$\mu$ is $(D,h)=(-\frac{1}{2}\{c\},f^{-1})$. 
\end{example}

\subsection{Real fibers of the quotient morphism: principal and exceptional orbits}

Let $(S,\sigma)$ be a smooth real affine surface with an effective
$\mathbb{S}^{1}$-action $\mu:\mathbb{G}_{m,\mathbb{C}}\times S\rightarrow S$,
and let $\pi:(S,\sigma)\rightarrow(C,\tau)$ be its real quotient
morphism. Recall that $\pi:S\rightarrow C=\mathrm{Spec}(\Gamma(S,\mathcal{O}_{S})^{\mathbb{G}_{m,\mathbb{C}}})$
is surjective and that each fiber of $\pi$ contains a unique closed
$\mathbb{G}_{m,\mathbb{C}}$-orbit $Z$ and is the union of all $\mathbb{G}_{m,\mathbb{C}}$-orbits
in $S$ containing $Z$ in their closure. In the complex case, \cite[Theorem 18]{FZ03}
provides a description of the structure of the fibers of $\pi$ in
terms of a pair of Weil $\mathbb{Q}$-divisors $D_{+}$ and $D_{-}$
on $C$ for which $\Gamma(S,\mathcal{O}_{S})=A_{0}[D_{-},D_{+}]$
(see $\S$ \ref{subsec:Surface-2-DPD}). In this subsection, we specialize
this description to fibers of $\pi$ over points in the real locus
of $(C,\tau)$. We begin with the following example which illustrates
different possibilities for such fibers. 
\begin{example}
\label{exa:Real-Fiber-Struct}Let $S_{\varepsilon}\subset\mathbb{A}_{\mathbb{C}}^{3}=\mathrm{Spec}(\mathbb{C}[x,y,z])$
be the smooth complex affine surface with equation 
\[
xy=z^{2}+\varepsilon,\qquad\textrm{where }\varepsilon=\pm1,
\]
endowed with the real structure $\sigma$ defined as the composition
of the involution $(x,y,z)\mapsto(y,x,z)$ with the complex conjugation.
The effective $\mathbb{G}_{m,\mathbb{C}}$-action $\mu$ on $S_{\varepsilon}$
given by $t\cdot(x,y,z)=(tx,t^{-1}y,z)$ defines an $\mathbb{S}^{1}$-action
on $(S_{\varepsilon},\sigma)$ whose real quotient morphism coincides
with the projection 
\[
\pi=\mathrm{pr}_{z}:(S_{\varepsilon},\sigma)\rightarrow(C,\tau)=(\mathrm{Spec}(\mathbb{C}[z],\sigma_{\mathbb{A}_{\mathbb{R}}^{1}}).
\]
A corresponding real DPD-pair is for instance $(D,h)=(0,z^{2}+\varepsilon)$
where $0$ denotes the trivial Weil divisor. 

The morphism $(x,y,z)\mapsto(-x,-y,-z)$ defines a fixed point free
real action of $\mathbb{Z}_{2}$ on $(S_{\varepsilon},\sigma)$ commuting
with the $\mathbb{S}^{1}$-action. The quotient surface $\overline{S}_{\varepsilon}=S_{\varepsilon}/\mathbb{Z}_{2}$
is smooth and $\sigma$ descends to a real structure $\overline{\sigma}$
on it. The morphism $\pi$ descends to a real morphism $\overline{\pi}:(\overline{S}_{\varepsilon},\overline{\sigma})\rightarrow(\overline{C},\overline{\tau})=(\mathrm{Spec}(\mathbb{C}[z^{2}]),\sigma_{\mathbb{A}_{\mathbb{R}}^{1}})$
which coincides with the real quotient morphism of the induced $\mathbb{S}^{1}$-action
on $\overline{S}_{\varepsilon}$. 

1) If $\varepsilon=1$, then since $z^{2}+1=(z-i)(z+i)=f\tau^{*}f$,
it follows from Theorem \ref{thm:MainThm} 1) and Lemma \ref{lem:S1-torsors}
that $\pi:(S_{1},\sigma)\rightarrow(C,\tau)$ restricts to the trivial
$\mathbb{S}^{1}$-torsor over the principal real affine open subset
$(C_{h}=\mathrm{Spec}(\mathbb{C}[z]_{z^{2}+1}),\tau|_{C_{h}})$ of
$C$. In particular, for every real point $c$ of $(C,\tau)$, $(\pi^{-1}(c),\sigma|_{\pi^{-1}(c)})$
is isomorphic to $\mathbb{S}^{1}$ on which $\mathbb{S}^{1}$-acts
by translations.  

Since the real point $0\in(C,\tau)$ is a fixed point of the $\mathbb{Z}_{2}$-action
on $C$, the fiber of $\overline{\pi}:(\overline{S}_{1},\overline{\sigma})\rightarrow(\overline{C},\overline{\tau})$
over the real point $0\in(\overline{C},\overline{\tau})$ has multiplicity
two. When endowed with its reduced structure, it is isomorphic to
the quotient of $\mathrm{Spec}(\mathbb{C}[x,y]/(xy-1))$ by the involution
$(x,y)\mapsto(-x,-y)$, hence to $\mathbb{A}_{*}^{1}=\mathrm{Spec}(\mathbb{C}[w^{\pm1}])$,
where $w=x^{2}$. The real structure is given by the composition of
the involution $w\mapsto w^{-1}$ with the complex conjugation, and
the group $\mathbb{G}_{m,\mathbb{C}}$ acts on it by $t\cdot w=t^{2}w$.
So $(\overline{\pi}^{-1}(0)_{\mathrm{red}},\overline{\sigma}|_{\overline{\pi}^{-1}(0)_{\mathrm{red}}})$
is isomorphic to $\mathbb{S}^{1}$ on which $\mathbb{S}^{1}$ acts
with stabilizer $\mu_{2}$. 

2) If $\varepsilon=-1$, then, in contrast with the previous case,
there is no rational function $f\in\mathbb{C}(z)$ such that $z^{2}-1=f\tau^{*}f$.
Consequently, there is no real open subset of $(C,\tau)$ over which
$\pi:(S_{-1},\sigma)\rightarrow(C,\tau)$ restricts to the trivial
$\mathbb{S}^{1}$-torsor. For a real point $c$ of $(C,\tau)$, $h=z^{2}-1$
takes negative value at $c$ if $c\in]-1,1[$ and positive value if
$c\in]-\infty,-1[\cup]1,+\infty[$. The fiber $(\pi^{-1}(c),\sigma|_{\pi^{-1}(c)})$
is thus isomorphic to the nontrivial $\mathbb{S}^{1}$-torsor $\hat{\mathbb{S}}^{1}$
of Example \ref{exa:S1-torsor-pt} in the first case, and to the trivial
$\mathbb{S}^{1}$-torsor $\mathbb{S}^{1}$ in the second case. 

The fiber of $\pi$ over the point $\pm1$ is isomorphic to $\mathrm{Spec}(\mathbb{C}[x,y]/(xy))$
and thus consists of two affine lines $\overline{O}^{+}=\mathrm{Spec}(\mathbb{C}[x])$
and $\overline{O}^{-}=\mathrm{Spec}(\mathbb{C}[y])$ exchanged by
the real structure $\sigma$, intersecting at the real point $p=(0,0,0)$
of $(S_{-1},\sigma)$. The curves $O^{\pm}=\overline{O}^{\pm}\setminus\{p\}\cong\mathbb{A}_{\mathbb{C}}^{1}\setminus\{0\}$
endowed with the induced $\mathbb{G}_{m,\mathbb{C}}$-actions are
trivial $\mathbb{G}_{m,\mathbb{C}}$-torsors and $p$ is an $\mathbb{S}^{1}$-fixed
point. 

As in the previous case, since the real point $0\in(C,\tau)$ is a
fixed point of the $\mathbb{Z}_{2}$-action on $C$, the fiber of
$\overline{\pi}:(\overline{S}_{-1},\overline{\sigma})\rightarrow(\overline{C},\overline{\tau})$
over the real point $0\in(\overline{C},\overline{\tau})$ has multiplicity
two. When endowed with its reduced structure, it is isomorphic to
the quotient of $\mathrm{Spec}(\mathbb{C}[x,y]/(xy+1))$ by the involution
$(x,y)\mapsto(-x,-y)$ hence to $\mathbb{A}_{*}^{1}=\mathrm{Spec}(\mathbb{C}[w^{\pm1}])$,
where $w=x^{2}$. The induced real structure is the composition of
the involution $w\mapsto w^{-1}$ with the complex conjugation. The
induced $\mathbb{G}_{m,\mathbb{C}}$-action is given by $t\cdot w=t^{2}w$.
Thus $(\overline{\pi}^{-1}(0)_{\mathrm{red}},\overline{\sigma}|_{\overline{\pi}^{-1}(0)_{\mathrm{red}}})$
is isomorphic to $\mathbb{S}^{1}$ on which $\mathbb{S}^{1}$-acts
with stabilizer $\mu_{2}$. 
\end{example}

\begin{lem}
\label{lem:Local-Reduction}Let $(S,\sigma)$ be a smooth real affine
surface with an effective $\mathbb{S}^{1}$-action $\mu:\mathbb{G}_{m,\mathbb{C}}\times S\rightarrow S$
determined by a regular real DPD-pair $(D,h)$ on a smooth real affine
curve $(C,\tau)$, and let $\pi:(S,\sigma)\rightarrow(C,\tau)$ be
the corresponding real quotient morphism. Then for every real point
$c$ of $(C,\tau)$ there exists a principal real affine open neighborhood
$(U,\tau|_{U})$ of $c$ and a regular real DPD-pair $(D',h')$ on
$(U,\tau|_{U})$ with the following properties:

1) $D'|_{U\setminus\{c\}}$ is the trivial divisor, $D'(c)\in[0,1[$
and $h'\in\Gamma(U,\mathcal{O}_{U})\cap\Gamma(U\setminus\{c\},\mathcal{O}_{U\setminus\{c\}}^{*})$. 

2) The surface $(\pi^{-1}(U),\sigma|_{\pi^{-1}(U)})$ is $\mathbb{S}^{1}$-equivariantly
isomorphic to that determined by the real DPD-pair $(D',h')$ on $(U,\tau|_{U})$. 

In particular, $\pi|_{\pi^{-1}(U\setminus\{c\})}:(\pi^{-1}(U\setminus\{c\}),\sigma|_{\pi^{-1}(U\setminus\{c\})})\rightarrow(U\setminus\{c\},\tau|_{U\setminus\{c\}})$
is an $\mathbb{S}^{1}$-torsor.
\end{lem}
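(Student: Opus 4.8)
The plan is to pass to a small $\tau$-invariant neighbourhood of $c$ over which $\pi$ becomes transparent, and to put the restricted DPD-pair into the asserted normal form by a transformation of the kind described in Theorem \ref{thm:MainThm} 1). Recall that for a rational function $f$ on $C$ the pair $(D+\mathrm{div}(f),(f\tau^{*}f)h)$ is again a real DPD-pair determining the same $\mathbb{S}^{1}$-surface up to equivariant isomorphism: indeed $D+\mathrm{div}(f)+\tau^{*}(D+\mathrm{div}(f))-\mathrm{div}((f\tau^{*}f)h)=D+\tau^{*}D-\mathrm{div}(h)$, so the defining inequality is preserved. I will choose $f$ to kill the integral part of $D(c)$ and to clear the zeros and poles of $h$ away from $c$.

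To build the neighbourhood, note that $c$ is real, so $\tau(c)=c$, and fix $t\in A_{0}$ with $\mathrm{ord}_{c}t=1$. The supports of $D$, of $\mathrm{div}(h)$ and the zeros of $t$ and of $\tau^{*}t$ other than $c$ form, together with their $\tau$-images, a finite $\tau$-invariant set $F$ not containing $c$. Choosing $g\in A_{0}$ that vanishes on $F$ with $g(c)\neq0$ and replacing it by $g\,\tau^{*}g$, I obtain a real $g$ and set $U=C_{g}$, a $\tau$-invariant principal affine open neighbourhood of $c$ inside $C\setminus F$. On $U$ the divisor $D$ is supported at most at $c$ and $h$ is invertible on $U\setminus\{c\}$. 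Now put $n=\lfloor D(c)\rfloor$, $f=t^{-n}$, and define $D'=(D+\mathrm{div}(f))|_{U}$ and $h'=((f\tau^{*}f)h)|_{U}$. Then $\mathrm{ord}_{c}f=-n$, so $D'(c)=D(c)-n\in[0,1[$, while $D'$ is trivial on $U\setminus\{c\}$ because $f$ is a unit there. The function $h'$ is automatically real, since $\tau^{*}(f\tau^{*}f\,h)=f\tau^{*}f\,h$ as $\tau$ is an involution and $h$ is real; in particular $f$ need not be chosen real. On $U\setminus\{c\}$ it is a product of units, and at $c$ we have $\mathrm{ord}_{c}h'=\mathrm{ord}_{c}h+2\,\mathrm{ord}_{c}f=\mathrm{ord}_{c}h-2n\geq0$, the inequality coming from the DPD relation read at $c$, namely $2D(c)\leq\mathrm{ord}_{c}h$ and hence $2n\leq2D(c)\leq\mathrm{ord}_{c}h$. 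This gives $h'\in\Gamma(U,\mathcal{O}_{U})\cap\Gamma(U\setminus\{c\},\mathcal{O}_{U\setminus\{c\}}^{*})$ and establishes 1).

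For 2), the DPD-construction is compatible with restriction to the open subset $U$, so the surface attached to $(D|_{U},h|_{U})$ on $(U,\tau|_{U})$ is $(\pi^{-1}(U),\sigma|_{\pi^{-1}(U)})$. Applying Theorem \ref{thm:MainThm} 1) over $(U,\tau|_{U})$ with $\psi=\mathrm{id}$ and the rational function $f$ identifies this surface $\mathbb{S}^{1}$-equivariantly with the one determined by $(D',h')$, which proves 2). Since $\pi^{-1}(U)$ is open in the smooth surface $S$ it is smooth, and therefore $(D',h')$ is a regular real DPD-pair by Theorem \ref{thm:MainThm} 2). Finally, on $U\setminus\{c\}$ the divisor $D'$ is trivial and $h'$ is a unit, so $D'+\tau^{*}D'=0=\mathrm{div}(h')$; the resulting pair $(\mathcal{O}_{U\setminus\{c\}},h'|_{U\setminus\{c\}})$ corresponds through Lemma \ref{lem:S1-torsors} to an $\mathbb{S}^{1}$-torsor structure on $\pi|_{\pi^{-1}(U\setminus\{c\})}$, as asserted.

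The step I expect to require the most care is the joint choice of $U$ and $f$: one must isolate $c$ in the supports of $D$, $\tau^{*}D$ and $\mathrm{div}(h)$ while keeping $U$ both $\tau$-invariant and principal. The observation that makes the normalisation go through cleanly is that $f$ may be taken non-real, the reality of $h'$ being forced by that of $h$, combined with the use of the DPD inequality at the real point $c$ to guarantee $\mathrm{ord}_{c}h'\geq0$.
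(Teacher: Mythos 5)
Your proof is correct and takes essentially the same route as the paper's: restrict to a real principal affine open neighbourhood isolating $c$ from the supports of $D$ and $\mathrm{div}(h)$ (the paper does this by homogeneous localization of the graded ring at a real $g$), then apply Theorem \ref{thm:MainThm} 1) with the divisor of a uniformizer at $c$ to bring $D(c)$ into $[0,1[$, deduce $h'\in\Gamma(U,\mathcal{O}_{U})$ from the DPD inequality at the real point $c$, and invoke Lemma \ref{lem:S1-torsors} over $U\setminus\{c\}$. The only cosmetic differences are that the paper shrinks $U$ so that $c=\mathrm{div}(f)$ for a \emph{real} regular function $f$ whereas you use a possibly non-real uniformizer $t$ and note that reality of $h'$ is automatic, and that you obtain regularity of $(D',h')$ from smoothness of $\pi^{-1}(U)$ via Theorem \ref{thm:MainThm} 2) rather than from invariance of regularity under the twist.
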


\begin{proof}
Recall that by the construction described in $\S$ \ref{subsec:DPD-2-Surface},
we have 
\[
\Gamma(S,\mathcal{O}_{S})=A_{0}[D_{+},D]=\bigoplus_{m<0}\Gamma(C,\mathcal{O}_{C}(-mD_{-}))\oplus\Gamma(C,\mathcal{O}_{C})\oplus\bigoplus_{m>0}\Gamma(C,\mathcal{O}_{C}(mD_{+}))
\]
where $A_{0}=\Gamma(C,\mathcal{O}_{C})$, $D_{+}=D$ and $D_{-}=\tau^{*}D-\mathrm{div}(h)$.
Let $U=C_{g}$ be a real principal affine open neighborhood of $c$
for some real regular function $g$ on $(C,\tau)$, and let $(S|_{U},\sigma|_{U})=(\pi^{-1}(U),\sigma|_{\pi^{-1}(U)})$
be endowed with the induced $\mathbb{S}^{1}$-action. The graded coordinate
ring of $S|_{U}$ is isomorphic to the homogeneous localization 
\[
\Gamma(S,\mathcal{O}_{S})_{(g)}\cong\bigoplus_{m<0}\Gamma(U,\mathcal{O}_{C}(-mD_{-}))\oplus\Gamma(U,\mathcal{O}_{U})\oplus\bigoplus_{m>0}\Gamma(U,\mathcal{O}_{C}(mD_{+}))
\]
of $\Gamma(S,\mathcal{O}_{S})$ with respect to $g\in\Gamma(C,\mathcal{O}_{C})$.
It follows that $(S|_{U},\sigma|_{U})$ is $\mathbb{S}^{1}$-equivariantly
isomorphic to the real affine surface determined by the real DPD-pair
$(D|_{U},h|_{U})$ on the smooth real affine curve $(U,\tau|_{U})$.
For a small enough such real affine neighborhood $U$ of $c$, we
have $D(c')=0$ for every $c'\in U\setminus\{c\}$ and $h\in\Gamma(U\setminus\{c\},\mathcal{O}_{U\setminus\{c\}}^{*})$.
In particular, $D|_{U\setminus\{c\}}$ is a principal Cartier divisor
such that $D|_{U\setminus\{c\}}+\tau|_{U\setminus\{c\}}^{*}D|_{U\setminus\{c\}}=\mathrm{div}(h|_{U\setminus\{c\}})$,
which implies by Lemma \ref{lem:S1-torsors} that $\pi:(S|_{U\setminus\{c\}},\sigma|_{U\setminus\{c\}})\rightarrow(U\setminus\{c\},\tau|_{U\setminus\{c\}})$
is an $\mathbb{S}^{1}$-torsor. Shrinking $U$ further if necessary,
we can ensure in addition that $c=\mathrm{div}(f)$ for some real
regular function on $(U,\tau|_{U})$. Letting $\delta=\left\lfloor D(c)\right\rfloor $
be the round-down of the rational number $D(c)$, it follows from
Theorem \ref{thm:MainThm} 1) that $(S|_{U},\sigma|_{U})$ is $\mathbb{S}^{1}$-equivariantly
isomorphic to the surface determined by the regular real DPD-pair
\[
(D',h')=(D|_{U}-\mathrm{div}(f^{\delta}),(f^{-\delta}\tau^{*}f^{-\delta})h),
\]
on $(U,\tau|_{U})$. By construction, we have $D'=(D(c)-\delta)\{c\}$
where $D(c)-\delta\in[0,1[$ and $h'\in\Gamma(U\setminus\{c\},\mathcal{O}_{U\setminus\{c\}}^{*})$.
Since $(D',h')$ is a real DPD-pair, 
\[
\mathrm{ord}_{c}(h')\geq D'(c)+\tau^{*}(D')(c)=2D'(c)\geq0,
\]
which implies that $h'\in\Gamma(U,\mathcal{O}_{U})\cap\Gamma(U\setminus\{c\},\mathcal{O}_{U\setminus\{c\}}^{*})$. 
\end{proof}
\begin{defn}
Let $(S,\sigma)$ be a smooth real affine surface with an effective
$\mathbb{S}^{1}$-action $\mu:\mathbb{G}_{m,\mathbb{C}}\times S\rightarrow S$.
A $\mathbb{G}_{m,\mathbb{C}}$-orbit $Z$ is called \emph{principal}
if $Z$ endowed with the $\mathbb{G}_{m,\mathbb{C}}$-action induced
by $\mu$ is the trivial $\mathbb{G}_{m,\mathbb{C}}$-torsor. It is
called \emph{exceptional} otherwise. If $Z$ is in addition irreducible
and $\sigma$-invariant, we say that $(Z,\sigma|_{Z})$ is a principal
$\mathbb{S}^{1}$-orbit if $Z$ is a principal $\mathbb{G}_{m,\mathbb{C}}$-orbit,
and an exceptional $\mathbb{S}^{1}$-orbit otherwise. 
\end{defn}

\begin{thm}
\label{prop:Real-Except-Orb-1} Let $(S,\sigma)$ be a smooth real
affine surface with an effective $\mathbb{S}^{1}$-action $\mu:\mathbb{G}_{m,\mathbb{C}}\times S\rightarrow S$
determined by a regular real DPD-pair $(D,h)$ on a smooth real affine
curve $(C,\tau)$. Let $\pi:(S,\sigma)\rightarrow(C,\tau)$ be the
corresponding real quotient morphism and let $c$ be a real point
of $(C,\tau)$. Then exactly one of the following three possibilities
occurs:

\begin{enumerate}[label=\alph*)]

\item $D(c)\in\mathbb{Z}$ and $\mathrm{ord}_{c}(h)=2D(c)$. In this
case, there exists a real affine open neighborhood $(U,\tau|_{U})$
of $c$ such that $\pi|_{\pi^{-1}(U)}:(\pi^{-1}(U),\sigma|_{\pi^{-1}(U)})\rightarrow(U,\tau|_{U})$
is an $\mathbb{S}^{1}$-torsor. The fiber $(\pi^{-1}(c),\sigma|_{\pi^{-1}(c)})$
is an $\mathbb{S}^{1}$-torsor over $(c,\sigma_{\mathrm{Spec}(\mathbb{R})})$
which is either isomorphic to $\mathbb{S}^{1}$ if $\pi^{-1}(c)$
contains a real point of $(S,\sigma)$, or to the nontrivial $\mathbb{S}^{1}$-torsor
$\hat{\mathbb{S}}^{1}$ of Example \ref{exa:S1-torsor-pt} otherwise.
\\

\item $D(c)\in\frac{1}{2}\mathbb{Z}\setminus\mathbb{Z}$ and $\mathrm{ord}_{c}(h)=2D(c)$.
In this case, $(\pi^{-1}(c),\sigma|_{\pi^{-1}(c)})$ is a multiple
fiber of multiplicity $2$, whose reduction is an exceptional $\mathbb{S}^{1}$-orbit,
isomorphic to $\mathbb{S}^{1}$ on which $\mathbb{S}^{1}$ acts with
stabilizer $\mu_{2}$. \\

\item $D(c)\in\mathbb{Z}$ and $\mathrm{ord}_{c}(h)=2D(c)+1$. In
this case, the fiber $\pi^{-1}(c)$ is reduced, consisting of the
closures of two principal $\mathbb{G}_{m,\mathbb{C}}$ orbits $O^{+}$
and $O^{-}$ exchanged by the real structure $\sigma$, whose closures
$\overline{O}^{\pm}$ in $S$ are affine lines intersecting transversally
at an $\mathbb{S}^{1}$-fixed real point $p$ of $(S,\sigma)$. \\

\end{enumerate}

Furthermore, in cases b) and c), for every real affine open neighborhood
$(U,\tau|_{U})$ of $c\in C$, the restriction 
\[
\pi|_{\pi^{-1}(U\setminus\{c\})}:(\pi^{-1}(U\setminus\{c\}),\sigma|_{\pi^{-1}(U\setminus\{c\})})\rightarrow(U\setminus\{c\},\tau|_{U\setminus\{c\}})
\]
is a nontrivial $\mathbb{S}^{1}$-torsor.
\end{thm}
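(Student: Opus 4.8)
The plan is to reduce everything to the explicit local models furnished by Lemma \ref{lem:Local-Reduction}, where the arithmetic becomes transparent, and then to read off the geometry of each model. First I replace $(D,h)$ by the equivalent regular pair $(D',h')$ on a small real affine neighborhood $(U,\tau|_U)$ of $c$ given by that lemma, so that $D'$ is supported at $c$ with $a:=D'(c)\in[0,1)$ and $h'$ is regular on $U$ and invertible on $U\setminus\{c\}$; set $n:=\mathrm{ord}_c(h')\in\mathbb{Z}_{\geq0}$. Since $c$ is real, $\tau(c)=c$ and $\tau^*D'=D'$, so the DPD inequality reads $2a\leq n$ and $D_-'(c)=a-n$. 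As $(S,\sigma)$ is smooth the pair $(D',h')$ is regular, so whenever $2a<n$ the numbers $a$ and $a-n$ must form a regular pair. Writing $a=p/q$ in lowest terms, the integer $|p_1q_2-p_2q_1|$ defining a regular pair equals $nq^{2}$, which can equal $1$ only for $q=n=1$, i.e. $a=0$ and $n=1$. When instead $2a=n$ no condition is imposed, and since $a\in[0,1)$ this forces $(a,n)\in\{(0,0),(1/2,1)\}$. Hence the only regular local models are $(a,n)\in\{(0,0),(1/2,1),(0,1)\}$, and substituting $D(c)=\lfloor D(c)\rfloor+a$ and $\mathrm{ord}_c(h)=2\lfloor D(c)\rfloor+n$ yields exactly the three mutually exclusive alternatives a), b), c).

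Next I treat the geometry model by model. In case a) one has $D'=0$ and $h'$ a unit on all of $U$, so $D'$ is Cartier with $D'+\tau^*D'=0=\mathrm{div}(h')$; by Lemma \ref{lem:S1-torsors} the morphism $\pi^{-1}(U)\to U$ is an $\mathbb{S}^1$-torsor, whose fiber over the real point $c$ is an $\mathbb{S}^1$-torsor over $\mathrm{Spec}(\mathbb{R})$. By the dichotomy of Example \ref{exa:S1-torsor-pt} it is isomorphic to $\mathbb{S}^1$ or to $\hat{\mathbb{S}}^1$ according to whether $\pi^{-1}(c)$ carries a real point; concretely the degree-one generator satisfies $\sigma^*x=h'(c)\,x^{-1}$ on the fiber, so the two cases correspond to the sign of the nonzero real number $h'(c)$. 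In case c), after shrinking $U$ I may take $h'=f$ for a real local equation $f$ of $c$, so the model is the surface $\{xy=f\}$ of Example \ref{exa:Reducible-Real-Fiber}; viewing $f$ as a local coordinate $s$ exhibits $\pi^{-1}(U)$ as the smooth hypersurface $\{xy=s\}$, whose fiber over $c$ is the reduced union of the two coordinate lines $\overline{O}^{\pm}$, each an affine line, meeting transversally at the $\mathbb{S}^1$-fixed real point $p=(0,0)$, with $\sigma$ exchanging them since $\sigma^*x=y$.

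The subtler model is case b), where $D'=\frac{1}{2}\{c\}$ and $n=1$, so $D_-'=-\frac{1}{2}\{c\}$. The complex surface and its fiber depend only on $D'_{\pm}$ and coincide with those of the representative $\{xy^{2}=f\}$ of \S \ref{subsec:Surface-2-DPD}, on which $x$ is invertible; hence the scheme-theoretic fiber over $c$ is cut out by $y^{2}=0$, nonreduced of multiplicity $2$, with reduction the single orbit $\mathrm{Spec}(\mathbb{C}[w^{\pm1}])$ (where $w$ is the degree-two generator, of value $f^{-1}$) on which $\mathbb{G}_{m,\mathbb{C}}$ acts by $t\cdot w=t^{2}w$, so with stabilizer $\mu_2$. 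To identify the real form I use that $\sigma^*w=\tau_2^*(w)=(h')^{2}\tau^*(f^{-1})$, which restricted to the orbit equals $\lambda\,w^{-1}$ with $\lambda=(h'/f)^2(c)$. Writing $h'=f\,u_0$ with $u_0$ a real unit gives $\lambda=u_0(c)^{2}>0$, so after rescaling $w$ the real structure becomes $\sigma^*w=w^{-1}$ and the reduced orbit is the split form $\mathbb{S}^1$, never $\hat{\mathbb{S}}^1$. I expect this positivity of the twisting scalar—forced by the weight being even, which makes the factor $(h')^{2}$ a perfect square—to be the main point requiring care, since it is exactly what distinguishes b) from a).

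Finally, for the last assertion I argue by parity. In both cases b) and c) the restriction over $U\setminus\{c\}$ is an $\mathbb{S}^1$-torsor classified by the pair $(\mathcal{O},h'|_{U\setminus\{c\}})$, which by Lemma \ref{lem:S1-torsors} is trivial precisely when $h'|_{U\setminus\{c\}}=g\,\tau^*g$ for some $g\in\Gamma(U\setminus\{c\},\mathcal{O}^*)$. Any such $g$, regarded as a rational function on $U$, would give $\mathrm{ord}_c(h')=\mathrm{ord}_c(g\,\tau^*g)=2\,\mathrm{ord}_c(g)$, an even integer; but $\mathrm{ord}_c(h')=n=1$ is odd, a contradiction. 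Hence the torsor is nontrivial over every neighborhood of $c$, as claimed.
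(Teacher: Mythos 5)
Your proposal is correct, and it shares the paper's overall skeleton: reduction to the normalized local pair $(D',h')$ of Lemma \ref{lem:Local-Reduction}, the arithmetic trichotomy $(D'(c),\mathrm{ord}_{c}(h'))\in\{(0,0),(\tfrac12,1),(0,1)\}$ forced by the DPD inequality plus regularity (your explicit computation $|p_{1}q_{2}-p_{2}q_{1}|=nq^{2}$ just makes precise what the paper asserts), case a) via Lemma \ref{lem:S1-torsors} and Example \ref{exa:S1-torsor-pt}, and the identical parity argument for the final nontriviality claim. Where you genuinely diverge is in the one delicate step, case b). The paper, after citing \cite[Theorem 18 (a)]{FZ03} for the complex structure $\pi^{-1}(c)=2Z$, rules out $(Z,\sigma|_{Z})\cong\hat{\mathbb{S}}^{1}$ by a descent argument: it forms the real double cover $\tilde{U}\rightarrow U$ totally ramified at $c$, normalizes $S\times_{U}\tilde{U}$ to get a surface with a free real $\mathbb{Z}_{2}$-action whose fiber over $\tilde{c}$ is a \emph{reduced} $\mathbb{S}^{1}$-torsor, identifies the $\mathbb{Z}_{2}$-action on that fiber as $u\mapsto-u$, and observes that $w=u^{2}$ then inherits the split structure $w\mapsto w^{-1}$ from either possible real form upstairs. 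You instead compute the descent datum directly in the DPD formalism: the graded algebra of $(D'_{+},D'_{-})=(\tfrac12\{c\},-\tfrac12\{c\})$ is generated by $w=f^{-1}$ in degree $2$ and $f$ in degree $-1$ (so the model is $\{xy^{2}=f\}$ with $x$ invertible, and the multiplicity-$2$ fiber with $\mu_{2}$-stabilized reduction is immediate, without \cite{FZ03}), and the formula $\sigma^{*}=\bigoplus\tau_{m}^{*}$ of $\S$ \ref{subsec:DPD-2-Surface} gives $\sigma^{*}w=(h')^{2}\tau^{*}(f^{-1})=(h'/f)^{2}w^{-1}$, whose restriction to the reduced fiber is $u_{0}(c)^{2}w^{-1}$ with $u_{0}=h'/f$ a real unit; since the twisting scalar is a square of a nonzero real number it is positive, and rescaling yields $\mathbb{S}^{1}$. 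This is shorter, stays entirely inside the algebra, and isolates the actual mechanism (an even-weight generator forces the cocycle to be a real square, which is exactly what distinguishes b) from a), where the scalar $h'(c)$ can have either sign); the paper's cover argument buys independence from any explicit presentation of the graded ring. Similarly, your case c) reads everything off the model $\{xy=h'\}$ of Example \ref{exa:Reducible-Real-Fiber}, with $\sigma$ swapping the two lines because $\sigma^{*}x=y$, in place of the paper's appeal to \cite[Theorem 18 (b)]{FZ03} plus the observation that $\sigma^{*}$ exchanges the positive and negative parts of the grading; both are sound. One point worth making explicit in your write-up: the substitutions $D(c)=\lfloor D(c)\rfloor+a$ and $\mathrm{ord}_{c}(h)=2\lfloor D(c)\rfloor+n$ use the precise relation $(D',h')=(D|_{U}-\mathrm{div}(f^{\delta}),(f^{-\delta}\tau^{*}f^{-\delta})h)$ with $\delta=\lfloor D(c)\rfloor$, which comes from the \emph{proof} of Lemma \ref{lem:Local-Reduction} (or from Theorem \ref{thm:MainThm} 1) applied over $U$), not from its statement alone; this is harmless but should be said.
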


\begin{proof}
By Lemma \ref{lem:Local-Reduction}, there exists a real affine open
neighborhood $(U,\tau|_{U})$ of $c$ such that $c=\mathrm{div}(f)$
for some real regular function $f$ on $(U,\tau|_{U})$ and such that
$(S|_{U},\sigma|_{U})$ is $\mathbb{S}^{1}$-equivariantly isomorphic
over $(U,\tau|_{U})$ to the real affine surface determined by a regular
real DPD-pair $(D',h')$ such that $D'=D'(c)c$ where $D'(c)\in[0,1[$
and $h'\in\Gamma(U,\mathcal{O}_{U})\cap\Gamma(U\setminus\{c\},\mathcal{O}_{U\setminus\{c\}}^{*})$.
Since $D'(c)+\tau^{*}D'(c)\leq\mathrm{ord}_{c}(h')$ by definition
of a real DPD-pair, this leads to the following dichotomy:

I) If $2D'(c)=\mathrm{ord}_{c}(h')$ then since $D'(c)\in[0,1[$ and
$\mathrm{ord}_{c}(h')$ is an integer, we have either $D'(c)=0$ and
$\mathrm{ord}_{c}(h')=0$ or $D'(c)=\frac{1}{2}$ and $\mathrm{ord}_{c}(h')=1$.
In first case, $D'$ is the trivial divisor, and so is $\tau^{*}D'$.
Furthermore, since $h'$ does not vanish on $U$, $\pi|_{\pi^{-1}(U)}:(S|_{U},\sigma|_{U})=(\pi^{-1}(U),\sigma|_{\pi^{-1}(U)})\rightarrow(U,\tau|_{U})$
is an $\mathbb{S}^{1}$-torsor by Lemma \ref{lem:S1-torsors}. By
Theorem \ref{thm:MainThm} 1) and Example \ref{exa:S1-torsor-pt},
$(\pi^{-1}(c),\sigma|_{\pi^{-1}(c)})$ is isomorphic either to $\mathbb{S}^{1}$
if $h_{c}(c)\in\mathbb{R}_{>0}$, or to $\hat{\mathbb{S}}^{1}$ otherwise.
This yields case a). 

In the second case, we have $D'=\frac{1}{2}\{c\}$ and it follows
from \cite[Theorem 18 (a)]{FZ03} that $\pi^{-1}(c)=2Z$ where $Z$
is an exceptional $\mathbb{G}_{m,\mathbb{C}}$-orbit isomorphic to
a punctured affine line on which $\mathbb{G}_{m,\mathbb{C}}$ acts
with stabilizer $\mu_{2}$. The real curve $(Z,\sigma|_{Z})$ endowed
with the restriction of $\mu$ is thus isomorphic either to $\mathbb{S}^{1}$
if it contains a real point or to $\hat{\mathbb{S}}^{1}$ otherwise,
on which $\mathbb{S}^{1}$ acts with stabilizer $\mu_{2}$. We claim
that the case where $(Z,\sigma|_{Z})$ is isomorphic to $\hat{\mathbb{S}}^{1}$
does not occur. Indeed, the real structure $\tau|_{U}$ on $U$ lifts
in a unique way to a real structure $\tilde{\tau}$ on $\tilde{U}=\mathrm{Spec}(\Gamma(U,\mathcal{O}_{U})[X]/(X^{2}-f))$
such that $\tilde{\tau}^{*}X=X$, for which the morphism $\psi:(\tilde{U},\tilde{\tau})\rightarrow(U,\tau|_{U})$
the induced morphism $\psi:(\tilde{U},\tilde{\tau})\rightarrow(U,\tau|_{U})$
is a real double cover totally ramified over $c$ and \'etale elsewhere.
The normalization over the fiber product $S\times_{U}\tilde{U}$ is
a smooth real affine surface $(\tilde{S},\tilde{\sigma})$ and the
action of the Galois group $\mathbb{Z}_{2}$ of the cover $\psi:(\tilde{U},\tilde{\tau})\rightarrow(U,\tau|_{U})$
lifts to a free real $\mathbb{Z}_{2}$-action on $(\tilde{S},\tilde{\sigma})$
for which we have $(S|_{U},\sigma|_{U})\cong(\tilde{S},\tilde{\sigma})/\mathbb{Z}_{2}$,
the quotient morphism $\Psi:(\tilde{S},\tilde{\sigma})\rightarrow(S|_{U},\sigma|_{U})\cong(\tilde{S},\tilde{\sigma})/\mathbb{Z}_{2}$
being \'etale. The $\mathbb{S}^{1}$-action $\mu$ on $S$ lifts
to an effective $\mathbb{S}^{1}$-action $\tilde{\mu}$ on $(\tilde{S},\tilde{\sigma})$,
whose real quotient morphism $\tilde{\pi}:(\tilde{S},\tilde{\sigma})\rightarrow(\tilde{U},\tilde{\tau})$
is equal to the composition of the normalization morphism $\nu:\tilde{S}\rightarrow S\times_{U}\tilde{U}$
with the projection $\mathrm{pr}_{\tilde{U}}$. Letting $\tilde{c}=\psi^{-1}(c)$,
$\tilde{\pi}^{-1}(\tilde{c})$ is reduced and $(\tilde{\pi}^{-1}(\tilde{c}),\tilde{\sigma}|_{\tilde{\pi}^{-1}(\tilde{c})})$
is an $\mathbb{S}^{1}$-torsor over $(\tilde{c},\sigma_{\mathrm{Spec}(\mathbb{R})})$.
By Example \ref{exa:S1-torsor-pt}, it is isomorphic to $\mathbb{A}_{\mathbb{C}}^{1}\setminus\{0\}=\mathrm{Spec}(\mathbb{C}[u^{\pm1}])$
endowed with a real structure given as the composition of the complex
conjugation either with the involution $u\mapsto u^{-1}$ or with
the involution $u\mapsto-u^{-1}$. Furthermore, the $\mathbb{Z}_{2}$-action
on $\tilde{S}$ restricts to a $\mathbb{G}_{m,\mathbb{C}}$-equivariant
free $\mathbb{Z}_{2}$-action on $\tilde{\pi}^{-1}(\tilde{c})\cong\mathrm{Spec}(\mathbb{C}[u^{\pm1}])$
compatible with the real structure $\tilde{\sigma}|_{\tilde{\pi}^{-1}(\tilde{c})}$.
The latter is thus necessarily given by $u\mapsto-u$, and the quotient
morphism $\Psi$ restricts on $(\tilde{\pi}^{-1}(\tilde{c}),\tilde{\sigma}|_{\tilde{\pi}^{-1}(\tilde{c})})$
to an \'etale double cover $(\tilde{\pi}^{-1}(c),\tilde{\sigma}|_{\tilde{\pi}^{-1}(\tilde{c})})\rightarrow(Z,\sigma|_{Z})$.
We conclude that $Z\cong\mathrm{Spec}(\mathbb{C}[w^{\pm1}])$, where
$w=u^{2}$ and that $\sigma|_{Z}$ is the real structure given as
the composition of the involution $w\mapsto w^{-1}$ with the complex
conjugation, which shows that $(Z,\sigma|_{Z})$ is isomorphic to
$\mathbb{S}^{1}$. Finally, since $\mathrm{ord}_{c}(h')=1$, there
cannot exist any rational function on $U$ such that $h'=g\tau^{*}g$.
It follows that for every real affine open neighborhood $(V,\tau|_{V})$
of $c$ contained in $(U,\tau|_{U})$, the restriction of $\pi$ over
$V\setminus\{c\}$ is a nontrivial $\mathbb{S}^{1}$-torsor. This
yields case b). 

II) Otherwise, if $2D'(c)-\mathrm{ord}_{c}(h)<0$, then since by hypothesis
$(D,h)$ whence $(D',h')$ is a regular real DPD-pair, the rational
numbers $D'(c)$ and $D'(\tau(c))-\mathrm{ord}_{c}(h')=D'(c)-\mathrm{ord}_{c}(h')$
form a regular pair. Since $D'(c)\in[0,1[$ and $\mathrm{ord}_{c}(h')>2D'(c)$
is an integer, the only possibility is that $D'(c)=0$ and $\mathrm{ord}_{c}(h')=1$.
By \cite[Theorem 18 (b)]{FZ03}, the fiber $\pi^{-1}(c)$ is then
reduced consisting of the closure of two principal $\mathbb{G}_{m,\mathbb{C}}$-orbits
$O^{+}$ and $O^{-}$ whose closures $\overline{O}^{\pm}$ in $S$
are affine lines intersecting transversally at a $\mathbb{G}_{m,\mathbb{C}}$-fixed
point $p\in\pi^{-1}(c)$. The defining ideals of $\overline{O}^{+}$
and $\overline{O}^{-}$ in the graded coordinate ring $\Gamma(S|_{U},\mathcal{O}_{S})\otimes_{\Gamma(U,\mathcal{O}_{U})}(\Gamma(U,\mathcal{O}_{U})/f)$
of the scheme theoretic fiber $\pi^{-1}(c)$ are the positive and
negative part respectively. Since by Lemma \ref{lem:Hyperbolic-decomposition},
$\sigma^{*}$ exchanges the positive and negative parts of the grading
of $\Gamma(S|_{U},\mathcal{O}_{S})$, it follows that $\sigma$ exchanges
$\overline{O}^{+}$ and $\overline{O}^{-}$, hence that $p$ is a
$\sigma$-invariant point. As in the previous case, the fact that
$\mathrm{ord}_{c}(h')=1$ implies that for every real affine open
neighborhood $(V,\tau|_{V})$ of $c$ contained in $(U,\tau|_{U})$,
the restriction of $\pi$ over $V\setminus\{c\}$ is a nontrivial
$\mathbb{S}^{1}$-torsor. This yields case c).
\end{proof}
Since the only proper algebraic subgroups of $\mathbb{G}_{m,\mathbb{C}}$
are cyclic groups, the exceptional orbits of a $\mathbb{G}_{m,\mathbb{C}}$-action
are either $\mathbb{G}_{m,\mathbb{C}}$-fixed points or closed curves
isomorphic to the punctured affine line $\mathbb{A}_{\mathbb{C}}^{1}\setminus\{0\}$
on which $\mathbb{G}_{m,\mathbb{C}}$ acts with stabilizer isomorphic
to a cyclic group $\mu_{m}$ of order $m\geq2$. While there exist
smooth complex affine surfaces $S$ endowed with hyperbolic $\mathbb{G}_{m,\mathbb{C}}$-actions
admitting $1$-dimensional exceptional orbits with stabilizers $\mu_{m}$
for every $m\geq2$, for instance $(\mathbb{A}_{\mathbb{C}}^{1}\setminus\{0\})\times\mathbb{A}_{\mathbb{C}}^{1}=\mathrm{Spec}(\mathbb{C}[x^{\pm1},y])$
endowed with the $\mathbb{G}_{m,\mathbb{C}}$-action $t\cdot(x,y)=(t^{m}x,t^{-1}y)$,
the possible types of exceptional $\mathbb{S}^{1}$-orbits on smooth
real affine surfaces with effective $\mathbb{S}^{1}$-actions are
much more restricted: 
\begin{cor}
The exceptional $\mathbb{S}^{1}$-orbits on a smooth real affine surface
$(S,\sigma)$ with an effective $\mathbb{S}^{1}$-action are either
real $\mathbb{S}^{1}$-fixed points or closed curves isomorphic to
$\mathbb{S}^{1}$ on which $\mathbb{S}^{1}$ acts with stabilizer
$\mu_{2}$.
\end{cor}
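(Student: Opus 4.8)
The plan is to read the statement off directly from the trichotomy established in Theorem \ref{prop:Real-Except-Orb-1}, so that the corollary becomes a matter of recording which orbits in that classification are simultaneously exceptional and $\sigma$-invariant. First I would unwind the definitions: an exceptional $\mathbb{S}^{1}$-orbit $(Z,\sigma|_{Z})$ is by definition an irreducible $\sigma$-invariant $\mathbb{G}_{m,\mathbb{C}}$-orbit $Z$ that is not the trivial $\mathbb{G}_{m,\mathbb{C}}$-torsor; by the remark preceding the corollary such a $Z$ is either a $\mathbb{G}_{m,\mathbb{C}}$-fixed point or a curve isomorphic to $\mathbb{A}_{\mathbb{C}}^{1}\setminus\{0\}$ on which $\mathbb{G}_{m,\mathbb{C}}$ acts with stabilizer $\mu_{m}$ for some $m\geq2$. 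The goal is to show that on $(S,\sigma)$ only the two listed possibilities survive.

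The key reduction is to locate $Z$ over a real point of $(C,\tau)$. Since $\pi\colon(S,\sigma)\rightarrow(C,\tau)$ is the quotient morphism and is therefore constant on $\mathbb{G}_{m,\mathbb{C}}$-orbits, $Z$ is contained in a single fiber $\pi^{-1}(c)$, and as $Z$ is $\sigma$-invariant and $\pi$ is a real morphism, the point $c=\pi(Z)$ satisfies $\tau(c)=c$. Hence $c$ is a real point of $(C,\tau)$ and Theorem \ref{prop:Real-Except-Orb-1} applies to it, placing $Z$ inside one of the fibers described in cases a), b), c).

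It then remains to inspect the three cases. In case a) the fiber $\pi^{-1}(c)$ is an $\mathbb{S}^{1}$-torsor, hence a single principal $\mathbb{G}_{m,\mathbb{C}}$-orbit; as $Z$ is exceptional this case cannot occur. In case b) the fiber is a multiple fiber whose reduction is the unique orbit it contains, and that orbit is precisely an exceptional $\mathbb{S}^{1}$-orbit isomorphic to $\mathbb{S}^{1}$ on which $\mathbb{S}^{1}$ acts with stabilizer $\mu_{2}$; thus $Z$ is of the first claimed type. In case c) the two one-dimensional orbits $O^{+}$ and $O^{-}$ are principal and are exchanged by $\sigma$, so neither is individually $\sigma$-invariant; the only $\sigma$-invariant orbit contained in $\pi^{-1}(c)$ is the $\mathbb{S}^{1}$-fixed real point $p$, whence $Z=p$ is a real $\mathbb{S}^{1}$-fixed point. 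This exhausts the possibilities and yields the corollary.

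The only delicate point, which I would state explicitly, is the reduction step asserting that a $\sigma$-invariant orbit necessarily lies over a real point; this is what allows the trichotomy of Theorem \ref{prop:Real-Except-Orb-1}, formulated for real points of $(C,\tau)$, to govern every exceptional $\mathbb{S}^{1}$-orbit. Everything else is bookkeeping, since the structure of each real fiber, including the isomorphism type of the orbit and its stabilizer, has already been determined in that theorem. I therefore expect no substantial obstacle beyond making the $\sigma$-invariance-to-real-point implication precise, and in particular the a priori possible stabilizers $\mu_{m}$ with $m\geq3$ are excluded automatically because they never appear in the list of cases a), b), c).
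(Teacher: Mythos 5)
Your proposal is correct and follows essentially the same route as the paper's proof: both reduce to the observation that a $\sigma$-invariant exceptional orbit $Z$ maps under the real quotient morphism $\pi$ to a $\tau$-invariant, hence real, point $c$ of $(C,\tau)$, and then invoke the trichotomy of Theorem \ref{prop:Real-Except-Orb-1}, with only cases b) and c) able to contain an exceptional $\sigma$-invariant orbit. The paper states this more tersely, while you spell out the case-by-case bookkeeping (exclusion of case a), and in case c) that $O^{\pm}$ are swapped by $\sigma$ so only the fixed point $p$ survives), but the underlying argument is identical.
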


\begin{proof}
Let $\pi:(S,\sigma)\rightarrow(C,\tau)$ be the real quotient morphism
for the given $\mathbb{S}^{1}$-action. The image by $\pi$ of a real
exceptional $\mathbb{S}^{1}$-orbit $(Z,\sigma|_{Z})$ is a $\tau$-invariant
proper closed subset of $C$, which is irreducible since $Z$ is irreducible.
So $\pi(Z)$ is a real point $c$ of $(C,\tau)$. Since $Z$ is an
exceptional $\mathbb{G}_{m,\mathbb{C}}$-orbit, the assertion follows
from Theorem \ref{prop:Real-Except-Orb-1}, cases b) and c). 
\end{proof}
The following proposition records the possible structures of fibers
of the real quotient morphism $\pi:(S,\sigma)\rightarrow(C,\tau)$
over pairs of non-real complex points $q$ and $\tau(q)$ of $C$.
Its proof, which is similar to that of Theorem \ref{prop:Real-Except-Orb-1},
is left to the reader. 
\begin{prop}
\label{rem:Fibers-pairs-conjugate-points} Let $(S,\sigma)$ be the
smooth real affine surface with effective $\mathbb{S}^{1}$-action
$\mu:\mathbb{G}_{m,\mathbb{C}}\times S\rightarrow S$ determined by
a regular real DPD-pair $(D,h)$ on a smooth real affine curve $(C,\tau)$.
Let $\pi:(S,\sigma)\rightarrow(C,\tau)$ be its real quotient morphism,
and let $q$ and $\tau(q)$ be a pair of non-real complex points of
$C$ exchanged by the real structure $\tau$. Then exactly one of
the following possibilities occurs:

\begin{enumerate}

\item $D(q)+D(\tau(q))=\mathrm{ord}_{q}(h)=\mathrm{ord}_{\tau(q)}(h)$
and: 

a) Either $D(q)$ and $D(\tau(q))$ both belong to $\mathbb{Z}$ and
then $\pi^{-1}(q)$ and $\pi^{-1}(\tau(q))$ are principal $\mathbb{G}_{m,\mathbb{C}}$-orbits.
Furthermore, there exists a real affine open neighborhood $(U,\tau|_{U})$
of $q\cup\tau(q)$ such that $\pi|_{\pi^{-1}(U)}:(\pi^{-1}(U),\sigma|_{\pi^{-1}(U)})\rightarrow(U,\tau|_{U})$
is an $\mathbb{S}^{1}$-torsor.

b) Or $D(q)$ and $D(\tau(q))$ both belong to $\mathbb{Q}\setminus\mathbb{Z}$
and then $\pi^{-1}(q)$ and $\pi^{-1}(\tau(q))=\sigma(\pi^{-1}(q))$
are $1$-dimensional exceptional $\mathbb{G}_{m,\mathbb{C}}$-orbits
of multiplicity $m\geq2$ on which $\mathbb{G}_{m,\mathbb{C}}$ acts
with stabilizer $\mu_{m}$. \\

\item$D(q)+D(\tau(q))<\mathrm{ord}_{q}(h)=\mathrm{ord}_{\tau(q)}(h)$.
Then $\pi^{-1}(q)_{\mathrm{red}}=\overline{O}_{q}^{+}\cup\overline{O}_{q}^{-}$,
where $O_{q}^{+}$ and $O_{q}^{-}$ are 1-dimensional $\mathbb{G}_{m,\mathbb{C}}$-orbits
whose closures $\overline{O}_{q}^{\pm}$ in $S$ are affine lines
intersecting transversally at a $\mathbb{G}_{m,\mathbb{C}}$-fixed
point $p$. Furthermore, the fiber $\pi^{-1}(\tau(q))_{\mathrm{red}}=\sigma(\pi^{-1}(q)_{\mathrm{red}})$
is equal to 
\[
\pi^{-1}(\tau(q))_{\mathrm{red}}=\overline{O}_{\tau(q)}^{+}\cup\overline{O}_{\tau(q)}^{-}=\sigma(\overline{O}_{q}^{-})\cup\sigma(\overline{O}_{q}^{+}).
\]

\end{enumerate}
\end{prop}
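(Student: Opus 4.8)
The plan is to reduce everything to the complex structure theory of Flenner--Zaidenberg over the two conjugate points $q$ and $\tau(q)$, and then to read off the real picture from the fact that the real structure $\sigma$ \emph{exchanges} the two corresponding fibers rather than preserving either of them. First I would record that, since $h$ is a real rational function on $(C,\tau)$, we have $\tau^{*}h=h$, so that the integral divisor $\mathrm{div}(h)$ is $\tau$-invariant; in particular $\mathrm{ord}_{q}(h)=\mathrm{ord}_{\tau(q)}(h)$, which accounts for the common value appearing throughout the statement. Writing $D_{+}=D$ and $D_{-}=\tau^{*}D-\mathrm{div}(h)$ as in \S\ref{subsec:DPD-2-Surface}, one has $D_{+}(q)+D_{-}(q)=D(q)+D(\tau(q))-\mathrm{ord}_{q}(h)$, and the defining inequality $D+\tau^{*}D\leq\mathrm{div}(h)$ of a real DPD-pair gives $D_{+}(q)+D_{-}(q)\leq0$. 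This is precisely the dichotomy between case (1) (equality) and case (2) (strict inequality).

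Next I would apply \cite[Theorem 18]{FZ03} to the complex fiber $\pi^{-1}(q)$ of the underlying $\mathbb{G}_{m,\mathbb{C}}$-action, exactly as in the proof of Theorem \ref{prop:Real-Except-Orb-1}. When $D_{+}(q)+D_{-}(q)=0$, note that $D(q)+D(\tau(q))=\mathrm{ord}_{q}(h)\in\mathbb{Z}$, so $D(q)$ and $D(\tau(q))$ are simultaneously integral or simultaneously non-integral; in the integral case $\pi^{-1}(q)$ is a principal orbit, while in the non-integral case it is a one-dimensional exceptional orbit whose multiplicity equals the common denominator $m\geq2$ of $D(q)$ and $D(\tau(q))$, with stabilizer $\mu_{m}$. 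When $D_{+}(q)+D_{-}(q)<0$, the regularity of $(D,h)$ forces $D_{+}(q)$ and $D_{-}(q)$ to be a regular pair (this is where smoothness of $S$, via \cite[Theorem 4.15]{FZ03}, is used), and \cite[Theorem 18]{FZ03} then yields that $\pi^{-1}(q)$ is reduced, equal to the union of the affine-line closures $\overline{O}_{q}^{\pm}$ of two orbits meeting transversally at a $\mathbb{G}_{m,\mathbb{C}}$-fixed point $p$.

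The passage to the real picture is then purely formal. Since $\pi$ is a real morphism, the relation $\pi\circ\sigma=\tau\circ\pi$ gives $\sigma(\pi^{-1}(q))=\pi^{-1}(\tau(q))$, so the fiber over $\tau(q)$ is the $\sigma$-image of that over $q$; in particular the multiplicity and stabilizer at $\tau(q)$ agree with those at $q$, which settles (1a) and (1b). In case (2), Lemma \ref{lem:Hyperbolic-decomposition} shows that $\sigma^{*}$ interchanges the positive and negative graded parts, hence $\sigma$ exchanges the positive and negative orbit closures; this yields the asserted identification $\overline{O}_{\tau(q)}^{\pm}=\sigma(\overline{O}_{q}^{\mp})$. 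Finally, for the torsor statement in (1a), I would choose a $\tau$-invariant principal affine open neighborhood $U$ of the finite set $\{q,\tau(q)\}$ small enough that $D|_{U}$ is integral and supported on $\{q,\tau(q)\}$ and $h|_{U}$ is a unit away from these points; then $D|_{U}$ is Cartier and the equality from case (1) together with the $\tau$-invariance of $\mathrm{div}(h)$ give $D|_{U}+\tau^{*}D|_{U}=\mathrm{div}(h|_{U})$, so that $\pi^{-1}(U)\rightarrow U$ is an $\mathbb{S}^{1}$-torsor by Lemma \ref{lem:S1-torsors}.

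The step requiring the most care is this last one: unlike the single real point treated in Lemma \ref{lem:Local-Reduction}, here the local reduction must be carried out $\tau$-equivariantly over the conjugate pair $\{q,\tau(q)\}$, so one must check that the neighborhood $U$ and the divisor identities can be chosen real. This is, however, straightforward because $\{q,\tau(q)\}$ is already a $\tau$-invariant reduced closed subscheme; no double-cover argument of the kind needed in case (b) of Theorem \ref{prop:Real-Except-Orb-1} is required. The essential simplification throughout is precisely that $\sigma$ swaps the two conjugate fibers rather than preserving a single one, which removes any $\mathbb{S}^{1}$ versus $\hat{\mathbb{S}}^{1}$ ambiguity and reduces the analysis of the real locus to the complex computation over $q$ alone.
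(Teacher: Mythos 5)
Your overall strategy is exactly the one the paper intends (it states that the proof is ``similar to that of Theorem \ref{prop:Real-Except-Orb-1}'' and leaves it to the reader): the dichotomy is read off from $D_{+}(q)+D_{-}(q)=D(q)+D(\tau(q))-\mathrm{ord}_{q}(h)\leq0$, the complex fiber structure comes from \cite[Theorem 18]{FZ03}, the real statements follow because $\sigma$ maps $\pi^{-1}(q)$ onto $\pi^{-1}(\tau(q))$ and, by Lemma \ref{lem:Hyperbolic-decomposition}, exchanges the positive and negative graded parts, and the torsor claim in (1a) follows from a real Cartier localization and Lemma \ref{lem:S1-torsors}. Your observation that the conjugate-pair situation removes the need for the double-cover argument and the $\mathbb{S}^{1}$ versus $\hat{\mathbb{S}}^{1}$ discussion is also correct and is indeed the essential simplification.

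There is, however, one genuinely false step: in case (2) you assert that \cite[Theorem 18]{FZ03} yields that $\pi^{-1}(q)$ \emph{is reduced}. Regularity of the pair $(D_{+}(q),D_{-}(q))$ is equivalent to smoothness of $S$ along $\pi^{-1}(q)$, but it does not force the denominators of $D_{\pm}(q)$ to equal $1$; the scheme-theoretic fiber is $m_{+}\overline{O}_{q}^{+}+m_{-}\overline{O}_{q}^{-}$, where $m_{\pm}$ is the denominator of $D_{\pm}(q)$, and the orbits $O_{q}^{\pm}$ have stabilizers $\mu_{m_{\pm}}$. Concretely, take $(C,\tau)=(\mathbb{A}_{\mathbb{C}}^{1},\sigma_{\mathbb{A}_{\mathbb{R}}^{1}})$, $q=i$, $D=-\frac{1}{2}\{i\}-\frac{1}{3}\{-i\}$ and $h=1$: then $D+\tau^{*}D=-\frac{5}{6}(\{i\}+\{-i\})\leq0=\mathrm{div}(h)$, and at $q=i$ the rational numbers $D(i)=-\frac{1}{2}$ and $D(-i)-\mathrm{ord}_{i}(h)=-\frac{1}{3}$ form a regular pair since $\left|(-1)\cdot3-(-1)\cdot2\right|=1$; so $(D,h)$ is a regular real DPD-pair and the associated surface is smooth, yet the fiber over $i$ has multiplicities $2$ and $3$ along its two components. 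This cannot be repaired by normalizing: replacing $(D,h)$ by $(D+\mathrm{div}(f),(f\tau^{*}f)h)$ shifts $D_{+}(q)$ and $D_{-}(q)$ by opposite integers, so their fractional parts are invariants. This is precisely where the conjugate-pair case differs from the real-point case of Theorem \ref{prop:Real-Except-Orb-1}: there, the normalization $D'(c)\in[0,1[$ of Lemma \ref{lem:Local-Reduction}, combined with $\tau$-invariance at the real point $c$ and regularity, forces $D'(c)=0$ and $\mathrm{ord}_{c}(h')=1$, hence a reduced fiber with principal orbits; over a non-real pair no such constraint exists, which is exactly why the Proposition is stated in terms of $\pi^{-1}(q)_{\mathrm{red}}$ and speaks of ``$1$-dimensional orbits'' rather than principal ones. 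Since the statement you are proving only concerns the reduced fiber, your argument does prove it once the word ``reduced'' (and any implicit claim that $O_{q}^{\pm}$ are principal) is deleted; but as written, that intermediate assertion is wrong.
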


\begin{example}
Let $(S_{\varepsilon},\sigma)$, where $\varepsilon=\pm1$, be the
smooth real affine surface with equation $xy=\varepsilon(z^{2}+1)$
in $\mathbb{A}_{\mathbb{C}}^{3}=\mathrm{Spec}(\mathbb{C}[x,y,z])$
endowed with the real structure given by the composition of the involution
$(x,y,z)\mapsto(y,x,z)$ with the complex conjugation. The $\mathbb{G}_{m,\mathbb{C}}$-action
$\mu$ on $S$ given by $t\cdot(x,y,z)=(tx,t^{-1}y,z)$ defines a
real action of $\mathbb{S}^{1}$ on $(S_{\varepsilon},\sigma)$. The
categorical quotient for the $\mathbb{G}_{m,\mathbb{C}}$-action is
the affine line $C=\mathrm{Spec}(A_{0})$, where $A_{0}=\mathbb{C}[xy,z]/(xy-\varepsilon(z^{2}+1))\cong\mathbb{C}[z]$,
and the quotient morphism $\pi:S_{\varepsilon}\rightarrow C$ is a
real morphism for the real structures $\sigma$ and $\tau=\sigma_{\mathbb{A}_{\mathbb{R}}^{1}}$
on $S_{\varepsilon}$ and $C$ respectively. The decomposition of
the coordinate ring $A_{\varepsilon}$ of $S_{\varepsilon}$ into
semi-invariant subspaces if given by 
\begin{align*}
A_{\varepsilon} & =\bigoplus_{m\in\mathbb{Z}}A_{\varepsilon,m}=\bigoplus_{m<0}A_{0}\cdot y^{-m}\oplus A_{0}\oplus\bigoplus_{m>0}A_{0}\cdot x^{m}\\
 & =\bigoplus_{m<0}A_{0}\cdot(xy)^{-m}x^{m}\oplus A_{0}\oplus\bigoplus_{m>0}A_{0}\cdot x^{m}\\
 & =\bigoplus_{m<0}A_{0}\cdot(\varepsilon(z^{2}+1))^{-m}s^{m}\oplus A_{0}\oplus\bigoplus_{m>0}A_{0}\cdot s^{m}
\end{align*}
where $s=x$. A corresponding real DPD-pair on $(C,\tau)$ is thus
given by $(D,h)=(0,x\sigma^{*}x)=(0,\varepsilon(z^{2}+1))$. Noting
that $1+z^{2}=(1+iz)(1-iz)=f\tau^{*}f$, we deduce from Theorem \ref{thm:MainThm}
1) that $(S_{\varepsilon},\sigma)$ is also given by the real DPD-pair
$(D',h')=(D-\mathrm{div}(f),\varepsilon)=(1\cdot\{i\},\varepsilon)$. 

It then follows from Lemma \ref{lem:S1-torsors} and Example \ref{exa:S1-torsor-pt}
that the restriction of $\pi:(S_{\varepsilon},\sigma)\rightarrow(C,\tau)$
over the real affine open subset $U=C\setminus\{\pm i\}$ is either
the trivial $\mathbb{S}^{1}$-torsor $(U,\tau|_{U})\times\mathbb{S}^{1}$
if $\varepsilon=1$, or the $\mathbb{S}^{1}$-torsor $(U,\tau|_{U})\times\hat{\mathbb{S}}^{1}$
if $\varepsilon=-1$. On the other hand, $\pi^{-1}(\{\pm i\})\cong\mathrm{Spec}(\mathbb{C}[x,y]/(xy))$
is the union of two copies $\overline{O}_{\pm i}^{+}=\{x=z\mp i=0\}$
and $\overline{O}_{\pm i}^{-}=\{y=z\mp i=0\}$ of the complex affine
line intersecting at the point $\{x=y=z\mp i=0\}$, and since $\sigma^{*}x=y$,
we have $\sigma(\overline{O}_{\pm i}^{\pm})=\overline{O}_{\mp i}^{\mp}$. 
\end{example}

\section{Rational real algebraic models of compact differential surfaces with
circle actions }

This section is devoted to the proof of Theorem \ref{thm:MainThm-Model}.
We first construct explicit rational projective and affine real algebraic
models of compact real manifolds of dimension $2$ without boundary
endowed with effective differentiable $S^{1}$-actions. Then we show
that each rational quasi-projective real algebraic model of such a
manifold is $\mathbb{S}^{1}$-equivariantly birationally diffeomorphic
to one of these models. 

\subsection{\label{subsec:Rational-affine-models-Construct}Rational affine models
with compact real loci}

It is a classical result (see e.g. \cite[I.3.a]{Au04}) that a compact
connected real manifold of dimension $2$ without boundary endowed
with an effective differentiable $S^{1}$-action is equivariantly
diffeomorphic to one of the following manifolds: the torus $T=S^{1}\times S^{1}$,
the sphere $S^{2}$, the projective plane $\mathbb{RP}^{2}$ and the
Klein bottle $K$. We now describe smooth rational projective and
affine real algebraic models with $\mathbb{S}^{1}$-actions of these
compact differential surfaces. 

\subsubsection{\label{subsec:Torus-Model}Equivariant rational models of the torus}

The group $S^{1}$ acts on the torus $T=S^{1}\times S^{1}$ by translations
on the second factor. All the orbits are principal, and the orbit
space is equal to $S^{1}$. 

A rational projective model of $T$ is the complexification $(\mathbb{P}_{\mathbb{C}}^{1}\times\mathbb{P}_{\mathbb{C}}^{1},\sigma_{\mathbb{P}_{\mathbb{R}}^{1}\times\mathbb{P}_{\mathbb{R}}^{1}})$
of $\mathbb{P}_{\mathbb{R}}^{1}\times\mathbb{P}_{\mathbb{R}}^{1}$
on which $\mathbb{S}^{1}$ acts on the second factor via the projective
representation induced by the representation $\mathbb{S}^{1}\rightarrow SO_{2}(\mathbb{R})$
in Definition \ref{def:S1-action}. The action of $\mathbb{S}^{1}$
on $(\mathbb{P}_{\mathbb{C}}^{1}=\mathrm{Proj}(\mathbb{C}[u,v]),\sigma_{\mathbb{P}_{\mathbb{R}}^{1}})$
induced by the representation $\mathbb{S}^{1}\rightarrow SO_{2}(\mathbb{R})$
has a pair of non-real fixed points $[1:i]$ and $[1:-i]$ exchanged
by the real structure $\sigma_{\mathbb{P}_{\mathbb{R}}^{1}}$. Their
complement is isomorphic to the trivial $\mathbb{S}^{1}$-torsor.
The affine open subset 
\[
S_{1}=(\mathbb{P}_{\mathbb{C}}^{1}\times\mathbb{P}_{\mathbb{C}}^{1},\sigma_{\mathbb{P}_{\mathbb{R}}^{1}\times\mathbb{P}_{\mathbb{R}}^{1}})\setminus\{[1:\pm i]\times\mathbb{P}_{\mathbb{C}}^{1}\cup\mathbb{P}_{\mathbb{C}}^{1}\times[1:\pm i]\}
\]
is $\sigma_{\mathbb{P}_{\mathbb{R}}^{1}\times\mathbb{P}_{\mathbb{R}}^{1}}$-invariant
and $\mathbb{S}^{1}$-invariant. Furthermore, letting $\sigma_{1}$
be the restriction of $\sigma_{\mathbb{P}_{\mathbb{R}}^{1}\times\mathbb{P}_{\mathbb{R}}^{1}}$
to $S_{1}$, the inclusion $(S_{1},\sigma_{1})\hookrightarrow(\mathbb{P}_{\mathbb{C}}^{1}\times\mathbb{P}_{\mathbb{C}}^{1},\sigma_{\mathbb{P}_{\mathbb{R}}^{1}\times\mathbb{P}_{\mathbb{R}}^{1}})$
is an $\mathbb{S}^{1}$-equivariant birational diffeomorphism. It
follows that $(S_{1},\sigma_{1})$ is a rational affine model of $T$,
equivariantly isomorphic the product $(Q_{1,\mathbb{C}},\sigma_{Q_{1}})\times\mathbb{S}^{1}$
of $\mathbb{S}^{1}$ with the complexification of the smooth affine
quadric curve $Q_{1}\subset\mathrm{Spec}(\mathbb{R}[u,v])$ with equation
$u^{2}+v^{2}=1$, on which $\mathbb{S}^{1}$ acts by translations
on the second factor. 

The projection 
\[
\pi_{1}=\mathrm{pr}_{Q_{1,\mathbb{C}}}:(S_{1},\sigma_{1})=(Q_{1,\mathbb{C}},\sigma_{Q_{1}})\times\mathbb{S}^{1}\rightarrow(C_{1},\tau_{1})=(Q_{1,\mathbb{C}},\sigma_{Q_{1}})
\]
is the trivial $\mathbb{S}^{1}$-torsor. A corresponding real DPD-pair
on $(C_{1},\tau_{1})$ is $(D_{1},h_{1})=(0,1)$, where $0$ denotes
the trivial Weil divisor. The image by $\pi_{1}$ of the real locus
of $(S_{1},\sigma_{1})$ is equal to the real locus $S^{1}$ of $(C_{1},\tau_{1})$. 

\subsubsection{\label{subsec:ModelSphere} Equivariant rational models of the sphere }

The group $S^{1}$ acts on the unit sphere $S^{2}$ in $\mathbb{R}^{3}$
by rotations around a fixed axis. All the orbits are principal, except
for the two fixed points where the axis meets the sphere, and the
orbit space is a closed interval, each of its ends corresponding to
a non-principal orbit. 

A rational projective model is given by the complexification of the
smooth quadric 
\[
Q=\{u^{2}+v^{2}+z^{2}-w^{2}=0\}\subset\mathbb{P}_{\mathbb{R}}^{3}=\mathrm{Proj}_{\mathbb{R}}(\mathbb{R}[u,v,z,w])
\]
endowed with the restriction of the $\mathbb{S}^{1}$-action on $(\mathbb{P}_{\mathbb{C}}^{3},\sigma_{\mathbb{P}_{\mathbb{R}}^{3}})$
defined by the projective representation induced by the direct sum
of the representation $\mathbb{S}^{1}\rightarrow SO_{2}(\mathbb{R})$
with the trivial $2$-dimensional representation. 

The $\mathbb{S}^{1}$-invariant real hyperplane section $H=\{w=0\}$
of $(Q_{\mathbb{C}},\sigma_{Q})$ has empty real locus. Its complement
is $\mathbb{S}^{1}$-equivariantly isomorphic to the complexification
$(S_{2},\sigma_{2})=(\mathbb{S}_{\mathbb{C}}^{2},\sigma_{\mathbb{S}^{2}})$
of the smooth affine quadric $\mathbb{S}^{2}$ in $\mathrm{Spec}(\mathbb{R}[u,v,z])$
defined by the equation $u^{2}+v^{2}+z^{2}=1$, on which $\mathbb{S}^{1}$
acts by the restriction of the direct sum of the representation $\mathbb{S}^{1}\rightarrow SO_{2}(\mathbb{R})$
with the trivial $1$-dimensional representation. By construction,
the inclusion $(S_{2},\sigma_{2})\hookrightarrow(Q_{\mathbb{C}},\sigma_{Q})$
is an $\mathbb{S}^{1}$-equivariant birational diffeomorphism. 

The real quotient morphism of the $\mathbb{S}^{1}$-action on $(S_{2},\sigma_{2})$
is the projection 
\[
\pi_{2}=\mathrm{pr}_{z}:(S_{2},\sigma_{2})\rightarrow(C_{2},\tau_{2})=(\mathbb{A}_{\mathbb{C}}^{1}=\mathrm{Spec}(\mathbb{C}[z],\sigma_{\mathbb{A}_{\mathbb{R}}^{1}})
\]
and a corresponding real DPD-pair on $(C_{2},\tau_{2})$ is $(D_{2},h_{2})=(0,1-z^{2})$,
where $0$ denotes the trivial Weil divisor. The image by $\pi_{2}$
of the real locus of $(S_{2},\sigma_{2})$ is the segment $[-1,1]$
of the real locus $\mathbb{R}$ of $(\mathbb{A}_{\mathbb{C}}^{1},\sigma_{\mathbb{A}_{\mathbb{R}}^{1}})$.
The restriction of $\pi_{2}$ over the principal real afine open subset
$(\mathbb{A}_{\mathbb{C}}^{1}\setminus\{\pm1\},\sigma_{\mathbb{A}_{\mathbb{R}}^{1}}|_{\mathbb{A}_{\mathbb{C}}^{1}\setminus\{\pm1\}})$
is a nontrivial $\mathbb{S}^{1}$-torsor. The fibers of $\pi_{2}$
over the real points $\pm1$ of $(C_{2},\tau_{2})$ are of type c)
in Theorem \ref{prop:Real-Except-Orb-1}. Their respective real loci
consist of a unique point $p_{\pm}=(0,0,\pm1)$, which is an $\mathbb{S}^{1}$-fixed
point.\\

\begin{figure}[h!]

\includegraphics[scale=0.20]{./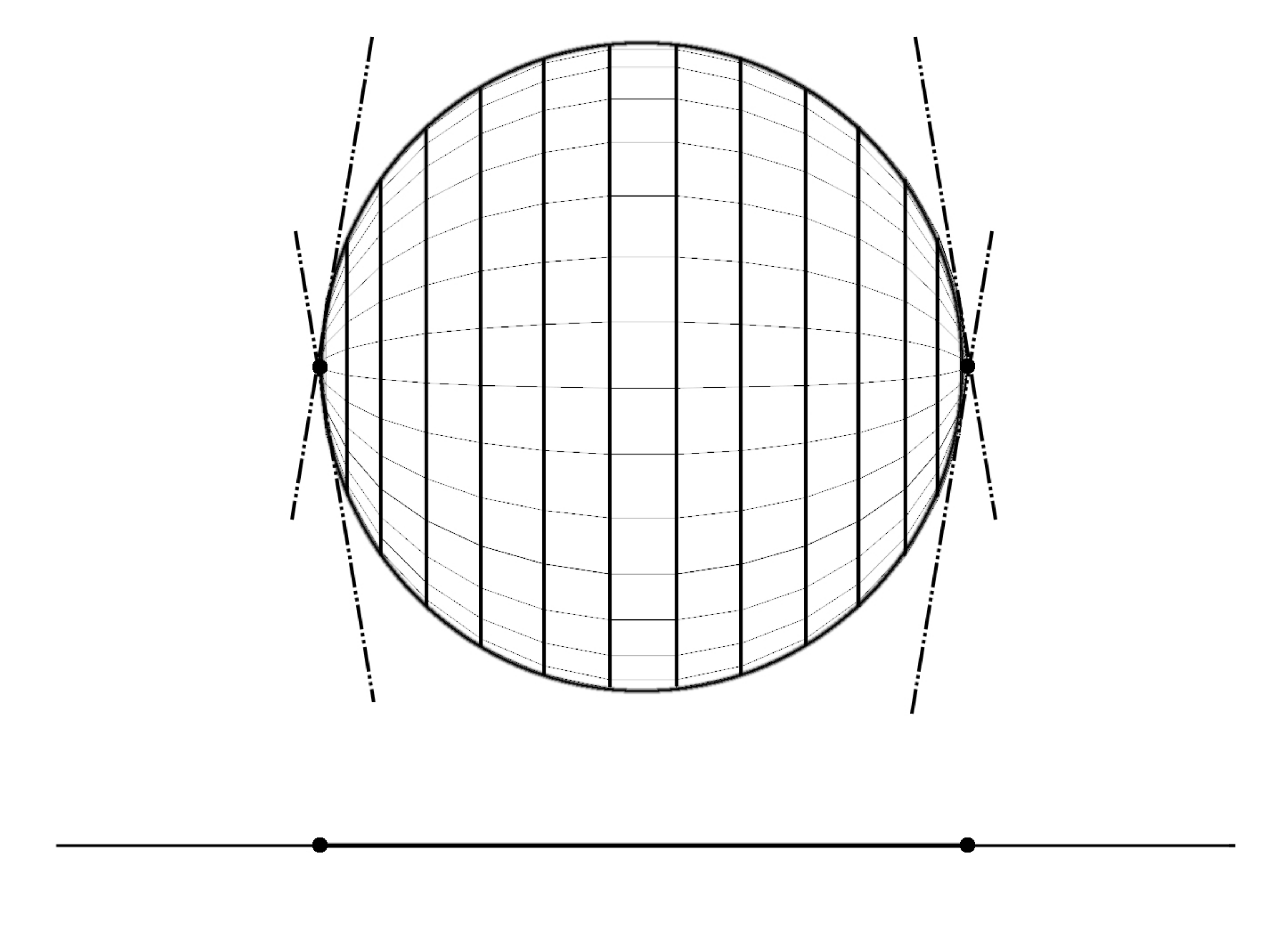}
\begin{picture}(0,0)
\put(-45,85){$p_+$}
\put(-170,85){$p_-$}
\put(-55,15){$1$}
\put(-165,15){$-1$}
\end{picture}

\caption{Projection of the real locus $S^2$ of $(S_2,\sigma_2)$ onto the interval $[-1,1]$. The dashed lines represent the closures $\overline{O}^{\pm}$ of the two principal $\mathbb{G}_{m,\mathbb{C}}$-orbits in $\pi_2^{-1}(\pm 1)$ exchanged by $\sigma_2$ and intersecting at the unique $\mathbb{S}^1$-fixed point $p_{\pm}\subset\pi_2^{-1}(\pm 1)$.}
\label{SphereMod}
\end{figure}

\subsubsection{Equivariant rational models of the projective plane}

The group $S^{1}$ acts on $\mathbb{RP}^{2}$, viewed as the projective
compactification of $\mathbb{R}^{2}$ by adding a ``line at infinity''
$\mathbb{RP}^{1}\cong S^{1}$, by the extension of the linear action
of $S^{1}=SO(2)$ on $\mathbb{R}^{2}$ to an action on $\mathbb{RP}^{2}$
leaving the line at infinity invariant. All the orbits are principal,
except for two of them: one is a fixed point corresponding to the
origin of $\mathbb{R}^{2}$ and the other is the line at in infinity,
equivariantly isomorphic to $S^{1}$ on which $S^{1}$ act with stabilizer
$\mu_{2}$. An $S^{1}$-invariant tubular neighborhood of this second
non principal orbit is isomorphic to the quotient $(S^{1}\times\mathbb{R})/(z,u)\sim(-z,-u)$,
that is, to an open Moebius band $B$, endowed with the induced $S^{1}$-action.
The orbit space of this $S^{1}$-action on $\mathbb{RP}^{2}$ is a
closed interval, each of its ends corresponding to a non-principal
orbit. 

A rational projective model is the complexification $(\mathbb{P}_{\mathbb{C}}^{2}=\mathrm{Proj}(\mathbb{C}[u,v,z]),\sigma_{\mathbb{P}_{\mathbb{R}}^{2}})$
of $\mathbb{P}_{\mathbb{R}}^{2}$ endowed with the $\mathbb{S}^{1}$-action
defined by the projective representation induced by the direct sum
of the representation $\mathbb{S}^{1}\rightarrow SO_{2}(\mathbb{R})$
with the trivial $1$-dimensional representation. The smooth conic
$\Delta$ in $\mathbb{P}_{\mathbb{C}}^{2}$ with equation $u^{2}+v^{2}+z^{2}=0$
is $\sigma_{\mathbb{P}_{\mathbb{R}}^{2}}$-invariant and $\mathbb{S}^{1}$-invariant
and has empty real locus. Its complement $S_{3}=\mathbb{P}_{\mathbb{C}}^{2}\setminus\Delta$
endowed with the restriction $\sigma_{3}$ of $\sigma_{\mathbb{P}_{\mathbb{R}}^{2}}$
and the induced $\mathbb{S}^{1}$-action is thus an affine model of
$\mathbb{RP}^{2}$. By construction, the inclusion $(S_{3},\sigma_{3})\hookrightarrow(\mathbb{P}_{\mathbb{C}}^{2},\sigma_{\mathbb{P}_{\mathbb{R}}^{2}})$
is an $\mathbb{S}^{1}$-equivariant birational diffeomorphism. \\

Another rational projective model of $\mathbb{RP}^{2}$ with $\mathbb{S}^{1}$-action
is obtained by blowing-up the smooth quadric 
\[
Q_{\mathbb{C}}=\{u^{2}+v^{2}+z^{2}-w^{2}=0\}\subset\mathbb{P}_{\mathbb{C}}^{3},
\]
endowed with the real structure $\sigma_{Q}$ and the $\mathbb{S}^{1}$-action
defined in subsection \ref{subsec:ModelSphere}, at the real $\mathbb{S}^{1}$-fixed
point $p_{-}=\left[0:0:-1:1\right]$. Letting $\alpha:(\tilde{Q}_{\mathbb{C}},\sigma_{\tilde{Q}})\rightarrow(Q_{\mathbb{C}},\sigma_{Q})$
be the blow-up morphism, with exceptional divisor $E_{-}\cong(\mathbb{P}_{\mathbb{C}}^{1},\sigma_{\mathbb{P}_{\mathbb{R}}^{1}})$,
the $\mathbb{S}^{1}$-action on $(Q_{\mathbb{C}},\sigma_{Q})$ lifts
to an action on $(\tilde{Q}_{\mathbb{C}},\sigma_{\tilde{Q}})$ for
which the real locus of $(\tilde{Q}_{\mathbb{C}},\sigma_{\tilde{Q}})$
endowed with the induced $S^{1}$-action is equivariantly diffeomorphic
to the $S^{1}$-equivariant connected sum $\mathbb{RP}^{2}\sharp_{S^{1}}S^{2}\simeq\mathbb{RP}^{2}$
endowed with the $S^{1}$-action defined above. The proper transforms
in $(\tilde{Q}_{\mathbb{C}},\sigma_{\tilde{Q}})$ of the curves $\ell_{-}=\{u+iv=z+w=0\}$
and $\overline{\ell}_{-}=\{u-iv=z+w=0\}$ in $(Q_{\mathbb{C}},\sigma_{Q})$
are a pair of non-real disjoint smooth rational curves with self-intersection
$-1$, exchanged by the real structure $\sigma_{\tilde{Q}}$. Their
union is an $\mathbb{S}^{1}$-invariant real closed subset $F_{-}$
of $(\tilde{Q}_{\mathbb{C}},\sigma_{\tilde{Q}})$ and the contraction
of $F_{-}$ is an $\mathbb{S}^{1}$-equivariant birational diffeomorphism
$\alpha':(\tilde{Q}_{\mathbb{C}},\sigma_{\tilde{Q}})\rightarrow(\mathbb{P}_{\mathbb{C}}^{2},\sigma_{\mathbb{P}_{\mathbb{R}}^{2}})$
which maps the proper transform $\tilde{H}\subset\tilde{Q}_{\mathbb{C}}$
of the curve $H=\left\{ w=0\right\} \subset Q_{\mathbb{C}}$ onto
the smooth conic $\Delta\subset\mathbb{P}_{\mathbb{C}}^{2}$ (see
Figure \ref{QuadricP2}). 

\begin{figure}[h!]

\includegraphics[scale=0.50]{./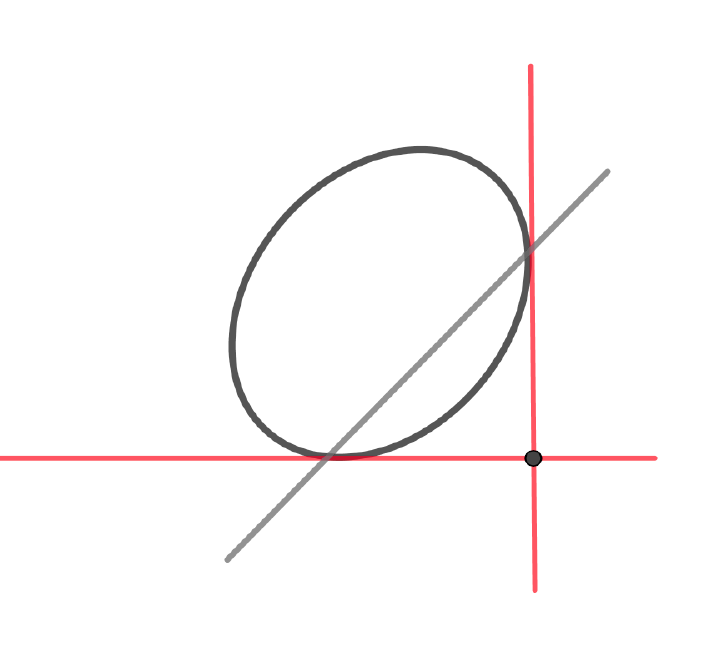} \hspace{0.5cm} 
\includegraphics[scale=0.50]{./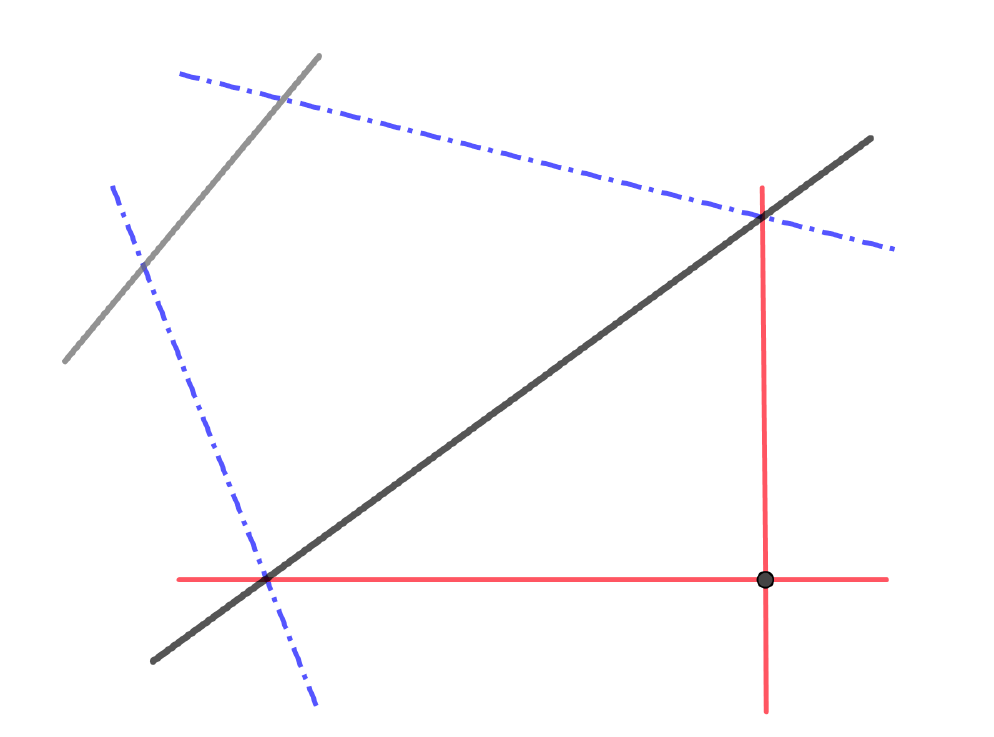} \hspace{0.5cm}
\includegraphics[scale=0.50]{./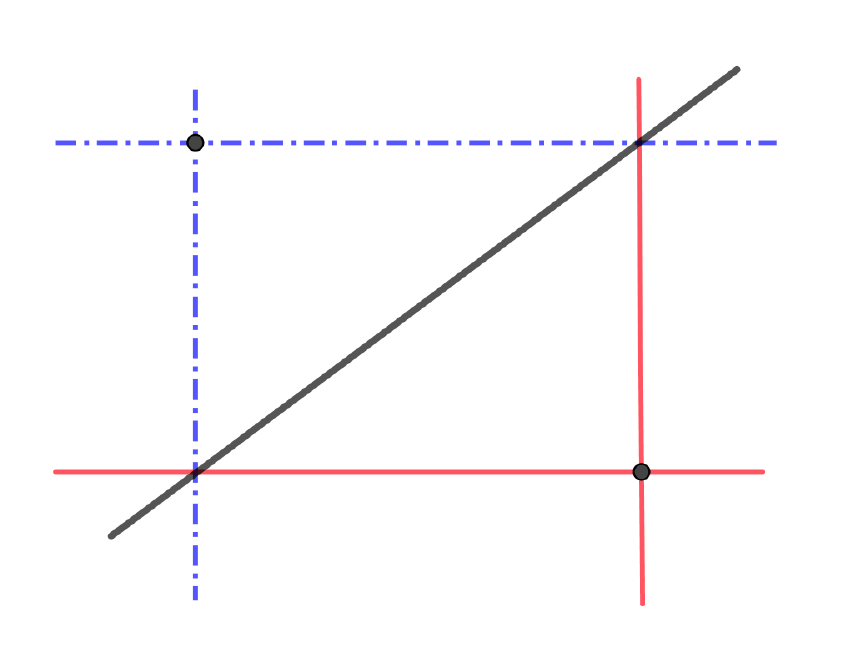}

\begin{picture}(0,0)
\put(-150,0){$\mathbb{P}^2_{\mathbb{C}}$}
\put(-20,0){$\tilde{Q}_{\mathbb{C}}$}
\put(130,0){$Q_{\mathbb{C}}$}

\put(95,90){$p_-$}
\put(140,50){$H$}
\put(130,90){$\ell_-$}
\put(95,60){$\overline{\ell}_-$}

\put(-15,65){$\tilde{H}$}
\put(-10,100){$\ell_-$}
\put(-65,50){$\overline{\ell}_-$}
\put(-65,95){$E_-$}

\put(-160,85){$\Delta$}
\put(-160,55){$E_-$}
\put(-100,40){$\stackrel{\alpha'}{\longleftarrow}$}
\put(60,40){$\stackrel{\alpha}{\longrightarrow}$} 
\end{picture}

\caption{ A real birational map between $(\mathbb{P}^2_{\mathbb{C}},\sigma_{\mathbb{P}^2_{\mathbb{R}}})$ and $(Q_{\mathbb{C}},\sigma_Q)$.}
\label{QuadricP2}
\end{figure}
We obtain a diagram \[\xymatrix{(S_{3},\sigma_{3})\cong (\mathbb{P}_{\mathbb{C}}^{2}\setminus\Delta,\sigma_{\mathbb{P}_{\mathbb{R}}^{2}})& \ar[l]_-{\alpha'} (\tilde{Q}_{\mathbb{C}}\setminus(\tilde{H}\cup F_-),\sigma_{\tilde{Q}}) \ar[r]^-{\alpha} &  (Q_{\mathbb{C}}\setminus H,\sigma_{Q})\cong (S_2,\sigma_2) }\]in
which the left hand side induced morphism $\alpha'$ is an $\mathbb{S}^{1}$-equivariant
real isomorphism. The right hand side morphism $\alpha$ realizes
$(\tilde{Q}_{\mathbb{C}}\setminus(\tilde{H}\cup F_{-}),\sigma_{\tilde{Q}})$
as the $\mathbb{S}^{1}$-equivariant real affine modification of $(S_{2},\sigma_{2})$
obtained by blowing-up $p_{-}$ and removing the proper transforms
of the closures $\overline{O}^{\pm}$ of the two principal $\mathbb{G}_{m,\mathbb{C}}$-orbits
in $\pi_{2}^{-1}(-1)$ exchanged by $\sigma_{2}$ and intersecting
at $p_{-}$ (see Figure \ref{SphereMod}). \\

A real DPD-presentation of $(S_{3},\sigma_{3})$ can be determined
as follows. The smooth real quadric $(Q_{\mathbb{C}},\sigma_{Q})$
is isomorphic to the Galois double cover of $(\mathbb{P}_{\mathbb{C}}^{2},\sigma_{\mathbb{P}_{\mathbb{R}}^{2}})$
branched along the real conic $\Delta$. The commutative diagram \[\xymatrix{ S_2=Q_{\mathbb{C}}\setminus\{w=0\} \ar[r] \ar[d] & Q_{\mathbb{C}} \ar[d]^{[u:v:z:w]\mapsto [u:v:z]} \\ S_3=\mathbb{P}_{\mathbb{C}}^{2}\setminus\Delta \ar[r] & \mathbb{P}^2_{\mathbb{C}} }\]
then identifies $S_{3}$ with the quotient of $S_{2}\cong\{u^{2}+v^{2}+z^{2}=1\}\subset\mathbb{A}_{\mathbb{C}}^{3}$
by the antipodal involution $(u,v,z)\mapsto(-u,-v,-z)$. This involution
commutes with the real structure $\sigma_{2}$ on $S_{2}$ and the
quotient morphism $(S_{2},\sigma_{2})\rightarrow(S_{3},\sigma_{3})\cong(S_{2},\sigma_{2})/\mathbb{Z}_{2}$
is a real morphism, which is equivariant for the $\mathbb{S}^{1}$-actions
on $(S_{2},\sigma_{2})$ and $(S_{3},\sigma_{3})$. The real quotient
morphism $\pi_{2}:(S_{2},\sigma_{2})\rightarrow(C_{2},\tau_{2})$
thus descends to a real morphism 
\[
\pi_{3}:(S_{3},\sigma_{3})\cong(S_{2},\sigma_{2})/\mathbb{Z}_{2}\rightarrow(C_{3},\tau_{3})=(C_{2},\tau_{2})/\mathbb{Z}_{2}\cong(\mathbb{A}_{\mathbb{C}}^{1}=\mathrm{Spec}(\mathbb{C}[Z],\sigma_{\mathbb{A}_{\mathbb{R}}^{1}}),
\]
where $Z=2z^{2}-1$, which is the real quotient morphism of the induced
$\mathbb{S}^{1}$-action on $(S_{3},\sigma_{3})$. With this choice
of coordinate, a direct calculation shows that a real DPD-pair on
$(C_{3},\tau_{3})$ corresponding to $(S_{3},\sigma_{3})$ is $(D_{3},h_{3})=(\frac{1}{2}\{-1\},1-Z^{2})$. 

The image by $\pi_{3}$ of the real locus $\mathbb{RP}^{2}$ of $(S_{3},\sigma_{3})$
is the segment $[-1,1]$ of the real locus $\mathbb{R}$ of $(C_{3},\tau_{3})$.
The restriction of $\pi_{3}$ over the principal real affine open
subset $(\mathbb{A}_{\mathbb{C}}^{1}\setminus\{\pm1\},\sigma_{\mathbb{A}_{\mathbb{R}}^{1}}|_{\mathbb{A}_{\mathbb{C}}^{1}\setminus\{\pm1\}})$
of $(C_{3},\tau_{3})$ is a nontrivial $\mathbb{S}^{1}$-torsor. The
fibers of $\pi_{3}$ over the real points $-1$ and $1$ of $(C_{3},\tau_{3})$
are respectively of type b) and c) in Theorem \ref{prop:Real-Except-Orb-1}.
Their real loci consist respectively of a copy of $S^{1}$ on which
$S^{1}$ acts with stabilizer $\mu_{2}$ and a unique point $p$,
which is a fixed point of the induced $S^{1}$-action on the real
locus of $(S_{3},\sigma_{3})$. 

\begin{figure}[h!]

\includegraphics[scale=0.20]{./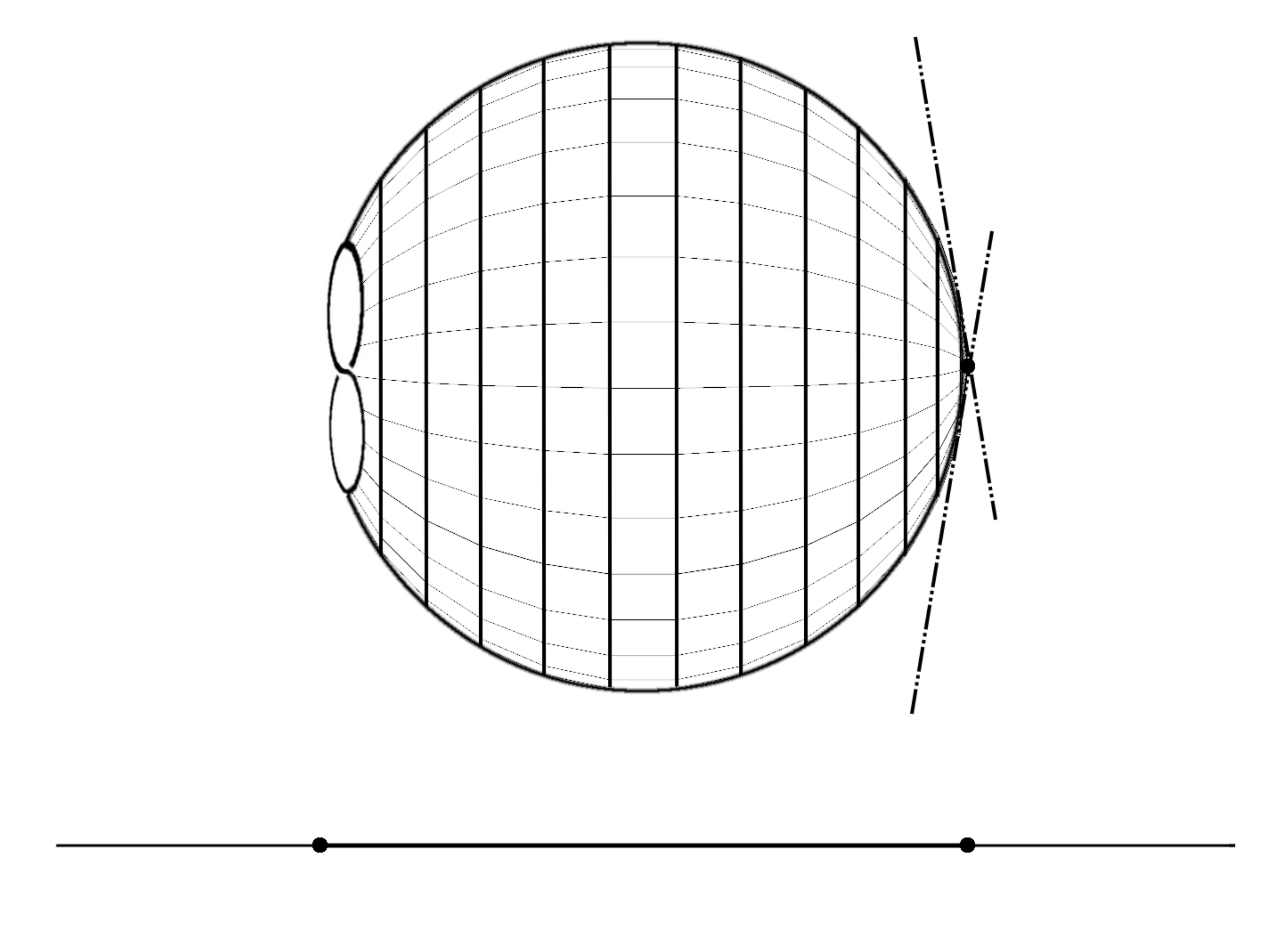}
\begin{picture}(0,0)
\put(-45,85){$p$}
\put(-170,85){$S^1$}
\put(-55,15){$1$}
\put(-165,15){$-1$}
\end{picture}

\caption{Projection of the real locus $\mathbb{RP}^2$ of $(S_3,\sigma_3)=(\mathbb{S}_{\mathbb{C}}^{2},\sigma_{\mathbb{S}^{2}})/\mathbb{Z}_{2}$ onto the interval $[-1,1]$. The dashed lines represent the closures $\overline{O}^{\pm}$ of the two principal $\mathbb{G}_{m,\mathbb{C}}$-orbits in $\pi_3^{-1}(1)$ exchanged by $\sigma_3$ and intersecting at the unique real $\mathbb{S}^1$-fixed point $p\subset\pi_3^{-1}(1)$.}
\label{P2Mod}
\end{figure}

\subsubsection{\label{subsec:KleinB-model}Equivariant rational models of the Klein
bottle }

The Klein bottle $K$ with its $S^{1}$-action is the $S^{1}$-equivariant
connected sum $\mathbb{RP}^{2}\sharp_{S^{1}}\mathbb{RP}^{2}$ of two
copies of $\mathbb{RP}^{2}$ endowed with the $S^{1}$-action defined
in the previous subsection. Namely, $\mathbb{RP}^{2}\sharp_{S^{1}}\mathbb{RP}^{2}$
is obtained by removing on each copy of $\mathbb{RP}^{2}$ an $S^{1}$-invariant
open disc containing the unique $S^{1}$-fixed point and gluing together
the resulting boundary circles in an $S^{1}$-equivariant way. Equivalenty,
$\mathbb{RP}^{2}\sharp_{S^{1}}\mathbb{RP}^{2}$ is obtained by the
$S^{1}$-equivariant gluing of two closed Moebius bands 
\[
\overline{B}=(S^{1}\times[-1,1])/(z,u)\sim(-z,-u)
\]
with the $S^{1}$-action as in the previous subsection along their
boundary circles. The resulting $S^{1}$-action on $K$ has two non
principal orbits isomorphic to $S^{1}$ with stabilizer $\mu_{2}$,
and with $S^{1}$-invariant tubular neighborhoods diffeomorphic to
open Moebius bands. The orbit space is again closed interval, each
of its ends corresponding to a non-principal orbit. 

\begin{figure}[h!]

\includegraphics[scale=0.60]{./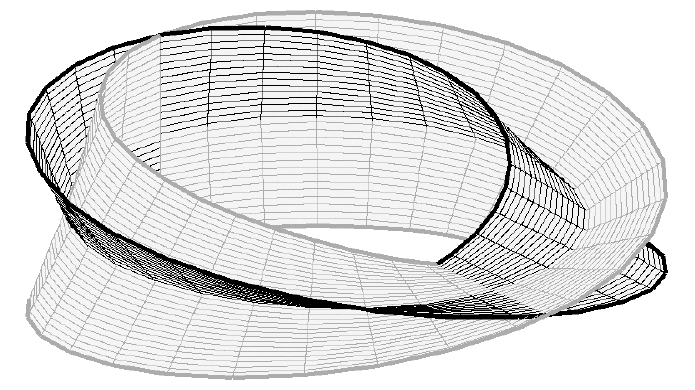} \hspace{1cm}
\includegraphics[scale=0.60]{./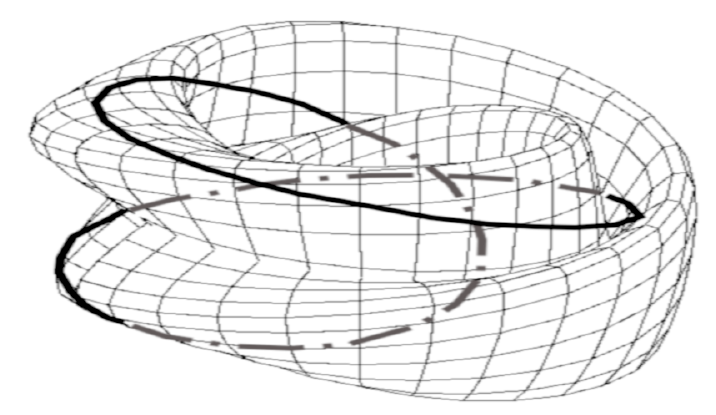}

\caption{Two Moebius bands glued along their boundaries.}
\label{KleinGlue}
\end{figure}

A projective rational model of $K$ is obtained as follows: let $(\mathbb{P}_{\mathbb{C}}^{2}=\mathrm{Proj}(\mathbb{C}[u,v,z]),\sigma_{\mathbb{P}_{\mathbb{R}}^{2}})$
be endowed with the $\mathbb{S}^{1}$-action defined by the projective
representation induced by the direct sum of the representation $\mathbb{S}^{1}\rightarrow SO_{2}(\mathbb{R})$
with the trivial $1$-dimensional representation. The blow-up of $(\mathbb{P}_{\mathbb{C}}^{2},\sigma_{\mathbb{P}_{\mathbb{R}}^{2}})$
at the real $\mathbb{S}^{1}$-fixed point $[0:0:1]$ is the real Hirzebruch
surface $(\mathbb{F}_{1,\mathbb{C}}=\mathbb{P}(\mathcal{O}_{\mathbb{P}_{\mathbb{R}}^{1}}\oplus\mathcal{O}_{\mathbb{P}_{\mathbb{R}}^{1}}(-1)),\sigma_{\mathbb{F}_{1}})$
in which the exceptional divisor $E\cong(\mathbb{P}_{\mathbb{C}}^{1},\sigma_{\mathbb{P}_{\mathbb{R}}^{1}})$
is the section with self-intersection $-1$ of the $\mathbb{P}^{1}$-bundle
structure. The $\mathbb{S}^{1}$-action on $(\mathbb{P}_{\mathbb{C}}^{2},\sigma_{\mathbb{P}_{\mathbb{R}}^{2}})$
lifts to an action on $(\mathbb{F}_{1,\mathbb{C}},\sigma_{\mathbb{F}_{1}})$
and the real locus of $(\mathbb{F}_{1,\mathbb{C}},\sigma_{\mathbb{F}_{1}})$
endowed with the induced $S^{1}$-action is diffeomorphic to $K\simeq\mathbb{RP}^{2}\sharp_{S^{1}}\mathbb{RP}^{2}$
endowed with the $S^{1}$-action defined above. 

An affine model of $K$ is in turn obtained from $(\mathbb{F}_{1,\mathbb{C}},\sigma_{\mathbb{F}_{1}})$
by removing the union of the proper transform of the conic $\Delta=\left\{ u^{2}+v^{2}+z^{2}=0\right\} \subset\mathbb{P}_{\mathbb{C}}^{2}$
and of the proper transforms of the pair of non-real lines 
\[
\ell=\{u+iv=0\}\quad\textrm{and}\quad\sigma_{\mathbb{P}_{\mathbb{R}}^{2}}(\ell)=\{u-iv=0\}
\]
of $(\mathbb{P}_{\mathbb{C}}^{2},\sigma_{\mathbb{P}_{\mathbb{R}}^{2}})$
passing through $[0:0:1]$. Indeed, $\Delta\cup\ell\cup\sigma_{\mathbb{P}_{\mathbb{R}}^{2}}(\ell)$
is a real $\mathbb{S}^{1}$-invariant closed subset of $(\mathbb{P}_{\mathbb{C}}^{2},\sigma_{\mathbb{P}_{\mathbb{R}}^{2}})$
with $[0:0:1]$ as a unique real point, whose proper transform $B$
in $(\mathbb{F}_{1,\mathbb{C}},\sigma_{\mathbb{F}_{1}})$ is an ample
$\mathbb{S}^{1}$-invariant curve with empty real locus. So $(S_{4},\sigma_{4})=(\mathbb{F}_{1,\mathbb{C}}\setminus B,\sigma_{\mathbb{F}_{1}}|_{\mathbb{F}_{1,\mathbb{C}}\setminus B})$
endowed with the induced $\mathbb{S}^{1}$-action is a real affine
surface whose real locus is diffeomorphic to $K$. By construction,
the inclusion $(S_{4},\sigma_{4})\hookrightarrow(\mathbb{F}_{1,\mathbb{C}},\sigma_{\mathbb{F}_{1}})$
is an $\mathbb{S}^{1}$-equivariant birational diffeomorphism. 

A alternative projective model of $K$ is obtained from the quadric
$(Q_{\mathbb{C}},\sigma_{Q})$ of subsection \ref{subsec:ModelSphere}
by blowing-up the two real $\mathbb{S}^{1}$-fixed points $p_{\pm}=\left[0:0:\pm1:1\right]$
with respective exceptional divisors $E_{\pm}\cong(\mathbb{P}_{\mathbb{C}}^{1},\sigma_{\mathbb{P}_{\mathbb{R}}^{1}})$.
Letting $\beta:(\hat{Q}_{\mathbb{C}},\sigma_{\hat{Q}})\rightarrow(Q_{\mathbb{C}},\sigma_{Q})$
be the real blow-up morphism, the $\mathbb{S}^{1}$-action on $(Q_{\mathbb{C}},\sigma_{Q})$
lifts to an action on $(\hat{Q}_{\mathbb{C}},\sigma_{\hat{Q}})$ for
which the real locus of $(\hat{Q}_{\mathbb{C}},\sigma_{\hat{Q}})$
endowed with the induced $S^{1}$-action is equivariantly diffeomorphic
to the connected sum $\mathbb{RP}^{2}\sharp_{S^{1}}S^{2}\sharp_{S^{1}}\mathbb{RP}^{2}\simeq\mathbb{RP}^{2}\sharp_{S^{1}}\mathbb{RP}^{2}$,
hence to the Klein bottle $K$ endowed with the $S^{1}$-action defined
above. As in the previous subsection, the proper transforms in $(\hat{Q}_{\mathbb{C}},\sigma_{\hat{Q}})$
of the curves $\ell_{-}=\{u+iv=z+w=0\}$ and $\overline{\ell}_{-}=\{u-iv=z+w=0\}$
in $(Q_{\mathbb{C}},\sigma_{Q})$ are a pair of non-real disjoint
smooth rational curves with self-intersection $-1$, exchanged by
the real structure $\sigma_{\hat{Q}}$. Their union is an $\mathbb{S}^{1}$-invariant
real closed subset $F_{-}$ of $(\hat{Q}_{\mathbb{C}},\sigma_{\hat{Q}})$
and the contraction of $F_{-}$ is an $\mathbb{S}^{1}$-equivariant
birational diffeomorphism $\beta':(\hat{Q}_{\mathbb{C}},\sigma_{\hat{Q}})\rightarrow(\mathbb{F}_{1,\mathbb{C}},\sigma_{\mathbb{F}_{1}})$
which maps the proper transform $\hat{H}\subset\hat{Q}_{\mathbb{C}}$
of the curve $H=\left\{ w=0\right\} \subset Q_{\mathbb{C}}$ onto
the proper transform in $(\mathbb{F}_{1,\mathbb{C}},\sigma_{\mathbb{F}_{1}})$
of the conic $\Delta=\{u^{2}+v^{2}+z^{2}=0\}\subset\mathbb{P}_{\mathbb{C}}^{2}$.
The proper transforms in $(\hat{Q}_{\mathbb{C}},\sigma_{\hat{Q}})$
of the curves $\ell_{+}=\{u+iv=z-w=0\}$ and $\overline{\ell}_{+}=\{u-iv=z-w=0\}$
in $(Q_{\mathbb{C}},\sigma_{Q})$ are also a pair of non-real disjoint
smooth rational curves with self-intersection $-1$, exchanged by
the real structure $\sigma_{\hat{Q}}$. Their union is an $\mathbb{S}^{1}$-invariant
real closed subset $F_{+}$ of $(\hat{Q}_{\mathbb{C}},\sigma_{\hat{Q}})$
whose image by $\beta':(\hat{Q}_{\mathbb{C}},\sigma_{\hat{Q}})\rightarrow(\mathbb{F}_{1,\mathbb{C}},\sigma_{\mathbb{F}_{1}})$
is equal to the union of the proper transforms in $(\mathbb{F}_{1,\mathbb{C}},\sigma_{\mathbb{F}_{1}})$
of the lines $\ell$ and $\sigma_{\mathbb{P}_{\mathbb{R}}^{2}}(\ell)$
of $(\mathbb{P}_{\mathbb{C}}^{2},\sigma_{\mathbb{P}_{\mathbb{R}}^{2}})$
(see Figure \ref{QuadricF1}). 

\begin{figure}[h!]
\includegraphics[scale=0.45]{./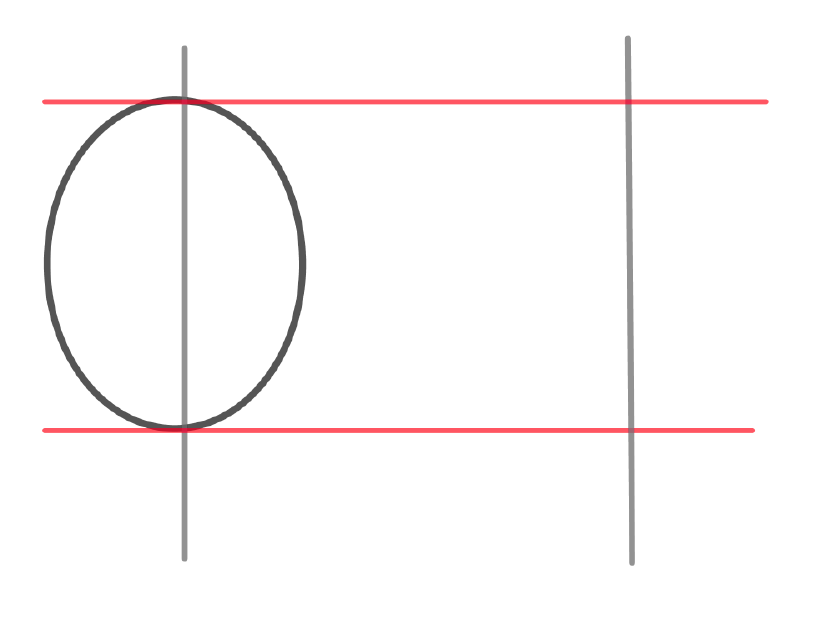} \hspace{0.5cm} 
\includegraphics[scale=0.35]{./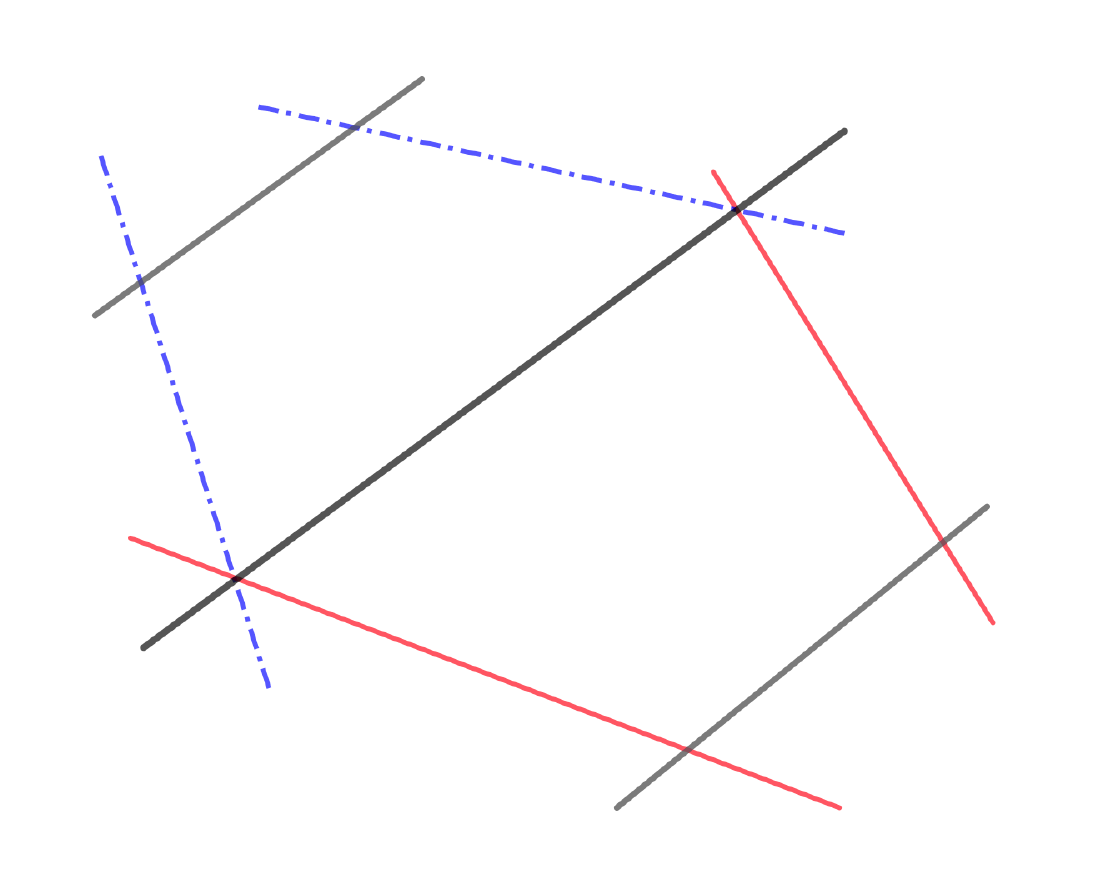} \hspace{0.5cm}
\includegraphics[scale=0.45]{./Quadric.pdf}

\begin{picture}(0,0)
\put(-150,10){$\mathbb{F}_{1,\mathbb{C}}$}
\put(-20,10){$\hat{Q}_{\mathbb{C}}$}
\put(120,10){$Q_{\mathbb{C}}$}

\put(85,85){$p_-$}
\put(160,25){$p_+$}
\put(130,50){$H$}
\put(120,80){$\ell_-$}
\put(85,55){$\overline{\ell}_-$}
\put(120,25){$\ell_+$}
\put(160,55){$\overline{\ell}_+$}

\put(-15,65){$\hat{H}$}
\put(-10,90){$\ell_-$}
\put(-55,55){$\overline{\ell}_-$}
\put(-45,83){$E_-$}
\put(30,60){$\overline{\ell}_+$}
\put(-20,25){$\ell_+$}
\put(25,25){$E_+$}

\put(-190,55){$\Delta$}
\put(-160,55){$E_-$}
\put(-100,55){$E_+$}
\put(-150,28){$\ell_+=\ell$}
\put(-158,88){$\overline{\ell}_+=\sigma_{\mathbb{P}^2_{\mathbb{R}}}(\ell)$}
\put(-80,50){$\stackrel{\beta'}{\longleftarrow}$}
\put(55,50){$\stackrel{\beta}{\longrightarrow}$} 
\end{picture}

\caption{ A real birational map between $(\mathbb{F}_{1,\mathbb{C}},\sigma_{\mathbb{F}_1})$ and $(Q_{\mathbb{C}},\sigma_Q)$.}
\label{QuadricF1}
\end{figure}

We obtain a diagram \[\xymatrix{ (S_4,\sigma_4)=(\mathbb{F}_{1,\mathbb{C}}\setminus B,\sigma_{\mathbb{F}_{1}}) & \ar[l]_{\beta '}(\hat{Q}_{\mathbb{C}}\setminus(\hat{H}\cup F_-\cup F_+),\sigma_{\hat{Q}}) \ar[r]^{\beta} &   (Q_{\mathbb{C}}\setminus H,\sigma_{Q})\cong (S_2,\sigma_2)}\] in
which the left hand side induced morphism $\beta'$ is an $\mathbb{S}^{1}$-equivariant
real isomorphism. The right hand side morphism $\beta$ realizes the
real affine surface $(\hat{Q}_{\mathbb{C}}\setminus(\hat{H}\cup F_{-}\cup F_{+}),\sigma_{\hat{Q}})$
as the $\mathbb{S}^{1}$-equivariant real affine modification of $(S_{2},\sigma_{2})$
obtained by blowing-up $p_{-}$ and $p_{+}$ and removing the proper
transforms of the closures $\overline{O}_{\pm1}^{\pm}$ of the two
principal $\mathbb{G}_{m,\mathbb{C}}$-orbits in $\pi_{2}^{-1}(\pm1)$
exchanged by $\sigma_{2}$ and intersecting at $p_{\pm}$ (see Figure
\ref{SphereMod}). \\

The $\mathbb{S}^{1}$-equivariant affine modification $\beta:(S_{4},\sigma_{4})\rightarrow(S_{2},\sigma_{2})$
can be made explicit as follows. Let $(Q_{2},\sigma_{2}')$ be the
smooth surface in $\mathrm{Spec}(\mathbb{C}[x,y,z])$ with equation
$xy=1-z^{2}$, endowed with the real structure defined as the composition
of the involution $(x,y,z)\mapsto(y,x,z)$ with the complex conjugation.
The isomorphism
\[
Q_{2}\rightarrow S_{2}\subset\mathrm{Spec}(\mathbb{C}[u,v,z]),\quad\left(x,y,z\right)\mapsto(\frac{x+y}{2},\frac{x-y}{2i},z)
\]
induces an real isomorphism $(Q_{2},\sigma'_{2})\cong(S_{2},\sigma_{2})$
which is equivariant for the $\mathbb{S}^{1}$-action on $(Q_{2},\sigma'_{2})$
given by the hyperbolic $\mathbb{G}_{m,\mathbb{C}}$-action $\mu(t,(x,y,z))=(tx,t^{-1}y,z)$.
Via this isomorphism, the $\mathbb{S}^{1}$-equivariant real affine
modification of $(S_{2},\sigma_{2})$ described geometrically above
coincides with the affine modification of $(Q_{2},\sigma'_{2})$ with
center at the real $\mathbb{S}^{1}$-invariant closed subscheme with
defining ideal $I=(x,y)^{2}$ and with real $\mathbb{S}^{1}$-invariant
principal divisor $\mathrm{div}(xy)$. It follows that $(S_{4},\sigma_{4})$
is $\mathbb{S}^{1}$-equivariantly isomorphic to the complex affine
surface in $\mathrm{Spec}(\mathbb{C}[x^{\pm1},y,z])$ defined by the
equation $xy^{2}=1-z^{2}$ , endowed with the real structure given
by the composition of the involution $(x,y,z)\mapsto(x^{-1},xy,z)$
with the complex conjugation, equipped the $\mathbb{S}^{1}$-action
given by the hyperbolic $\mathbb{G}_{m,\mathbb{C}}$-action $\mu(t,(x,y,z))=(t^{2}x,t^{-1}y,z)$.

We deduce from this description that the real quotient morphism of
$(S_{4},\sigma_{4})$ is the projection 
\[
\pi_{4}=\mathrm{pr}_{z}:(S_{4},\sigma_{4})\rightarrow(C_{4},\tau_{4})=(\mathbb{A}_{\mathbb{C}}^{1}=\mathrm{Spec}(\mathbb{C}[z],\sigma_{\mathbb{A}_{\mathbb{R}}^{1}})
\]
and that a real DPD-pair on $(C_{4},\tau_{4})$ corresponding to $(S_{4},\sigma_{4})$
is $(D_{4},h_{4})=(\frac{1}{2}\{-1\}+\frac{1}{2}\{1\},1-z^{2})$.
The image by $\pi_{4}$ of the real locus of $(S_{4},\sigma_{4})$
is the segment $[-1,1]$ of the real locus $\mathbb{R}$ of $(C_{4},\tau_{4})$.
The restriction of $\pi_{4}$ over the principal real affine open
subset $(\mathbb{A}_{\mathbb{C}}^{1}\setminus\{\pm1\},\sigma_{\mathbb{A}_{\mathbb{R}}^{1}}|_{\mathbb{A}_{\mathbb{C}}^{1}\setminus\{\pm1\}})$
of $(C_{4},\tau_{4})$ is a nontrivial $\mathbb{S}^{1}$-torsor. The
fibers of $\pi_{4}$ over the real points $\pm1$ of $(C_{4},\tau_{4})$
are of type b) in Theorem \ref{prop:Real-Except-Orb-1}. Their real
loci consist of a copy of $S^{1}$ on which $S^{1}$ acts with stabilizer
$\mu_{2}$. 

\begin{figure}[h!]

\includegraphics[scale=0.20]{./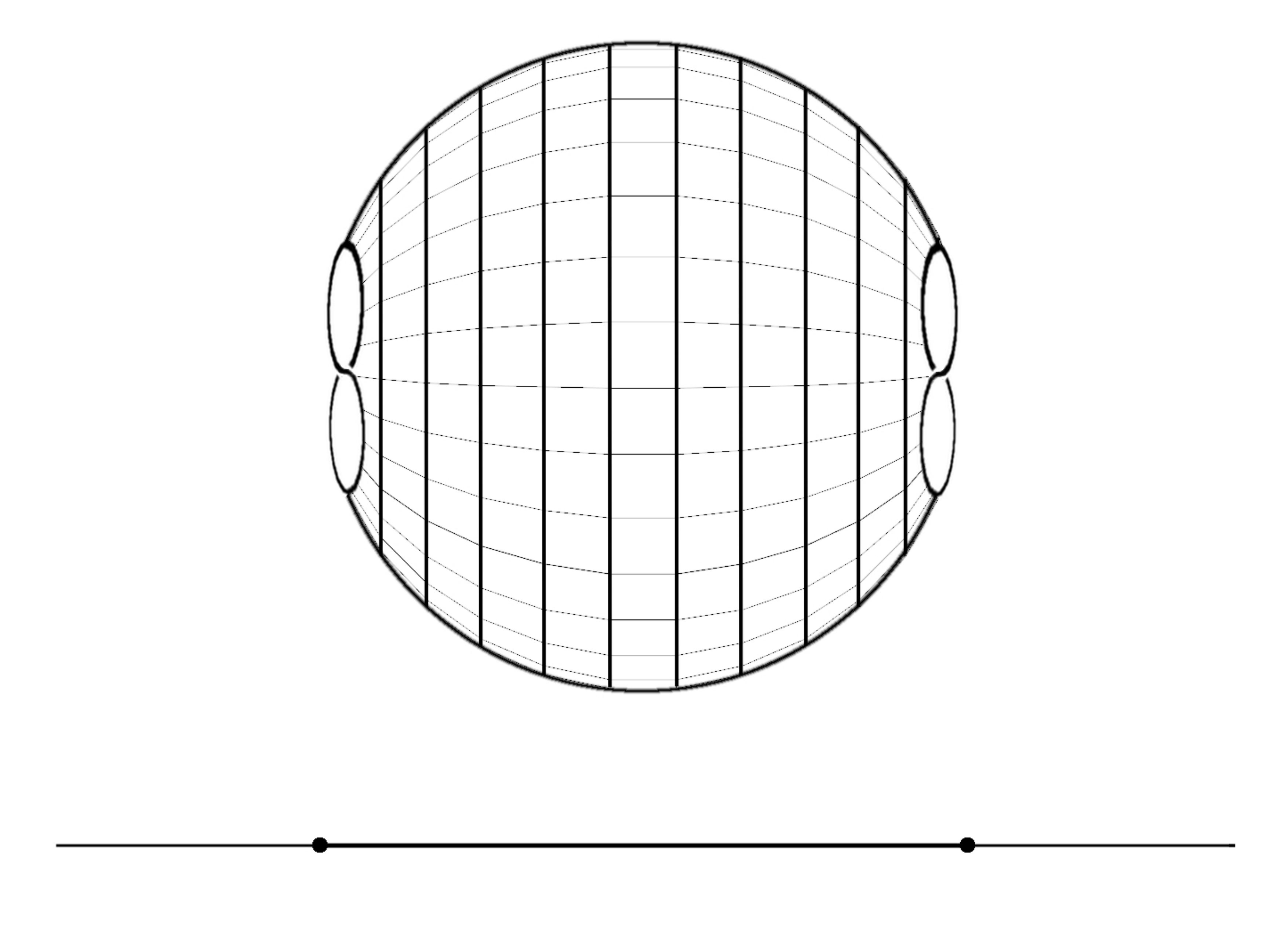}
\begin{picture}(0,0)
\put(-45,85){$S^1$}
\put(-170,85){$S^1$}
\put(-55,15){$1$}
\put(-165,15){$-1$}
\end{picture}

\caption{Projection of the real locus $K$ of $(S_4,\sigma_4)$ onto the interval $[-1,1]$.}
\label{KleinMod}
\end{figure}

\subsection{Uniqueness of models up to equivariant birational diffeomorphism }

This subsection is devoted to the proof of the following result which
implies Theorem \ref{thm:MainThm-Model}: 
\begin{prop}
Every smooth rational quasi-projective real surface with an effective
$\mathbb{S}^{1}$-action and whose real locus is a compact connected
manifold of dimension $2$ without boundary is $\mathbb{S}^{1}$-equivariantly
birationally diffeomorphic to one of the affine models constructed
in subsection \ref{subsec:Rational-affine-models-Construct}, summarized
in Table \ref{RatMods} below. 
\end{prop}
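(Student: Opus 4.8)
The plan is to reduce the problem to a smooth real affine surface, to read off from its real DPD-presentation a discrete invariant that simultaneously records the diffeomorphism type of the real locus and the types of its singular real fibers, and then to match this invariant with exactly one of the four affine models of subsection \ref{subsec:Rational-affine-models-Construct}. First I would replace $(X,\sigma)$ by an $\mathbb{S}^{1}$-equivariantly birationally diffeomorphic smooth real affine surface. Since the real locus $M=X(\mathbb{C})^{\sigma}$ is compact, it is open and closed in the real locus of any smooth projective $\mathbb{S}^{1}$-equivariant model $(\bar{X},\bar{\sigma})$ of $(X,\sigma)$ obtained by equivariant resolution and compactification, hence a union of connected components of $\bar{X}(\mathbb{C})^{\bar{\sigma}}$. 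The aim is to produce a real $\mathbb{S}^{1}$-invariant ample effective divisor $B$ on $\bar{X}$ with empty real locus whose support contains $\bar{X}\setminus X$; then $(S,\sigma)=(\bar{X}\setminus B,\bar{\sigma}|_{\bar{X}\setminus B})$ is a smooth real affine surface with effective $\mathbb{S}^{1}$-action containing $M$, and the open immersion $(S,\sigma)\hookrightarrow(X,\sigma)$ is an $\mathbb{S}^{1}$-equivariant birational diffeomorphism, its inverse rational map being defined at every real point because $X\setminus S$ has empty real locus. Constructing this invariant ample divisor avoiding the compact real locus, by means of an invariant real section of a suitably linearized ample sheaf that is nowhere zero on $M$, is the step I expect to be the main obstacle.

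By Theorem \ref{thm:MainThm} the affine surface $(S,\sigma)$ is then given by a regular real DPD-pair $(D,h)$ on a smooth rational real affine curve $(C,\tau)$, with real quotient morphism $\pi\colon(S,\sigma)\to(C,\tau)$. As $M$ is compact, connected and $S^{1}$-invariant, its image $\pi(M)$ is a compact connected subset of the one-dimensional real locus $C(\mathbb{C})^{\tau}$, hence either an entire circle component or a closed interval $[a,b]$. For each real point $c$ of $(C,\tau)$, Theorem \ref{prop:Real-Except-Orb-1} sorts the fiber into type a), b) or c): the interior points of $\pi(M)$ carry type a) fibers that are real, hence real principal orbits isomorphic to $\mathbb{S}^{1}$, while the endpoints of $\pi(M)$, when present, carry the singular real fibers, of type b) (an exceptional orbit isomorphic to $\mathbb{S}^{1}$ with stabilizer $\mu_{2}$) or of type c) (a pair of $\sigma$-conjugate affine lines meeting transversally at a real $\mathbb{S}^{1}$-fixed point). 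Crucially, Theorem \ref{prop:Real-Except-Orb-1} shows these are the only possibilities over real points, so the invariant I extract is simply the shape of $\pi(M)$ together with the unordered pair of endpoint types.

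Appealing to the differentiable classification recalled at the start of subsection \ref{subsec:Rational-affine-models-Construct} (see \cite{Au04}), a circle identifies $M$ with the torus $T$, while an interval with endpoint types $(c,c)$, $(b,c)$ or $(b,b)$ identifies $M$ with $S^{2}$, $\mathbb{RP}^{2}$ or $K$ respectively. In each interval case $h$ has a simple zero at either endpoint, with $D(c)\in\mathbb{Z}$ at a type c) point and $D(c)\in\tfrac{1}{2}+\mathbb{Z}$ at a type b) point; after normalizing the fractional parts of $D$ as in Lemma \ref{lem:Local-Reduction}, these data coincide exactly with the model pairs $(D_{2},h_{2})=(0,1-z^{2})$, $(D_{3},h_{3})=(\tfrac{1}{2}\{-1\},1-Z^{2})$ and $(D_{4},h_{4})=(\tfrac{1}{2}\{-1\}+\tfrac{1}{2}\{1\},1-z^{2})$. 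In the circle case $h>0$ all along $\pi(M)$ and $D$ is integral, matching $(D_{1},h_{1})=(0,1)$ on $(C_{1},\tau_{1})=(Q_{1,\mathbb{C}},\sigma_{Q_{1}})$; here the triviality of the $\mathbb{S}^{1}$-torsor over the circle, as opposed to the twisted one giving $K$, is forced by $M$ being the torus.

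Finally I would choose a real birational map $\psi\colon(C,\tau)\dashrightarrow(C_{i},\tau_{i})$ onto the base of the relevant model that restricts to an isomorphism of some real open neighborhood $U$ of $\pi(M)$ onto its image, is a diffeomorphism from $\pi(M)$ onto $\pi_{i}(M_{i})$, and carries the singular real points to their standard positions: the points $\pm 1$ on $\mathbb{A}_{\mathbb{C}}^{1}$ in the interval cases, realized by a M\"obius transformation of $\mathbb{P}^{1}$, and the circle component onto the real locus of $Q_{1,\mathbb{C}}$ in the torus case, realized as in Example \ref{exa:StereographicProj}. Over $U$ the pulled-back pair $(\psi^{*}D_{i},\psi^{*}h_{i})$ then agrees with $(D,h)$ up to the equivalence of Theorem \ref{thm:MainThm} 1), the remaining integral divisor and the norm factor $f\tau^{*}f$ being absorbed exactly as in that criterion, so the surfaces $\pi^{-1}(U)$ and $\pi_{i}^{-1}(\psi(U))$ are $\mathbb{S}^{1}$-equivariantly isomorphic. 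Since $M$ lies over $\pi(M)\subset U$ and $\psi$ is a diffeomorphism there, this isomorphism restricts to an $\mathbb{S}^{1}$-equivariant diffeomorphism of $M$ onto $M_{i}$, and the induced real birational map $(S,\sigma)\dashrightarrow(S_{i},\sigma_{i})$ is the desired $\mathbb{S}^{1}$-equivariant birational diffeomorphism, completing the proof.
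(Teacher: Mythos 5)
Your overall architecture is the same as the paper's: reduce to a smooth affine model, split according to whether the image of the real locus under the real quotient morphism is a circle or a closed interval, and identify the model by pinning down the DPD-pair via the classification of real fibers in Theorem \ref{prop:Real-Except-Orb-1}. The second half of your argument --- interior points of $\pi(M)$ carrying type a) fibers with real points, endpoints forced to be of type b) or c), the matching $(c,c)\mapsto S^{2}$, $(b,c)\mapsto\mathbb{RP}^{2}$, $(b,b)\mapsto K$, and the circle case giving the torus model --- is in substance the content of Lemma \ref{lem:CompactLocus-case-disjunction}, Lemma \ref{lem:DPD-pair-for-torus} and the propositions of subsections \ref{subsec:First-case} and \ref{subsec:Second-case}, and your appeals to Theorem \ref{thm:MainThm} 1) and Lemma \ref{lem:Local-Reduction} for the normalization of $(D,h)$ are the right ones (one small caveat: your parenthetical about a ``twisted torsor over the circle giving $K$'' is off --- no algebraic $\mathbb{S}^{1}$-torsor over $(Q_{1,\mathbb{C}},\sigma_{Q_{1}})$ has real locus $K$; the Klein bottle only arises over an interval base, which is exactly what Lemma \ref{lem:DPD-pair-for-torus} encodes).

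The genuine gap is the step you yourself flag as ``the main obstacle'': the existence of a real $\mathbb{S}^{1}$-invariant ample divisor on the equivariant projective completion $(\overline{X},\overline{\Sigma})$ with empty real locus. Without it the reduction to the affine case, hence the whole proof, does not get off the ground. The paper's Lemma \ref{lem:Quas-proj-2-affine} supplies exactly this: by Sumihiro's theorem there is a very ample $\mathbb{S}^{1}$-linearized invertible sheaf $\mathcal{L}$ on $(\overline{X},\overline{\Sigma})$; the underlying $\mathbb{G}_{m,\mathbb{C}}$-representation on $H^{0}(\overline{X},\mathcal{L})$ splits into hyperbolic two-dimensional pieces, with real structure induced by $(x_{i},y_{i})\mapsto(\overline{y}_{i},\overline{x}_{i})$, plus trivial one-dimensional pieces, and the invariant quadric $Q=\{\sum_{i}x_{i}y_{i}+\sum_{j}z_{j}^{2}=0\}$ in $\mathbb{P}(H^{0}(\overline{X},\mathcal{L}))$ has empty real locus, since at a real point the equation reads $\sum_{i}|x_{i}|^{2}+\sum_{j}z_{j}^{2}>0$; then $(\overline{X}\setminus Q,\overline{\Sigma}|_{\overline{X}\setminus Q})$ is the desired affine surface. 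Two further points in your reduction also need justification. First, for your requirement $\mathrm{Supp}(B)\supseteq\overline{X}\setminus X$ to be compatible with $B$ having no real points, you must know that $\overline{X}\setminus X$ has empty real locus; knowing only that $M$ is a union of connected components of $\overline{X}(\mathbb{C})^{\overline{\Sigma}}$ is not enough. The paper gets this from Comessatti's theorem: the real locus of a smooth rational projective real surface is connected, hence equal to $M$. Second, once this is known, demanding that $B$ contain $\overline{X}\setminus X$ is superfluous (and would create an extra ampleness issue): it suffices to take $S=\overline{X}\setminus Q$ and compose the two birational diffeomorphisms $S\hookrightarrow\overline{X}$ and $X\hookrightarrow\overline{X}$, which is what the paper does.
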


\begin{figure}[htb!]
\begin{center}
\begin{tabular}{|c|c|c|c|c|}
\hline 
Real locus & $S^{1}\times S^{1}$ & $S^{2}$ & $\mathbb{RP}^{2}$ & $K$\tabularnewline
\hline 
Rational Mmodel & $S_{1}=Q_{1,\mathbb{C}}\times\mathbb{S}_{\mathbb{C}}^{1}$ & $S_{2}=\mathbb{S}_{\mathbb{C}}^{2}$ & $S_{3}=\mathbb{S}_{\mathbb{C}}^{2}/\mathbb{Z}_{2}$ & $S_{4}=\{xy^{2}=1-z^{2}\}$\tabularnewline
\hline 
Real categorical quotient  & $(Q_{1,\mathbb{C}},\sigma_{Q_{1}})$ & $(\mathbb{A}_{\mathbb{C}}^{1},\sigma_{\mathbb{A}_{\mathbb{R}}^{1}})$ & $(\mathbb{A}_{\mathbb{C}}^{1},\sigma_{\mathbb{A}_{\mathbb{R}}^{1}})$ & $(\mathbb{A}_{\mathbb{C}}^{1},\sigma_{\mathbb{A}_{\mathbb{R}}^{1}})$\tabularnewline
\hline 
Image of real locus  & $S^{1}$ & $[-1,1]$ & $[-1,1]$ & $[-1,1]$\tabularnewline
\hline 
Real DPD-pair $(D,h)$ & $(0,1)$ & $(0,1-z^{2})$ & $(\frac{1}{2}\{-1\},1-z^{2})$ & ${\displaystyle (\frac{1}{2}\{-1\}+\frac{1}{2}\{1\},1-z^{2})}$\tabularnewline
\hline 
\end{tabular}
\par\end{center}

\caption{Rational affine models of compact surfaces with $S^1$-actions. The notation  $(Q_{1,\mathbb{C}},\sigma_{Q_1})$ refers to the underlying real algebraic variety of $\mathbb{S}^1$, that is, the complexification of the smooth affine curve in $\mathbb{A}^2_{\mathbb{R}}=\mathrm{Spec}(\mathbb{R}[u,v])$ with equation $u^2+v^2=1$.}
\label{RatMods}
\end{figure}

The scheme of the proof is the following. In Lemma \ref{lem:Quas-proj-2-affine}
below, we first establish that every smooth rational quasi-projective
model with $\mathbb{S}^{1}$-action of the torus $T$, or the sphere
$S^{2}$, or the plane $\mathbb{RP}^{2}$ or the Klein bottle $K$
is $\mathbb{S}^{1}$-equivariantly birationally diffemorphic to an
affine one. Then in Lemma \ref{lem:CompactLocus-case-disjunction},
we split the study of the affine case into two subcases according
to the nature of the image of the real locus by the real quotient
morphism. These subcases are finally studied separately in subsections
\ref{subsec:First-case} and \ref{subsec:Second-case}. 
\begin{lem}
\label{lem:Quas-proj-2-affine}Let $(X,\Sigma)$ be a smooth rational
real quasi-projective surface with an effective $\mathbb{S}^{1}$-action
and whose real locus is a compact connected manifold of dimension
$2$ without boundary. Then $(X,\Sigma)$ is $\mathbb{S}^{1}$-equivariantly
birationally diffeomorphic to a smooth rational real affine surface
$(S,\sigma)$ with $\mathbb{S}^{1}$-action.
\end{lem}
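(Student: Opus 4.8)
The plan is to reduce to the affine case by completing $(X,\Sigma)$ equivariantly and then deleting an $\mathbb{S}^1$-invariant ample divisor whose real locus is empty. First I would extend the $\mathbb{G}_{m,\mathbb{C}}$-action to an equivariant open embedding of $X$ into a projective $\mathbb{G}_{m,\mathbb{C}}$-surface (equivariant completion), and then apply equivariant resolution of singularities; performing these constructions compatibly with the real structure (either directly over $\mathbb{R}$ or over $\mathbb{C}$ and descending via $\Sigma$) produces a smooth projective real $\mathbb{S}^1$-surface $(\bar X,\bar\Sigma)$ containing $(X,\Sigma)$ as an $\mathbb{S}^1$-invariant real open subset, with boundary $\partial X=\bar X\setminus X$ an $\mathbb{S}^1$-invariant real curve with simple normal crossings.

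The key preliminary observation is that $\partial X$ has empty real locus. Indeed, since $X(\mathbb{C})^{\Sigma}$ is compact without boundary, it is at once open, because $X$ is open in $\bar X$, and closed, by compactness, in the smooth $2$-manifold $\bar X(\mathbb{C})^{\bar\Sigma}$; hence it is a union of connected components of that manifold. Its complement in $\bar X(\mathbb{C})^{\bar\Sigma}$ is $\partial X(\mathbb{C})^{\bar\Sigma}$, the real locus of the curve $\partial X$, which has real dimension at most $1$. Being a union of components of a $2$-manifold, it must therefore be empty, so that $\bar X(\mathbb{C})^{\bar\Sigma}=X(\mathbb{C})^{\Sigma}$ and $\partial X(\mathbb{C})^{\bar\Sigma}=\varnothing$.

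It then suffices to exhibit an $\mathbb{S}^1$-invariant effective ample divisor $A$ on $\bar X$ whose support avoids $\bar X(\mathbb{C})^{\bar\Sigma}$. Granting this, $S:=\bar X\setminus\mathrm{Supp}(A)$ is a smooth real affine surface with $\mathbb{S}^1$-action (the complement of the support of an ample divisor in a projective variety is affine), and $S(\mathbb{C})^{\sigma}=\bar X(\mathbb{C})^{\bar\Sigma}=X(\mathbb{C})^{\Sigma}$. The two open immersions $X\hookrightarrow\bar X\hookleftarrow S$ then restrict to the identity on this common real locus, hence are $\mathbb{S}^1$-equivariant birational diffeomorphisms, and their composition furnishes the desired birational diffeomorphism $X\dashrightarrow S$. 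To construct $A$ I would use the $\mathbb{S}^1$-quotient: after further equivariant blow-ups centered at conjugate pairs of non-real points, which leave both the real locus and the birational-diffeomorphism class unchanged, the rational quotient becomes an $\mathbb{S}^1$-equivariant $\mathbb{P}^1$-fibration $\bar\pi\colon\bar X\to\bar C$ onto a smooth projective real curve $(\bar C,\bar\tau)$. Each pair of conjugate fibers over non-real points of $\bar C$ has empty real locus, and so does any horizontal $\mathbb{S}^1$-invariant multisection contained in $\partial X$; by the Nakai–Moishezon criterion a suitable positive combination of one such horizontal curve with a large multiple of these vertical fibres is ample.

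The main obstacle is precisely this last construction, namely securing a horizontal $\mathbb{S}^1$-invariant curve with empty real locus. Vertical fibres alone are never ample, while the only evident horizontal invariant curves are the source and sink curves of the $\mathbb{G}_{m,\mathbb{C}}$-action, exchanged by $\sigma$; their real points lie on their mutual intersection, i.e.\ at the real $\mathbb{S}^1$-fixed points of $(S,\sigma)$, which belong to $X(\mathbb{C})^{\Sigma}$ (as happens in the sphere and projective-plane models of \S\ref{subsec:Rational-affine-models-Construct}) and hence cannot be deleted. The point to be verified carefully is that one may nonetheless always produce such a curve—using the horizontal components of the boundary $\partial X$, which by the second paragraph automatically have empty real locus, together with equivariant modifications at non-real centres—so that the ample divisor $A$ can be assembled and $S$ obtained.
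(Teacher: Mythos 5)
Your first two paragraphs are sound and coincide with the first half of the paper's proof: equivariant completion via Sumihiro \cite{Su75} together with equivariant desingularization \cite{OrWa71} gives the embedding $(X,\Sigma)\hookrightarrow(\overline{X},\overline{\Sigma})$, and the equality $X(\mathbb{C})^{\Sigma}=\overline{X}(\mathbb{C})^{\overline{\Sigma}}$ holds; your open--closed--dimension argument for this equality is in fact a little more economical than the paper's, which invokes Comessatti's theorem \cite{Co14} at this point. The reduction to producing an effective, real, $\mathbb{S}^{1}$-invariant ample divisor $A$ on $\overline{X}$ whose support misses the real locus is also exactly the paper's strategy. The genuine gap is the step you flag yourself: $A$ is never constructed, and the route you sketch for constructing it cannot work.

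The failure is structural, not a matter of checking details. Once you blow up so that the rational quotient becomes a morphism $\overline{\pi}\colon\overline{X}'\rightarrow\overline{C}$, it can happen that \emph{every} $\mathbb{S}^{1}$-invariant curve on $\overline{X}'$ is vertical, and a vertical divisor is never ample (it has zero intersection with a general fiber). Concretely, take $X=\mathbb{P}_{\mathbb{C}}^{2}$ with the standard real structure and the action $t\cdot[x:y:z]=[tx:t^{-1}y:z]$ in the diagonalizing coordinates $x=u+iv$, $y=u-iv$: the invariant irreducible curves are exactly $\{x=0\}$, $\{y=0\}$, $\{z=0\}$ and the conics $\{xy=\lambda z^{2}\}$, and all of them are contracted by the rational quotient $[x:y:z]\mapsto[xy:z^{2}]$. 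After blowing up the two non-real conjugate base points $[1:0:0]$ and $[0:1:0]$ --- precisely the equivariant modification at non-real centres that your plan prescribes --- every invariant curve of the resulting fibered surface, including the two exceptional curves, lies in a fiber; there are no horizontal invariant curves whatsoever (note also that the boundary $\partial X$ is empty here, and is purely vertical for $Q_{\mathbb{C}}\setminus H$), hence no invariant ample divisor exists on that model and the Nakai--Moishezon assembly has nothing to assemble. The missing idea, which is how the paper fills exactly this gap, is to abandon the fibration and use linearization: by \cite[Theorem 1.6]{Su75} there is a very ample $\mathbb{S}^{1}$-linearized sheaf $\mathcal{L}$ on $(\overline{X},\overline{\Sigma})$; decomposing the representation of $\mathbb{S}^{1}$ on $H^{0}(\overline{X},\mathcal{L})$ into hyperbolic pairs $(x_{i},y_{i})$ of weights $\pm m_{i}$, with real structure $(x_{i},y_{i})\mapsto(\overline{y}_{i},\overline{x}_{i})$, and trivial summands $z_{j}$, the quadric $Q=\{\sum_{i}x_{i}y_{i}+\sum_{j}z_{j}^{2}=0\}$ in $\mathbb{P}(H^{0}(\overline{X},\mathcal{L}))$ is real, $\mathbb{S}^{1}$-invariant and ample, and it has empty real locus because on real points the form restricts to $\sum_{i}|x_{i}|^{2}+\sum_{j}z_{j}^{2}>0$. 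Since $\overline{X}(\mathbb{C})^{\overline{\Sigma}}\neq\emptyset$ forces $\overline{X}\not\subset Q$, the divisor $A=Q\cap\overline{X}$ is the one you need, and $S=\overline{X}\setminus Q$ completes the proof.
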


\begin{proof}
By Sumuhiro equivariant completion theorem \cite{Su75} and equivariant
desingularization results for normal surfaces with $\mathbb{G}_{m,\mathbb{C}}$-actions
\cite{OrWa71}, there exists a smooth real projective surface $(\overline{X},\overline{\Sigma})$
with $\mathbb{S}^{1}$-action and an $\mathbb{S}^{1}$-equivariant
open embedding $(X,\Sigma)\hookrightarrow(\overline{X},\overline{\Sigma})$.
Since $(X,\Sigma)$ is rational, so is $(\overline{X},\overline{\Sigma})$.
By a result of Comessatti \cite[p. 257]{Co14}, the real locus of
$(\overline{X},\overline{\Sigma})$ is a connected compact smooth
surface without boundary, either non-orientable, or orientable and
diffeomorphic to $T$ or $S^{2}$. Since the real locus of $(X,\Sigma)$
is itself connected and compact, it follows that the real loci of
$(X,\Sigma)$ and $(\overline{X},\overline{\Sigma})$ coincide, so
that $(X,\Sigma)\hookrightarrow(\overline{X},\overline{\Sigma})$
is a birational diffeomorphism. By \cite[Theorem 1.6]{Su75}, there
exists a very ample $\mathbb{S}^{1}$-linearized invertible sheaf
$\mathcal{L}$ on $(\overline{X},\overline{\Sigma})$. This yields
in particular a representation of $\mathbb{S}^{1}$ into the group
of linear automorphism of $H^{0}(\overline{X},\mathcal{L})$. The
underlying representation of $\mathbb{G}_{m,\mathbb{C}}$ splits as
a direct sum of $n_{1}\geq1$ non trivial diagonal representations
of the form 
\[
t\cdot(x_{i},y_{i})=(t^{m_{i}}x_{i},t^{-m_{i}}y_{i}),\;m_{i}\in\mathbb{Z}_{>0},\quad i=0,\ldots,n_{1}-1
\]
on which the real structure is given by the composition of the involution
$(x_{i},y_{i})\mapsto(y_{i},x_{i})$ with the complex conjugation,
and $n_{2}\geq0$ trivial $1$-dimensional representations. The $\mathbb{G}_{m,\mathbb{C}}$-equivariant
closed embedding 
\[
\overline{X}\hookrightarrow\mathbb{P}(H^{0}(\overline{X},\mathcal{L}))\cong\mathrm{Proj}(\mathbb{C}[x_{0},y_{0},\ldots x_{n_{1}-1},y_{n_{1}-1},z_{1},\ldots,z_{n_{2}}])
\]
then becomes a real $\mathbb{S}^{1}$-equivariant closed embedding
for the real structure $\overline{\Sigma}$ on $\overline{X}$ and
the real structure on $\mathbb{P}(H^{0}(\overline{X},\mathcal{L}))$
defined as the composition of the involution 
\[
(x_{0},y_{0},\ldots x_{n_{1}-1},y_{n_{1}-1},z_{1},\ldots,z_{n_{2}})\mapsto(y_{0},x_{0},\ldots y_{n_{1}-1},x_{n_{1}-1},z_{1},\ldots,z_{n_{2}})
\]
with the complex conjugation. The quadric $Q\subset\mathbb{P}(H^{0}(\overline{X},\mathcal{L}))$
with equation $\sum_{i=0}^{n_{1}-1}x_{i}y_{i}+\sum_{j=1}^{n_{2}}z_{j}^{2}=0$
is a real ample $\mathbb{S}^{1}$-invariant divisor on $\mathbb{P}(H^{0}(\overline{X},\mathcal{L}))$
with empty real locus. Since the real locus of $(\overline{X},\overline{\Sigma})$
is not empty, $\overline{X}$ is not contained in $Q$. It follows
that $(S,\sigma)=(\overline{X}\setminus Q,\overline{\Sigma}|_{\overline{X}\setminus Q})$
is a smooth rational real affine surface with $\mathbb{S}^{1}$-action.
By construction, the open inclusion $(S,\sigma)\hookrightarrow(\overline{X},\overline{\Sigma})$
is an $\mathbb{S}^{1}$-equivariant birational diffeomorphism. 
\end{proof}
The following lemma divides in turn the study of the affine case into
two sub-cases:
\begin{lem}
\label{lem:CompactLocus-case-disjunction} Let $(S,\sigma)$ be a
smooth rational real affine surface with an effective $\mathbb{S}^{1}$-action
and whose real locus is a connected compact surface without boundary.
Let $\pi:(S,\sigma)\rightarrow(C,\tau)$ be the real quotient morphism
for the $\mathbb{S}^{1}$-action. Then the following alternative holds: 

a) $(C,\tau)$ is a real affine open subset of $(Q_{1,\mathbb{C}},\sigma_{Q_{1}})$
and its real locus is equal to that of $(Q_{1,\mathbb{C}},\sigma_{Q_{1}})$. 

b) $(C,\tau)$ is a real affine open subset of the real affine line
$(\mathbb{A}_{\mathbb{C}}^{1},\sigma_{\mathbb{A}_{\mathbb{R}}^{1}})$
and its real locus is a closed interval. 
\end{lem}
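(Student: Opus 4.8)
The plan is to pin down the real curve $(C,\tau)$ as a punctured real projective line and then to read off the two cases from whether or not a real point is removed, using compactness of the real locus of $S$ to control the image of $\pi$.

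First I would show that $(C,\tau)$ is a smooth rational real affine curve. Since $\pi\colon S\to C$ is dominant and $S$ is rational, the function field $\mathbb{C}(C)$ is a transcendence degree one subfield of the rational field $\mathbb{C}(S)$, so $C$ is unirational, hence rational as it is a curve; thus $C_{\mathbb{C}}$ is an open subset of $\mathbb{P}^1_{\mathbb{C}}$. Because $\pi$ is a real morphism it maps $S(\mathbb{C})^\sigma$ into $C(\mathbb{C})^\tau$ (if $\sigma(p)=p$ then $\tau(\pi(p))=\pi(\sigma(p))=\pi(p)$), and as $S(\mathbb{C})^\sigma$ is nonempty, $(C,\tau)$ has a real point. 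The smooth projective model of $C$ is therefore a smooth real form of $\mathbb{P}^1_{\mathbb{C}}$ carrying a real point, hence is isomorphic to $\mathbb{P}^1_{\mathbb{R}}$, whose real locus is $S^1$. Consequently $(C,\tau)$ is isomorphic to $\mathbb{P}^1_{\mathbb{R}}\setminus F$ for a nonempty finite $\tau$-invariant set $F$ of closed points, and $C(\mathbb{C})^\tau=S^1\setminus F_{\mathbb{R}}$, where $F_{\mathbb{R}}$ is the (possibly empty) set of real points of $F$.

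Next I would analyze the image $J=\pi(S(\mathbb{C})^\sigma)\subseteq C(\mathbb{C})^\tau$. As $\pi$ induces a continuous map on real loci for the Euclidean topology, $J$ is compact and connected. Moreover $J$ cannot be a single point: if $J=\{c\}$, then $S(\mathbb{C})^\sigma$ would lie inside the real locus of the fibre $\pi^{-1}(c)$, which is at most $1$-dimensional, contradicting that $S(\mathbb{C})^\sigma$ is a $2$-dimensional manifold. Hence $J$ is a nondegenerate compact connected subset of the $1$-manifold $C(\mathbb{C})^\tau\subseteq S^1$.

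Finally I would split according to whether $F_{\mathbb{R}}=\emptyset$. If $F$ contains no real point, then $C(\mathbb{C})^\tau=S^1$ equals the real locus of $Q_{1,\mathbb{C}}$; picking a conjugate pair in $F$ and moving it to $\{[1:i],[1:-i]\}$ by a real automorphism of $\mathbb{P}^1_{\mathbb{R}}$ (the group $\mathrm{PGL}_2(\mathbb{R})$ acts transitively on conjugate pairs of non-real points, equivalently on the upper half-plane via $\mathrm{PSL}_2(\mathbb{R})$) identifies $(C,\tau)$ with a real affine open subset of $(Q_{1,\mathbb{C}},\sigma_{Q_1})$ having the same real locus, giving case a). If $F$ contains a real point $p_0$, then sending $p_0$ to infinity by a real automorphism identifies $(C,\tau)$ with a real affine open subset of $(\mathbb{A}^1_{\mathbb{C}},\sigma_{\mathbb{A}^1_{\mathbb{R}}})$; here $C(\mathbb{C})^\tau=S^1\setminus F_{\mathbb{R}}$ is a disjoint union of open arcs, and the connected compact set $J$ lies in a single arc, hence is a genuine closed interval $[a,b]$ with $a<b$, which is the image of the real locus. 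This is case b). The main obstacle I expect is the clean identification of $(C,\tau)$ with an open subset of the two model curves, resting on the classification of real forms of $\mathbb{P}^1_{\mathbb{C}}$ and the transitivity of $\mathrm{PGL}_2(\mathbb{R})$ on real points and on conjugate pairs; once $C$ is realized as $\mathbb{P}^1_{\mathbb{R}}\setminus F$, the topological analysis of $J$ via compactness, connectedness and the fibre-dimension bound is routine.
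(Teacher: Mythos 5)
Your proof is correct and follows essentially the same route as the paper: rationality of $C$ deduced from rationality of $S$, identification of the smooth projective model of $(C,\tau)$ with $(\mathbb{P}^{1}_{\mathbb{C}},\sigma_{\mathbb{P}^{1}_{\mathbb{R}}})$ via the existence of a real point, and a case split according to whether $\mathbb{P}^{1}_{\mathbb{C}}\setminus C$ contains a real point, with the complement of a conjugate pair of non-real points giving $(Q_{1,\mathbb{C}},\sigma_{Q_{1}})$. Your added precisions --- reading the ``closed interval'' as the image $J=\pi(S(\mathbb{C})^{\sigma})$ of the real locus (which is indeed how the paper uses the lemma afterwards, since the real locus of $C$ itself need not be compact) and the fibre-dimension argument showing $J$ is nondegenerate --- are harmless refinements of the same argument.
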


\begin{proof}
The curve $C$ is rational because $S$ is rational. Since the real
locus of $(S,\sigma)$ is nonempty, connected and compact, its image
by $\pi:(S,\sigma)\rightarrow(C,\tau)$ is a nonempty connected compact
subset of the real locus of $(C,\tau)$. The smooth real projective
model of $(C,\tau)$ is thus isomorphic to the real projective line
$(\mathbb{P}_{\mathbb{C}}^{1},\sigma_{\mathbb{P}_{\mathbb{R}}^{1}})$.
If $\mathbb{P}_{\mathbb{C}}^{1}\setminus C$ contains a real point
of $(\mathbb{P}_{\mathbb{C}}^{1},\sigma_{\mathbb{P}_{\mathbb{R}}^{1}})$,
then $C$ is isomorphic to a real affine open subset of the real affine
line. Being connected and compact, its real locus is then a closed
interval. Otherwise, since the inclusion $(C,\tau)\hookrightarrow(\mathbb{P}_{\mathbb{C}}^{1},\sigma_{\mathbb{P}_{\mathbb{R}}^{1}})$
is a real morphism and $C$ is affine, $\mathbb{P}_{\mathbb{C}}^{1}\setminus C$
is not empty and consists of pairs of non-real points of $\mathbb{P}_{\mathbb{C}}^{1}$
which are exchanged by the real structure $\sigma_{\mathbb{P}_{\mathbb{R}}^{1}}$.
Since the complement of a pair of such points $q$ and $\sigma_{\mathbb{P}_{\mathbb{R}}^{1}}(q)$
is isomorphic to the real affine quadric $(Q_{1,\mathbb{C}},\sigma_{Q_{1}})$,
it follows that $(C,\tau)$ is isomorphic to a real affine open subset
of $(Q_{1,\mathbb{C}},\sigma_{Q_{1}})$, and since the real locus
of $\mathbb{P}_{\mathbb{C}}^{1}\setminus C$ is empty, it follows
that the real locus of $(C,\tau)$ is equal to that of $(Q_{1,\mathbb{C}},\sigma_{Q_{1}})$. 
\end{proof}

\subsubsection{\label{subsec:First-case}First case: $(C,\tau)$ is a real affine
open subset of $(Q_{1,\mathbb{C}},\sigma_{Q_{1}})$ with real locus
equal to $S^{1}$. }
\begin{prop}
\label{prop:Torus-diffBir} Let $(S,\sigma)$ be a smooth rational
real affine surface with an effective $\mathbb{S}^{1}$-action and
whose real locus is a connected compact surface without boundary.
Let $\pi:(S,\sigma)\rightarrow(C,\tau)$ be the real quotient morphism
and assume that $(C,\tau)$ is a real affine open subset of $(Q_{1,\mathbb{C}},\sigma_{Q_{1}})$
whose real locus is equal to that of $(Q_{1,\mathbb{C}},\sigma_{Q_{1}})$.
Then $(S,\sigma)$ is $\mathbb{S}^{1}$-equivariantly birationally
diffeomorphic to $(Q_{1,\mathbb{C}},\sigma_{Q_{1}})\times\mathbb{S}^{1}$
on which $\mathbb{S}^{1}$ acts by translations on the second factor. 
\end{prop}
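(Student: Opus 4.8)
The plan is to show that the stated hypotheses force the real $\mathbb{S}^1$-action to have only principal real orbits, so that over the real locus $\pi$ becomes a principal circle bundle over the circle $S^1$, which is necessarily trivial. First I would restrict the real quotient morphism to real loci, obtaining a proper continuous map $\pi_{\mathbb{R}}\colon S(\mathbb{C})^{\sigma}\to C(\mathbb{C})^{\tau}=S^{1}$ whose target is, by hypothesis, the \emph{full} circle. Since by Theorem \ref{prop:Real-Except-Orb-1} every fibre of $\pi$ over a real point of $(C,\tau)$ consists of a single $\mathbb{S}^{1}$-orbit, the orbit space of the induced $S^{1}$-action on $S(\mathbb{C})^{\sigma}$ is identified with the image $\pi_{\mathbb{R}}(S(\mathbb{C})^{\sigma})\subseteq S^{1}$, a nonempty compact connected subset, hence either all of $S^{1}$ or a proper closed arc. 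At a point lying on a principal real orbit the map $\pi_{\mathbb{R}}$ is a submersion, so its image is open there; the arc alternative can therefore only arise from the presence of a fibre of type b) or c) in Theorem \ref{prop:Real-Except-Orb-1} over a real point.

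The decisive step is thus to rule out such exceptional real fibres, and this is where I expect the main obstacle to lie. A type c) fibre contributes an isolated $\mathbb{S}^{1}$-fixed point and a type b) fibre an orbit isomorphic to $\mathbb{S}^{1}$ with stabiliser $\mu_{2}$ whose invariant tubular neighbourhood in $S(\mathbb{C})^{\sigma}$ is a M\"obius band; in either case the corresponding orbit is a boundary point of the orbit space, and by the last assertion of Theorem \ref{prop:Real-Except-Orb-1} the restriction of $\pi$ over a punctured real neighbourhood is a \emph{nontrivial} $\mathbb{S}^{1}$-torsor, so the fibres on one side become $\hat{\mathbb{S}}^{1}$ and acquire empty real locus (Example \ref{exa:S1-torsor-pt}). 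I would argue that this is incompatible with $S(\mathbb{C})^{\sigma}$ being a compact surface \emph{without boundary} fibred over the whole of $S^{1}$: the delicate point is to show that no such boundary orbit can occur, so that the orbit space has empty boundary and therefore equals $S^{1}$. Granting this, the $S^{1}$-action is free, $S(\mathbb{C})^{\sigma}$ is a principal $S^{1}$-bundle over $S^{1}$, and such bundles are classified by $H^{2}(S^{1};\mathbb{Z})=0$; hence $S(\mathbb{C})^{\sigma}$ is a torus and $\pi_{\mathbb{R}}$ is a trivial circle bundle.

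Finally, I would translate this into the algebraic statement. Having excluded exceptional real fibres, the morphism $\pi$ is an $\mathbb{S}^{1}$-torsor over a $\tau$-invariant affine open subset $C'\subseteq C$ whose real locus is all of $S^{1}$; by Lemma \ref{lem:S1-torsors} it is classified by a pair $(\mathcal{L},h)$, and since $C'$ is an open subset of $Q_{1,\mathbb{C}}\cong\mathbb{G}_{m,\mathbb{C}}$ its Picard group vanishes, so $\mathcal{L}\cong\mathcal{O}_{C'}$ and the torsor is determined by a nonvanishing real function $h$ with $\mathrm{div}(h)=0$. Because the real locus is nonempty over $S^{1}$, $h$ is positive there, so by Example \ref{exa:S1-torsor-pt} the torsor is the trivial one; using the equivalence of Theorem \ref{thm:MainThm} 1) I would then normalise $(D,h)$ to the trivial DPD-pair $(0,1)$ of the torus model of $\S$ \ref{subsec:Torus-Model}. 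This yields an $\mathbb{S}^{1}$-equivariant isomorphism over $C'$ between $(\pi^{-1}(C'),\sigma|_{\pi^{-1}(C')})$ and the restriction of $(Q_{1,\mathbb{C}},\sigma_{Q_{1}})\times\mathbb{S}^{1}$; since $C'$ and $Q_{1,\mathbb{C}}$ share the real locus $S^{1}$ and the complement lies over non-real points, this isomorphism restricts to an equivariant diffeomorphism of real loci and extends to the desired $\mathbb{S}^{1}$-equivariant birational diffeomorphism with $(Q_{1,\mathbb{C}},\sigma_{Q_{1}})\times\mathbb{S}^{1}$.
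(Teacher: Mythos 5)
Your reduction of the problem to excluding fibres of type b) and c) over real points correctly identifies where the difficulty lies, but the argument you sketch for that ``decisive step'' is fallacious, and the gap cannot be closed in the form you propose. You argue that such a fibre would be ``incompatible with $S(\mathbb{C})^{\sigma}$ being a compact surface without boundary fibred over the whole of $S^{1}$.'' This conflates the boundary of the \emph{orbit space} with a boundary of the \emph{manifold}: an exceptional real orbit produces a boundary point of the orbit space, never of the surface (the poles of $S^{2}$ under rotation and the core circle of a M\"obius band are interior points), and the phrase ``fibred over the whole of $S^{1}$'' presupposes exactly what you must prove, namely that the image of $\pi_{\mathbb{R}}$ is all of $S^{1}$ rather than a proper arc. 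Worse, the hypotheses as stated genuinely do not exclude types b) and c): take the regular real DPD-pair $(D,h)=(0,u)$ on $(Q_{1,\mathbb{C}},\sigma_{Q_{1}})$ itself, whose associated surface is $S=\{xy=u\}\subset Q_{1,\mathbb{C}}\times\mathbb{A}_{\mathbb{C}}^{2}$ with real structure $(u,v,x,y)\mapsto(\bar{u},\bar{v},\bar{y},\bar{x})$ and action $t\cdot(x,y)=(tx,t^{-1}y)$. It is smooth and rational, its real quotient is $(Q_{1,\mathbb{C}},\sigma_{Q_{1}})$ (so we are in the situation of the Proposition), and its real locus $\{u^{2}+v^{2}=1,\ y=\bar{x},\ |x|^{2}=u\}$ is a $2$-sphere: compact, connected, without boundary, with two fibres of type c) over $(0,\pm1)$ and with image of the real locus equal to the arc $\{u\geq0\}$ of $S^{1}$. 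So the step you flag as delicate is not merely unproven in your write-up; no topological argument from ``compact without boundary'' can supply it.

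What actually drives the paper's proof is the stronger property that the image of the real locus of $(S,\sigma)$ is the \emph{whole} real locus of $(Q_{1,\mathbb{C}},\sigma_{Q_{1}})$: this is hypothesis i) of Lemma \ref{lem:DPD-pair-for-torus}, and once it is granted, types b) and c) are excluded by precisely the sign-change argument you have in mind ($\mathrm{ord}_{c}(h')=1$ forces $h'<0$, hence empty real fibres isomorphic to $\hat{\mathbb{S}}^{1}$, on one side of $c$). The paper's proof of the Proposition asserts this surjectivity for the extended pair $(\tilde{D},\tilde{h})$ without deriving it from the hypotheses, so your instinct that this is the crux is accurate; but your proposal does not close it either, and the example above shows it cannot be closed without adding the surjectivity as an assumption. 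Two further remarks on your final paragraph, which would matter even after that repair: first, Example \ref{exa:S1-torsor-pt} classifies $\mathbb{S}^{1}$-torsors over a \emph{point}, so positivity of the unit $h$ on $S^{1}$ does not by itself trivialize the torsor over your open set $C'$; the paper instead extends the pair to all of $Q_{1,\mathbb{C}}$, where a nowhere-vanishing real regular function is forced to be a positive constant $\lambda=\alpha\tau^{*}\alpha$, hence a norm, so that Theorem \ref{thm:MainThm} 1) normalizes the pair to $(0,1)$. Second, triviality of the topological circle bundle over $S^{1}$ gives a diffeomorphism of real loci, but the Proposition requires an $\mathbb{S}^{1}$-equivariant \emph{birational} diffeomorphism, which only the algebraic normalization of the DPD-pair provides.
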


\begin{proof}
Let $(D,h)$ be a regular real DPD-pair on $(C,\tau)$ corresponding
to $(S,\sigma)$. Since $h$ is a $\tau$-invariant rational function
on $C$, it is also a $\sigma_{Q_{1}}$-invariant rational function
on $Q_{1,\mathbb{C}}$. We claim that by changing $h$ for some rational
function of the form $f\tau^{*}fh$, where $f$ is a rational function
on $Q_{1,\mathbb{C}}$, and changing $D$ accordingly by $D+\mathrm{div}(f|_{C})$,
we can assume that $h$ is the restriction to $(C,\tau)$ of a real
regular function on $(Q_{1,\mathbb{C}},\sigma_{Q_{1}})$ whose zero
locus on $Q_{1,\mathbb{C}}$ consists of real points only. Indeed,
since $h$ is $\sigma_{Q_{1}}$-invariant, its poles on $Q_{1}$ are
either real points of $(Q_{1,\mathbb{C}},\sigma_{Q_{1}})$ or pairs
of non-real points exchanged by $\sigma_{Q_{1}}$. So up to changing
$h$ for $f\tau^{*}fh$ and $D$ for $D+\mathrm{div}(f|_{C})$ for
a suitable regular function $f$ on $Q_{1,\mathbb{C}}$, we can assume
from the very beginning that $h$ is the restriction of a real regular
function on $(Q_{1,\mathbb{C}},\sigma_{Q_{1}})$. Let $q=(z_{1},z_{2})$
and $\sigma_{Q_{1}}(q)=(\overline{z}_{1},\overline{z}_{2})=\overline{q}$
be a pair of non-real points of $Q_{1,\mathbb{C}}$ at which $h$
vanishes. The restrictions to $Q_{1,\mathbb{C}}$ of the regular functions
\[
F_{q}=(v-z_{2})-i(u-z_{1})\quad\textrm{and}\quad F_{\overline{q}}=(v-\overline{z}_{2})+i(u-\overline{z}_{1})
\]
on $\mathbb{A}_{\mathbb{C}}^{2}=\mathrm{Spec}(\mathbb{C}[u,v])$ are
regular functions $f_{q}$ and $f_{\overline{q}}$ on $Q_{1,\mathbb{C}}$
such that $\mathrm{div}(f_{q})=q$ and $\mathrm{div}(f_{\overline{q}})=\overline{q}$.
Furthermore, since $\sigma_{Q_{1}}^{*}f_{q}=f_{\overline{q}}$ it
follows that for $\delta=\mathrm{ord}_{q}(h)=\mathrm{ord}_{\overline{q}}(h)$,
$f_{q}^{-\delta}\sigma_{Q_{1}}^{*}f_{q}^{-\delta}h$ is a real regular
function on $(Q_{1,\mathbb{C}},\sigma_{Q_{1}})$ which does not vanish
at $q$ and $\overline{q}$. The pair 
\[
(D',h')=(D-\delta\mathrm{div}(f_{q}|_{C}),f_{q}^{-\delta}\sigma_{Q_{1}}^{*}f_{q}^{-\delta}h)
\]
is then a regular real DPD-pair on $(C,\tau)$ which defines a smooth
real affine surface $\mathbb{S}^{1}$-equivariantly isomorphic to
$(S,\sigma)$ by Theorem \ref{thm:MainThm} 1). The desired regular
real DPD-pair is then obtained by applying this construction to the
finitely many pairs of non-real points of $(Q_{1,\mathbb{C}},\sigma_{Q_{1}})$
exchanged by $\sigma_{Q_{1}}$ at which $h$ vanishes. 

The set of non-real points $q$ of $(C,\tau)$ such that either $D(q)\neq0$
or $D(\tau(q))\neq0$ is a finite real subset $Z$ of $(C,\tau)$.
Its complement $(U,\tau|_{U})$ is a real affine open subset of $(C,\tau)$
and the restriction $(D|_{U},h|_{U})$ of $(D,h)$ is a regular real
DPD-pair defining a smooth real affine surface $\mathbb{S}^{1}$-equivariantly
isomorphic to $(\pi^{-1}(U),\sigma|_{\pi^{-1}(U)})$. Since $Z$ consists
of non-real point of $(C,\tau)$ only, the inclusion of $(\pi^{-1}(U),\sigma|_{\pi^{-1}(U)})$
in $(S,\sigma)$ is an $\mathbb{S}^{1}$-equivariant birational diffeomorphism.
Replacing $(C,\tau)$ and $(D,h)$ by $(U,\tau|_{U})$ and $(D|_{U},h|_{U})$,
we can therefore assume that $D(q)=\mathrm{ord}_{q}(h)=0$ for every
non-real point $q$ of $C$. Now let $\tilde{D}$ be the Weil $\mathbb{Q}$-divisor
on $Q_{1,\mathbb{C}}$ defined by $\tilde{D}(c)=D(c)$ if $c\in C$
and $\tilde{D}(c)=0$ otherwise, and let $\tilde{h}=h$. Since $(C,\tau)\subset(Q_{1,\mathbb{C}},\sigma_{Q_{1}})$
is a real affine open subset with the same real locus as $(Q_{1,\mathbb{C}},\sigma_{Q_{1}})$,
$Q_{1,\mathbb{C}}\setminus C$ consists of finitely many pairs $\{q,\sigma_{Q_{1}}(q)\}$
of non-real points of $Q_{1,\mathbb{C}}$ exchanged by the real structure
$\sigma_{Q_{1}}$. For every such pair of points, we have by construction
\[
\mathrm{ord}_{q}(\tilde{h})=\mathrm{ord}_{q}(h)=\tilde{D}(q)+\tilde{D}(\sigma_{Q_{1}}(q))=\tilde{D}(q)+\sigma_{Q_{1}}^{*}(\tilde{D})(q)
\]
and similarly for $\sigma_{Q_{1}}(q)$. This implies that $(\tilde{D},\tilde{h})$
is a real DPD-pair on $(Q_{1,\mathbb{C}},\sigma_{Q_{1}})$ which is
regular since $(D,h)$ is regular. Let $(\tilde{S},\tilde{\sigma})$
be the corresponding smooth real affine surface with $\mathbb{S}^{1}$-action
and let $\tilde{\pi}:(\tilde{S},\tilde{\sigma})\rightarrow(Q_{1,\mathbb{C}},\sigma_{Q_{1}})$
be its real quotient morphism. We then have a cartesian square of
real algebraic varieties \[\xymatrix{(S,\sigma) \ar[r]^{\varphi} \ar[d]_{\pi} & (\tilde{S},\tilde{\sigma}) \ar[d]^{\tilde{\pi}} \\ (C,\tau) \ar[r] & (Q_{1,\mathbb{C}},\sigma_{Q_1})} \]
in which the top horizontal morphism $\varphi$ is an $\mathbb{S}^{1}$-equivariant
open embedding of $(S,\sigma)$ as the complement of the fibers of
$\tilde{\pi}$ over the points of $Q_{1,\mathbb{C}}\setminus C$.
Since $Q_{1,\mathbb{C}}\setminus C$ consists of pairs of non-real
points of $Q_{1,\mathbb{C}}$, the real loci of $(S,\sigma)$ and
$(\tilde{S},\tilde{\sigma})$ coincide, which implies that $\varphi:(S,\sigma)\rightarrow(\tilde{S},\tilde{\sigma})$
is a birational diffeomorphism.

By construction, $(\tilde{D},\tilde{h})$ is a regular real DPD-pair
on $(Q_{1,\mathbb{C}},\sigma_{Q_{1}})$ such that the support of $\tilde{D}$
consists of real points of $(Q_{1,\mathbb{C}},\sigma_{Q_{1}})$ and
such that $\tilde{h}$ is a regular function whose zero locus consists
of real points only. Furthermore, the image by $\tilde{\pi}:(\tilde{S},\tilde{\sigma})\rightarrow(Q_{1,\mathbb{C}},\sigma_{Q_{1}})$
of the real locus of $(\tilde{S},\tilde{\sigma})$ is equal to that
of $(Q_{1,\mathbb{C}},\sigma_{Q_{1}})$. By Lemma \ref{lem:DPD-pair-for-torus}
below, $(\tilde{S},\tilde{\sigma})$ is $\mathbb{S}^{1}$-equivariantly
isomorphic to $(Q_{1,\mathbb{C}},\sigma_{Q_{1}})\times\mathbb{S}^{1}$
on which $\mathbb{S}^{1}$ acts by translations on the second factor.
This completes the proof. 
\end{proof}
In the proof of Proposition \ref{prop:Torus-diffBir} above, we use
the following auxiliary characterization of $(Q_{1,\mathbb{C}},\sigma_{Q_{1}})\times\mathbb{S}^{1}$
up to $\mathbb{S}^{1}$-equivariant real isomorphisms: 
\begin{lem}
\label{lem:DPD-pair-for-torus}Let $(D,h)$ be a regular real DPD-pair
on $(Q_{1,\mathbb{C}},\sigma_{Q_{1}})$ such that the support of $D$
consists of real points of $(Q_{1,\mathbb{C}},\sigma_{Q_{1}})$ and
such that $h$ is a real regular function whose zero locus consists
of real points of $(Q_{1,\mathbb{C}},\sigma_{Q_{1}})$ only. Let $(S,\sigma)$
be the corresponding smooth real affine surface with $\mathbb{S}^{1}$-action
and let $\pi:(S,\sigma)\rightarrow(Q_{1,\mathbb{C}},\sigma_{Q_{1}})$
be its real quotient morphism. Then the following are equivalent:

i) The image by $\pi$ of the real locus of $(S,\sigma)$ is equal
to the real locus of $(Q_{1,\mathbb{C}},\sigma_{Q_{1}})$.

ii) The surface $(S,\sigma)$ is $\mathbb{S}^{1}$-equivariantly isomorphic
to $(Q_{1,\mathbb{C}},\sigma_{Q_{1}})\times\mathbb{S}^{1}$ on which
$\mathbb{S}^{1}$ acts by translations on the second factor. 
\end{lem}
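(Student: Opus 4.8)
The plan is to prove the two implications separately, the forward one being essentially formal and the reverse one requiring the fiberwise analysis of Theorem~\ref{prop:Real-Except-Orb-1}. The implication ii)~$\Rightarrow$~i) is immediate: if $(S,\sigma)\cong(Q_{1,\mathbb{C}},\sigma_{Q_1})\times\mathbb{S}^1$ with $\mathbb{S}^1$ acting by translation on the second factor, then $\pi$ is the first projection, its restriction to the real locus $S^1\times S^1$ of the product is the projection onto the real locus $S^1$ of $(Q_{1,\mathbb{C}},\sigma_{Q_1})$, which is surjective. Equivalently, the corresponding real DPD-pair is $(0,1)$, so every fiber over a real point is a trivial torsor $\mathbb{S}^1$ by Theorem~\ref{prop:Real-Except-Orb-1}~a) and hence carries a real point.

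For i)~$\Rightarrow$~ii), the first step is to show that i) forces $h$ to be nonnegative on the real locus $S^1$. At a real point $c$ with $D(c)=0$ and $h(c)\neq 0$ one is in case a) of Theorem~\ref{prop:Real-Except-Orb-1}, and by Example~\ref{exa:S1-torsor-pt} the fiber $(\pi^{-1}(c),\sigma|_{\pi^{-1}(c)})$ contains a real point if and only if $h(c)>0$; if $h(c)<0$ the fiber is the pointless torsor $\hat{\mathbb{S}}^1$ and $c$ lies outside the image of the real locus, contradicting i). Such points $c$ are cofinite in $S^1$, hence dense, so continuity yields $h\geq 0$ on $S^1$. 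A local parity argument then eliminates cases b) and c): writing $h=t^{k}\cdot(\mathrm{unit})$ in a real local parameter $t$ at a real point $c$, nonnegativity of $h$ on $S^1$ forces $k=\mathrm{ord}_c(h)$ to be even, whereas cases b) and c) require $\mathrm{ord}_c(h)$ odd. Thus every real point lies in case a), so $D(c)\in\mathbb{Z}$ and $\mathrm{ord}_c(h)=2D(c)$ for all real $c$; since $D$ is supported on real points and the zeros of $h$ are real, this gives the global identity $\mathrm{div}(h)=2D$ with $D$ an integral (Cartier) divisor.

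To finish I would trivialize the pair via Theorem~\ref{thm:MainThm}~1) with $\psi=\mathrm{id}$. The key structural fact is that over $\mathbb{C}$ the curve $(Q_{1,\mathbb{C}},\sigma_{Q_1})$ is the torus $\mathbb{G}_{m,\mathbb{C}}=\mathrm{Spec}(\mathbb{C}[p^{\pm1}])$ with $p=u+iv$, on which $\sigma_{Q_1}$ acts by $p\mapsto p^{-1}$ and whose Picard group is trivial. Triviality of $\mathrm{Pic}$ produces a rational function $f_0$ with $\mathrm{div}(f_0)=-D$; then $w:=h\,f_0\,\tau^*f_0$ has trivial divisor, hence is a unit, is real, and is positive on $S^1$ because there $h\geq 0$ and $f_0\,\tau^*f_0=|f_0|^2$. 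The remaining point is a norm computation: the real units of $\mathbb{G}_{m,\mathbb{C}}$ are exactly the real constants, and the norms $g\,\tau^*g=|g|^2$ exhaust $\mathbb{R}_{>0}$, so $w=\lambda$ for some $\lambda>0$ and $f:=\lambda^{1/2}f_0^{-1}$ satisfies $\mathrm{div}(f)=D$ and $f\,\tau^*f=h$. By Theorem~\ref{thm:MainThm}~1) this exhibits $(D,h)$ as equivalent to $(0,1)$, whose associated surface is the trivial torsor $(Q_{1,\mathbb{C}},\sigma_{Q_1})\times\mathbb{S}^1$, giving ii).

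The main obstacle is the passage from the real-locus condition i) to the algebraic identity $\mathrm{div}(h)=2D$: one must combine the fiberwise dichotomy $\mathbb{S}^1$ versus $\hat{\mathbb{S}}^1$ in case a) with the parity constraint that simultaneously excludes the exceptional fibers of cases b) and c). Once this bookkeeping is settled, the triviality of $\mathrm{Pic}(Q_{1,\mathbb{C}})$ together with the surjectivity of the norm map on units makes the reduction to $(0,1)$ routine.
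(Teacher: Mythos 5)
Your proof is correct and follows essentially the same route as the paper's: both hinge on the fiber dichotomy behind Theorem \ref{prop:Real-Except-Orb-1} (over a real point of type a) with $h<0$ the fiber is the pointless torsor $\hat{\mathbb{S}}^{1}$), both exclude cases b) and c) by a sign-change/parity argument on $h|_{S^{1}}$, and both conclude via the equivalence of real DPD-pairs in Theorem \ref{thm:MainThm} 1). The only difference is organizational: the paper first normalizes $(D,h)$ by a local reduction as in Lemma \ref{lem:Local-Reduction} and then finds that the resulting $h'$ is a positive real constant, whereas you keep the original pair, derive the global identity $\mathrm{div}(h)=2D$, and trivialize it using $\mathrm{Pic}(Q_{1,\mathbb{C}})=0$ together with the facts that real units on $(Q_{1,\mathbb{C}},\sigma_{Q_{1}})$ are real constants and that norms $g\tau^{*}g$ exhaust $\mathbb{R}_{>0}$ --- the same underlying facts the paper uses implicitly when it asserts $h'$ is constant.
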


\begin{proof}
The implication ii)$\Rightarrow$i) is clear. We now proceed to the
proof of i)$\Rightarrow$ii). For every real point $c=(c_{1},c_{2})$
of $(Q_{1,\mathbb{C}},\sigma_{Q_{1}})$, the restrictions to $Q_{1,\mathbb{C}}$
of the regular functions 
\[
F_{c}=(v-c_{2})-i(u-c_{1})\quad\textrm{and}\quad\overline{F}_{c}=(v-c_{2})+i(u-c_{2})
\]
on $\mathbb{A}_{\mathbb{C}}^{2}=\mathrm{Spec}(\mathbb{C}[u,v])$ are
regular functions $f_{c}$ and $\overline{f}_{c}$ on $Q_{1,\mathbb{C}}$
such that $\mathrm{div}(f_{c})=\mathrm{div}(\overline{f}_{c})=c$.
Furthermore, we have $\sigma_{Q_{1}}^{*}f_{c}=\overline{f}_{c}$ so
that $\mathrm{div}(f_{c}\sigma_{Q_{1}}^{*}f_{c})=2c$. Arguing as
in the proof of Lemma \ref{lem:Local-Reduction}, we obtain that $(S,\sigma)$
is $\mathbb{S}^{1}$-equivariantly isomorphic to the surface determined
by a regular real DPD-pair $(D',h')$ on $(Q_{1,\mathbb{C}},\sigma_{Q_{1}})$
such that $\mathrm{Supp}(D')$ is contained in the real locus of $(Q_{1,\mathbb{C}},\sigma_{Q_{1}})$,
$h'$ is regular and vanishes at real points only, and such that for
every real point $c$ of $(Q_{1,\mathbb{C}},\sigma_{Q_{1}})$ exactly
one of the following possibilities occurs:

a) $D'(c)=0$ and $\mathrm{ord}_{c}(h')=0$ 

b) $D'(c)=\frac{1}{2}$ and $\mathrm{ord}_{c}(h')=1$ 

c) $D'(c)=0$ and $\mathrm{ord}_{c}(h')=1$. 

Consider the restriction $h'|_{S^{1}}:S^{1}\rightarrow\mathbb{R}$
of the real regular function $h'$ to the real locus $S^{1}$ of $(Q_{1,\mathbb{C}},\sigma_{Q_{1}})$.
If $c_{0}$ is a real point of $(Q_{1,\mathbb{C}},\sigma_{Q_{1}})$
of type b) or c) then $h'|_{S^{1}}$ is a continuous function on $S^{1}$
whose sign changes at $c_{0}$. It follows that there exists a real
point $c$ of $(Q_{1,\mathbb{C}},\sigma_{Q_{1}})$ such that $D'(c)=0$
and $h'(c)<0$. But then, it follows from the proof of Theorem \ref{prop:Real-Except-Orb-1}
1) that $(\pi^{-1}(c),\sigma|_{\pi^{-1}(c)})$ is $\mathbb{S}^{1}$-equivariantly
isomorphic to the nontrivial $\mathbb{S}^{1}$-torsor $\hat{\mathbb{S}}^{1}$
which has empty real locus. This is impossible since by hypothesis
the real locus of $(S,\sigma)$ surjects onto that of $(Q_{1,\mathbb{C}},\sigma_{Q_{1}})$.
Thus $D'(c)=\mathrm{ord}_{c}(h')=0$ for every real point $c$ of
$(Q_{1,\mathbb{C}},\sigma_{Q_{1}})$. This implies in turn $D'$ has
empty support and that $h'$ is a nowhere vanishing real regular function
on $(Q_{1,\mathbb{C}},\sigma_{Q_{1}})$. It follows that $h'$ is
constant, with positive value $\lambda\in\mathbb{R}_{+}^{*}$ at every
point since the real locus of $(S,\sigma)$ surjects onto that of
$(Q_{1,\mathbb{C}},\sigma_{Q_{1}})$. Writing $\lambda=\alpha\tau^{*}\alpha=\alpha^{2}$
for some real number $\alpha$, we deduce from Theorem \ref{thm:MainThm}
1) that the surface $(S,\sigma)$ is $\mathbb{S}^{1}$-equivariantly
isomorphic to that defined by the real DPD-pair $(D',1)=(0,1)$ on
$(Q_{1,\mathbb{C}},\sigma_{Q,1})$. By subsection \ref{subsec:Torus-Model},
the latter is $\mathbb{S}^{1}$-equivariantly isomorphic to $(Q_{1,\mathbb{C}},\sigma_{Q_{1}})\times\mathbb{S}^{1}$
on which $\mathbb{S}^{1}$ acts by translations on the second factor. 
\end{proof}

\subsubsection{\label{subsec:Second-case}Second case: $(C,\tau)$ is a real affine
open subset of the real affine line }
\begin{prop}
Let $(S,\sigma)$ be a smooth rational real affine surface with an
effective $\mathbb{S}^{1}$-action and whose real locus is a connected
compact surface without boundary. Let $\pi:(S,\sigma)\rightarrow(C,\tau)$
be its real quotient morphism and assume that $(C,\tau)$ is a real
affine open subset of $(\mathbb{A}_{\mathbb{C}}^{1},\sigma_{\mathbb{A}_{\mathbb{R}}^{1}})$.
Then $(S,\sigma)$ is $\mathbb{S}^{1}$-equivariantly birationally
diffeomorphic to one of the affine surfaces $(S_{2},\sigma_{2})$,
$(S_{3},\sigma_{3})$ and $(S_{4},\sigma_{4})$ in Table \ref{RatMods}.
\end{prop}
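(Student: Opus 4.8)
The plan is to bring the regular real DPD-pair $(D,h)$ attached to $(S,\sigma)$ into one of the three normal forms $(D_{i},h_{i})$, $i=2,3,4$, of Table \ref{RatMods}, exploiting the fact that an $\mathbb{S}^{1}$-equivariant birational diffeomorphism is free to modify $(S,\sigma)$ over the loci of $C$ lying away from the image of the real locus. By Lemma \ref{lem:CompactLocus-case-disjunction} b), $(C,\tau)$ is a real affine open subset of $(\mathbb{A}_{\mathbb{C}}^{1},\sigma_{\mathbb{A}_{\mathbb{R}}^{1}})$ and the image $\pi(S(\mathbb{C})^{\sigma})$ of the real locus of $(S,\sigma)$ is a closed interval $[a,b]$ contained in the real locus $\mathbb{R}$ of $(C,\tau)$.

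First I would normalize $(D,h)$ as in the proof of Proposition \ref{prop:Torus-diffBir} together with Lemma \ref{lem:Local-Reduction}. Clearing the zeros and poles of $h$ at non-real points by $f\tau^{*}f$-modifications, deleting the finitely many non-real points in the support of $D$ (an $\mathbb{S}^{1}$-equivariant birational diffeomorphism, since these points are non-real), and applying the local reduction at each real point, I may assume that $\mathrm{Supp}(D)$ consists of real points, that $D(c)\in[0,1)$ for every real point $c$, and that $h$ has neither zeros nor poles at non-real points. Since $(D,h)$ is a real DPD-pair, the inequalities $0\le 2D(c)\le\mathrm{ord}_{c}(h)$ at real points show moreover that $h$ has no pole at any real point of $C$.

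Next I would determine the real fibres using Theorem \ref{prop:Real-Except-Orb-1}. A real point $c\in C$ lying outside $[a,b]$ has a fibre with empty real locus; since the fibres of types b) and c) always contain a real point, such a fibre is of type a) isomorphic to $\hat{\mathbb{S}}^{1}$, whence $D(c)=0$, $\mathrm{ord}_{c}(h)=0$ and $h(c)<0$. For $c\in(a,b)$ the fibre is nonempty; were it of type b) or c) the function $h$ would change sign at $c$, producing nearby fibres isomorphic to $\hat{\mathbb{S}}^{1}$ with empty real locus and contradicting the inclusion $(a,b)\subset\pi(S(\mathbb{C})^{\sigma})$. Hence every interior fibre is of type a), so that $D(c)=0$, $\mathrm{ord}_{c}(h)=0$ and $h(c)>0$. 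Consequently $h$ is positive on $(a,b)$ and negative on both tails, so it changes sign at $a$ and $b$; the orders $\mathrm{ord}_{a}(h)$, $\mathrm{ord}_{b}(h)$ are therefore odd, and regularity forces them to equal $1$, so that the two endpoints are of type b) or c) with $D(a),D(b)\in\{0,\tfrac12\}$. Thus $D$ is supported on $\{a,b\}$ and the only zeros of $h$ on $C$ are the simple zeros at $a$ and $b$.

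It remains to extend the pair to $(\mathbb{A}_{\mathbb{C}}^{1},\sigma_{\mathbb{A}_{\mathbb{R}}^{1}})$ and to recognise the model. A removed real point $c_{0}\in\mathbb{A}_{\mathbb{C}}^{1}\setminus C$ lies outside $[a,b]$, where $h<0$ on both adjacent sides; hence $\mathrm{ord}_{c_{0}}(h)$ is even, and since $z-c_{0}$ is a real function invertible on $C$, the $f\tau^{*}f$-modification with $f=(z-c_{0})^{-\mathrm{ord}_{c_{0}}(h)/2}$ reduces to $\mathrm{ord}_{c_{0}}(h)=0$ and $h(c_{0})<0$ while leaving $D|_{C}$ unchanged, so $c_{0}$ may be reinstated as an $\hat{\mathbb{S}}^{1}$-fibre by an $\mathbb{S}^{1}$-equivariant birational diffeomorphism. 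After treating all such points and extending trivially across the remaining non-real removed points as in Proposition \ref{prop:Torus-diffBir}, I obtain a regular real DPD-pair on $(\mathbb{A}_{\mathbb{C}}^{1},\sigma_{\mathbb{A}_{\mathbb{R}}^{1}})$ with $D=D(a)\{a\}+D(b)\{b\}$ and $h=c(z-a)(z-b)$ for some $c<0$. An affine coordinate change sending $[a,b]$ to $[-1,1]$, a scaling of $h$ by a positive constant regarded as a norm $f\tau^{*}f$ with $f$ constant, and, if needed, the involution $z\mapsto-z$, then bring the pair to $\bigl(D(a)\{-1\}+D(b)\{1\},\,1-z^{2}\bigr)$, which is exactly $(D_{2},h_{2})$, $(D_{3},h_{3})$ or $(D_{4},h_{4})$ according as $(D(a),D(b))$ equals $(0,0)$, $(\tfrac12,0)$ or $(\tfrac12,\tfrac12)$. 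By Theorem \ref{thm:MainThm} 1) the resulting surface is $\mathbb{S}^{1}$-equivariantly isomorphic to $(S_{2},\sigma_{2})$, $(S_{3},\sigma_{3})$ or $(S_{4},\sigma_{4})$, and composing the intermediate maps yields the desired equivariant birational diffeomorphism. I expect the main obstacle to be precisely this last step: over the real points outside $[a,b]$ and over the removed points of $C$ one must interleave the isomorphism-level moves of Theorem \ref{thm:MainThm} 1) with the genuinely birational-diffeomorphic moves of adding and deleting fibres of empty real locus, and check, via the sign of $h$ on the tails, that every obstruction to reaching the clean model on $\mathbb{A}_{\mathbb{C}}^{1}$ is confined to such negligible loci.
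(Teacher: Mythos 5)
Your proposal is correct and follows essentially the same route as the paper's proof: normalize the regular real DPD-pair using the moves of Theorem \ref{thm:MainThm} 1) and the local reduction of Lemma \ref{lem:Local-Reduction}, use the fiber trichotomy of Theorem \ref{prop:Real-Except-Orb-1} together with the sign of $h$ to show $D$ is supported on the endpoints with coefficients in $\{0,\tfrac12\}$ and $h$ has simple zeros exactly there, extend over $\mathbb{A}_{\mathbb{C}}^{1}$ by adding only fibers with empty real locus, and reduce to the three normal forms of Table \ref{RatMods}. The only difference is one of ordering: you run the type/sign analysis on $C$ first and use it to justify the evenness of $\mathrm{ord}_{c_{0}}(h)$ at removed real points before extending, a detail the paper compresses into the assertion that $h$ may be taken to be a real polynomial with zero locus in $J$ ``by choosing $f\in\mathbb{C}(z)$ suitably.''
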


\begin{proof}
Let $\mathbb{A}_{\mathbb{C}}^{1}=\mathrm{Spec}(\mathbb{C}[z])$ and
let $(D,h)$ be a regular real DPD-pair on $(C,\tau)\subseteq(\mathbb{A}_{\mathbb{C}}^{1},\sigma_{\mathbb{A}_{\mathbb{R}}^{1}})$
corresponding to $(S,\sigma)$. The image by $\pi$ of the real locus
of $(S,\sigma)$ is closed interval $J$ of the real locus $\mathbb{R}$
of $(\mathbb{A}_{\mathbb{C}}^{1},\sigma_{\mathbb{A}_{\mathbb{R}}^{1}})$.
By Theorem \ref{thm:MainThm} 1), $(S,\sigma)$ is $\mathbb{S}^{1}$-equivariantly
isomorphic to the surface determined by a real DPD-pair of the form
\[
(D+\mathrm{div}(f|_{C}),(f\tau^{*}f)|_{C}h)
\]
on $(C,\tau)$, where $f$ is any element of $\mathbb{C}(z)$. By
choosing $f\in\mathbb{C}(z)$ suitably, we can assume from the very
beginning that $h\in\mathbb{C}(z)$ is a real polynomial whose zero
locus on $\mathbb{A}_{\mathbb{C}}^{1}$ is contained in $J$. Then
arguing as in the proof of Proposition \ref{prop:Torus-diffBir},
we get that $(S,\sigma)$ is $\mathbb{S}^{1}$-equivariantly birationally
diffeomorphic to the smooth real affine surface $(\pi^{-1}(U),\sigma|_{\pi^{-1}(U)})$
determined by the real DPD pair $(D|_{U},h|_{U})$ on the open complement
$(U,\tau|_{U})$ in $C$ of the set of non-real points $q$ of $(C,\tau)$
such that either $D(q)\neq0$ or $D(\tau(q))\neq0$. Then $(\pi^{-1}(U),\sigma|_{\pi^{-1}(U)})$
is in turn $\mathbb{S}^{1}$-equivariantly birationally diffeomorphic
to the smooth real affine surface $(\tilde{S},\tilde{\sigma})$ determined
by the regular real DPD-pair $(\tilde{D},\tilde{h})=(\tilde{D},h)$
on $(\mathbb{A}_{\mathbb{C}}^{1},\sigma_{\mathbb{A}_{\mathbb{R}}^{1}})$,
where $\tilde{D}$ is the Weil $\mathbb{Q}$-divisor defined by $\tilde{D}(c)=D(c)$
if $c\in J$ and $\tilde{D}(c)=0$ otherwise. 

By composing the real quotient morphism $\tilde{\pi}:(\tilde{S},\tilde{\sigma})\rightarrow(\mathbb{A}_{\mathbb{C}}^{1},\sigma_{\mathbb{A}_{\mathbb{R}}^{1}})$
by a real automorphism of $(\mathbb{A}_{\mathbb{C}}^{1},\sigma_{\mathbb{A}_{\mathbb{R}}^{1}})$,
we can assume without loss of generality that $J$ is equal to the
interval $[-1,1]$. For every real point $c$ of $J$, $f_{c}=z-c$
is a real regular function $f_{c}$ on $(\mathbb{A}_{\mathbb{C}}^{1},\sigma_{\mathbb{A}_{\mathbb{R}}^{1}})$
such that $c=\mathrm{div}(f_{c})$. Arguing as in the proof of Lemma
\ref{lem:Local-Reduction}, we obtain that $(\tilde{S},\tilde{\sigma})$
is $\mathbb{S}^{1}$-equivariantly isomorphic to the surface $(\hat{S},\hat{\sigma})$
determined by a real DPD-pair $(\hat{D},\hat{h})$ on $(\mathbb{A}_{\mathbb{C}}^{1},\sigma_{\mathbb{A}_{\mathbb{R}}^{1}})$
such that $\mathrm{Supp}(\hat{D})$ is contained in $J$, $\hat{h}$
is a regular function whose zero locus is contained in $J$ and such
that that for every $c\in J$ exactly one of the following possibilities
occurs:

a) $\hat{D}(c)=0$ and $\mathrm{ord}_{c}(\hat{h})=0$ 

b) $\hat{D}(c)=\frac{1}{2}$ and $\mathrm{ord}_{c}(\hat{h})=1$ 

c) $\hat{D}(c)=0$ and $\mathrm{ord}_{c}(\hat{h})=1$. 

By composing the real  quotient morphism $\hat{\pi}:(\hat{S},\hat{\sigma})\rightarrow(\mathbb{A}_{\mathbb{C}}^{1},\sigma_{\mathbb{A}_{\mathbb{R}}^{1}})$
by the real automorphism $z\mapsto-z$ of $(\mathbb{A}_{\mathbb{C}}^{1},\sigma_{\mathbb{A}_{\mathbb{R}}^{1}})$,
we can further assume without loss of generality that $\hat{D}(-1)\geq\hat{D}(1)$.
By Theorem \ref{prop:Real-Except-Orb-1}, for a real point $c$ of
$(\mathbb{A}_{\mathbb{C}}^{1},\sigma_{\mathbb{A}_{\mathbb{R}}^{1}})$,
the real locus of $(\hat{\pi}^{-1}(c),\hat{\sigma}|_{\hat{\pi}^{-1}(c)})$
is empty if and only if $\hat{D}(c)=\mathrm{ord}_{c}(\hat{h})=0$
and $\hat{h}(c)<0$. It follows that $\hat{D}(c)=\mathrm{ord}_{c}(\hat{h})=0$
and $\hat{h}(c)<0$ for every real point $c$ of $(\mathbb{A}_{\mathbb{C}}^{1},\sigma_{\mathbb{A}_{\mathbb{R}}^{1}})$
outside of $J$. On the other hand, the real locus of $(\hat{\pi}^{-1}(c),\hat{\sigma}|_{\hat{\pi}^{-1}(c)})$
being nonempty for every real point $c\in J$ by assumption, we have
$\hat{h}(c)\geq0$ for every $c\in J$. It follows that $\hat{h}\in\mathbb{R}[z]\subset\mathbb{C}[z]$
is a nonzero real polynomial with only simple real roots, whose restriction
to the real locus $\mathbb{R}$ of $(\mathbb{A}_{\mathbb{C}}^{1},\sigma_{\mathbb{A}_{\mathbb{R}}^{1}})$
is negative outside $J$ and non-negative on $J$. This implies that
$\hat{h}=\lambda(1-z^{2})$ for some $\lambda\in\mathbb{R}_{+}^{*}$
which can be further chosen equal to $1$ by Theorem \ref{thm:MainThm}
1). It follows in turn that $\hat{D}(c)=0$ for every real point $c$
of $(\mathbb{A}_{\mathbb{C}}^{1},\sigma_{\mathbb{A}_{\mathbb{R}}^{1}})$
other than $-1$ and $1$, and since $\mathrm{ord}_{\pm1}\hat{h}=1$
and $\hat{D}(-1)\geq\hat{D}(1)$, the only remaining possibilities
are the following:

(i) $\hat{D}(-1)=\hat{D}(1)=0$

(ii) $\hat{D}(-1)=\frac{1}{2}$ and $\hat{D}(1)=0$ 

(iii) $\hat{D}(-1)=\hat{D}(1)=\frac{1}{2}.$

These pairs $(\hat{D},\hat{h})$ correspond respectively to the model
$(S_{2},\sigma_{2})$ of $S^{2}$, $(S_{3},\sigma_{3})$ of $\mathbb{RP}^{2}$
and $(S_{4},\sigma_{4})$ of $K$ in Table \ref{RatMods}. This completes
the proof.  
\end{proof}
\bibliographystyle{amsplain}

\end{document}